\newcommand{\R}{\mathbb{R}}
\newtheorem{theo}{Theorem}[section]
\newtheorem{lemma}[theo]{Lemma}
\newtheorem{prop}[theo]{Proposition}
\theoremstyle{definition}
\newtheorem{remark}[theo]{Remark}
\newcommand{\mF}{{\mathfrak F}}
\newcommand{\cI}{{\mathcal I}}
\newcommand{\cM}{{\mathcal M}}
\newcommand{\sign}{\operatorname{sgn}}
\newcommand{\eps}{\varepsilon}
\newcommand{\N}{{\mathbb{N}}}
\renewcommand{\epsilon}{\varepsilon}
\newcommand{\Sn}{{\mathbb S}^{N-1}}
\renewcommand{\leq}{\leqslant}
\renewcommand{\geq}{\geqslant}
\numberwithin{equation}{section}
\def\sideremark#1{\ifvmode\leavevmode\fi\vadjust{\vbox to0pt{\vss
 \hbox to 0pt{\hskip\hsize\hskip1em
 \vbox{\hsize2.1cm\tiny\raggedright\pretolerance10000
  \noindent #1\hfill}\hss}\vbox to15pt{\vfil}\vss}}}%
\title{On the least-energy solutions of the pure Neumann Lane-Emden equation}
\author{Alberto Salda\~{n}a,\footnote{Instituto de Matem\'aticas, Universidad Aut\'onoma de M\'exico, Circuito Exterior, Ciudad Universitaria, 04510 Coyoac\'an, Ciudad de M\'exico, M\'exico. \textit{Email address:}\ \texttt{alberto.saldana@im.unam.mx}}\ \ \& Hugo Tavares\footnote{CAMGSD and Mathematics Department, Instituto Superior T\'ecnico, Universidade de Lisboa, Av.\  Rovisco Pais, 1049-001 Lisboa, Portugal. \textit{Email address:}\ \texttt{hugo.n.tavares@tecnico.ulisboa.pt}}}
\date{}
\begin{document}

\maketitle

\begin{abstract}
We study the pure Neumann Lane-Emden problem in a bounded domain
\[
 -\Delta u = |u|^{p-1} u  \text{ in }\Omega, \qquad 
\partial_\nu u=0  \text{ on }\partial \Omega,
\]
in the subcritical, critical, and supercritical regimes. We show existence and convergence of least-energy (nodal) solutions (l.e.n.s.). In particular, we prove that l.e.n.s. converge to a l.e.n.s. of a problem with sign nonlinearity as $p\searrow 0$; to a l.e.n.s. of the critical problem as $p\nearrow 2^*$ (in particular, pure Neumann problems exhibit no blowup phenomena at the critical Sobolev exponent $2^*$); and we show that the limit as $p\to 1$ depends on the domain.  Our proofs rely on different variational characterizations of solutions including a dual approach and a nonlinear eigenvalue problem.  Finally, we also provide a qualitative analysis of l.e.n.s., including symmetry, symmetry-breaking, and monotonicity results for radial solutions.

\medbreak

\noindent{\bf 2020 MSC} 35J20, 35B40 (Primary); 35B07, 35B33, 35B38, 35P30.

\noindent{\bf Keywords:} Pure Neumann problems, asymptotic analysis,
dual method,
foliated Schwarz symmetry,
symmetry breaking,
\end{abstract}

\section{Introduction}

Let $\Omega\subset \R^{N}$, $N\geq 4$, be a bounded smooth domain and let $p\in [0,2^*-1]\backslash\{1\}$, where $2^*=\frac{2N}{N-2}$ is the Sobolev critical exponent. Consider the pure Neumann problem
\begin{equation}\label{eq:LENSNeumann}
 -\Delta u_p = |u_p|^{p-1} u_p  \text{ in }\Omega, \qquad 
\partial_\nu u_p=0  \text{ on }\partial \Omega,
\end{equation}
where, if $p=0$, equation~\eqref{eq:LENSNeumann} is understood as
\begin{align}\label{P0:intro}
 -\Delta u_0 = \sign(u_0) \ \text{ in }\Omega, \qquad 
\partial_\nu u_0=0  \text{ on }\partial \Omega,
\end{align}
and
\begin{align*}
 \sign(u)
 =\begin{cases}
      1, &\text{ if }u>0,\\
      -1, &\text{ if }u<0,\\
      0, &\text{ if }u=0.
  \end{cases}
\end{align*}

A least-energy solution of~\eqref{eq:LENSNeumann} is defined as a solution achieving the level
\begin{align}\label{c:eps}
L_{p}:=\inf \{I_p(w):\ w\in H^1(\Omega)\setminus \{0\} \text{ is a weak solution of~\eqref{eq:LENSNeumann}}\},
\end{align}
where $I_p:H^1(\Omega)\to \R$ is defined by
\begin{align}\label{I:def}
 I_p(u)=\frac{1}{2}\int_\Omega |\nabla u|^2- \frac{1}{{p}+1}\int_\Omega |u|^{{p}+1}.
\end{align}
Observe that a nontrivial classical solution $u_p$ of \eqref{eq:LENSNeumann} is always sign-changing, since the Neumann b.c. and the divergence theorem imply that
\begin{align}\label{comp}
\int_\Omega |u_p|^{p-1}u_p=0.
\end{align}
Then, $L_p$ is actually a least-energy \emph{nodal} level. 

The nature of the problem \eqref{eq:LENSNeumann} changes substantially in the following six cases: the sign nonlinearity $p=0,$ the sublinear regime $p\in(0,1)$, the eigenvalue problem $p=1$, the superlinear-subcritical regime $p\in(1,2^*-1)$, the critical exponent $p=2^*-1$, and the supercritical regime $p>2^*-1$.  For $p\leq 2^*-1$,  the existence of the corresponding least-energy nodal solutions (l.e.n.s.) relies on different variational methods and characterizations depending on the regime, each one facing different obstacles such as lack of compactness, of differentiability, concentration phenomena, scaling invariance, compatibility conditions, etc., see Proposition \ref{thm:varcha} below and references \cite{PW15,ST17,CK90,CK91,SW03}. Clearly, in the linear case $p=1$, solutions of \eqref{eq:LENSNeumann} exist if and only if 1 is an eigenvalue of the Neumann Laplacian, and $L_1=0$ in such a case.

\medskip

In this paper, we are interested in the relationship between these methods and in the convergence and qualitative properties of l.e.n.s. $u_p$ of \eqref{eq:LENSNeumann} for $p\in[0,2^*-1]$.  To connect these different approaches, we use some auxiliary problems that are easier to study when varying the exponent $p$.  In particular, our next result introduces a nonlinear eigenvalue problem which captures the essence of l.e.n.s. for $p>0$.  We fix first some notation.  We use $\|\cdot\|_t=\|\cdot\|_{L^t(\Omega)}$ to denote the standard $L^t$-norm. Let $\mu_1=\mu_1(\Omega)>0$ denote the first nonzero eigenvalue of the Neumann Laplacian and let $\psi_{1}\in C^\infty(\Omega)$ denote an $L^2$-normalized eigenfunction associated to $\mu_1$ with zero-average, namely a solution of
\begin{align}\label{varphi:intro}
-\Delta \psi_{1} = \mu_1\psi_{1}\ \text{ in }\Omega, \qquad
\partial_\nu \psi_{1}=0\ \text{ on }\partial \Omega,\qquad
\int_\Omega \psi_{1}=0,\quad 
\text{ and } \quad \|\psi_{1}\|_2=1.
\end{align}

For $p\in (0,2^*-1]$, define
\begin{align}\label{Lambdap:def}
\Lambda_p:= \inf \left\{ \int_\Omega |\nabla u|^2:\ u\in H^1(\Omega),\ \|u\|_{p+1}=1,\ \int_\Omega |u|^{p-1}u=0\right\}.
\end{align}
Observe that $\Lambda_1=\mu_1$ and that the  condition $\int_\Omega |u|^{p-1}u=0$ (a ``nonlinear average'' for $p\neq 1$) avoids the translation invariance typical in Neumann problems.

\begin{theo}\label{th:NeumannLENS_0}
Let $\Omega \subset \R^N$, $N\geq 4$, be a smooth bounded domain. Then $\Lambda_p$ is achieved for every  $p\in(0,\frac{N+2}{N-2}]$, every minimizer $v_p$ satisfies
\[
-\Delta v_p=\Lambda_p |v_p|^{p-1}v_p \text{ in } \Omega,\quad \partial_\nu v_p=0 \text{ on } \partial \Omega,
\]
and the map
\[
\left(0,\frac{N+2}{N-2}\right]\to \R^+;\qquad p\mapsto \Lambda_p
\]
is continuous.  Let $p_n,p \subset (0,\frac{N+2}{N-2}]$ be such that $p_n\to p$ as $n\to\infty$ and let $v_{p_n}$ be a minimizer for $\Lambda_{p_n}$. Then, up to a subsequence, there exists a minimizer $v_p$ of $\Lambda_p$ such that
\[
v_{p_n}\to v_{p} \qquad \text{ in }  C^{2,\alpha}(\overline \Omega)\ \forall \alpha\in (0,1) \quad \text{ as }n\to \infty.
\]
Moreover, 
\[
  \left(
  \frac{\Lambda_1}{\Lambda_{p_n}}
  \right)^\frac{1}{1-p_n}
  =e^{-\frac{1}{2}\int_\Omega
 v_{1}^2\ln(v_{1}^2)+o(1)} \qquad \text{ as } p_n\to 1.
 \]
 \end{theo}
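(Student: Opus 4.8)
The plan is to establish the four assertions — existence of minimizers, the Euler–Lagrange equation, continuity of $p\mapsto\Lambda_p$, compactness of minimizers, and the asymptotic formula near $p=1$ — in that order, using the fact that for $p\le\frac{N+2}{N-2}$ the exponent $p+1$ is subcritical or critical for $H^1(\Omega)$, so the Sobolev embedding $H^1(\Omega)\embed L^{p+1}(\Omega)$ is compact on the relevant range and continuous at the endpoint. Let me address existence first. Fix $p\in(0,\frac{N+2}{N-2}]$ and take a minimizing sequence $(u_n)$ for $\Lambda_p$ with $\|u_n\|_{p+1}=1$ and $\int_\Omega|u_n|^{p-1}u_n=0$. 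The Poincaré–Wirtinger inequality does not directly apply since the constraint is a nonlinear average, so first I would control $|\int_\Omega u_n|$: writing $u_n=\bar u_n+w_n$ with $\bar u_n$ the usual (linear) average, $\|\nabla u_n\|_2=\|\nabla w_n\|_2$ is bounded, hence $w_n$ is bounded in $H^1$, and $u_n$ bounded in $L^{p+1}$ forces $\bar u_n$ bounded; so $(u_n)$ is bounded in $H^1(\Omega)$. Pass to a weakly convergent subsequence $u_n\weak v$ in $H^1$, with strong convergence in $L^{p+1}$ (compact embedding if $p<\frac{N+2}{N-2}$; for the critical case one must argue that the constraint set is weakly closed and use that minimizing the Dirichlet energy subject to $\|u\|_{2^*}=1$ with a vanishing-mean-type constraint still admits a minimizer — here the key point is that $\Lambda_{2^*-1}$ for the Neumann problem, unlike the Dirichlet case, satisfies a strict subadditivity / energy comparison that rules out concentration; this is presumably the content of Proposition~\ref{thm:varcha} and the claimed no-blowup phenomenon, which I would invoke). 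Strong $L^{p+1}$ convergence gives $\|v\|_{p+1}=1$ and $\int_\Omega|v|^{p-1}v=0$, so $v$ is admissible; weak lower semicontinuity of $\|\nabla\cdot\|_2^2$ gives $\|\nabla v\|_2^2\le\Lambda_p$, hence equality, so $v$ is a minimizer. The Euler–Lagrange equation follows by a Lagrange-multiplier computation with the two constraints $g_1(u)=\|u\|_{p+1}^{p+1}-1$ and $g_2(u)=\int_\Omega|u|^{p-1}u$; one checks that the multiplier of $g_2$ vanishes by testing with the constant function $1$ (using $\partial_\nu v=0$ and the divergence theorem), leaving exactly $-\Delta v_p=\Lambda_p|v_p|^{p-1}v_p$ after rescaling. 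Elliptic regularity (and a bootstrap, subcritical or critical Brezis–Kato at $p=2^*-1$) gives $v_p\in C^{2,\alpha}(\overline\Omega)$.

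For continuity of $p\mapsto\Lambda_p$ and the compactness statement I would treat them together. Let $p_n\to p$. Upper semicontinuity: fix a minimizer $v_p$ for $\Lambda_p$; then $t_n v_p$ suitably rescaled to satisfy $\|t_nv_p\|_{p_n+1}=1$ is \emph{not} admissible since $\int_\Omega|v_p|^{p_n-1}v_p$ need not vanish, so I would instead use a perturbation: replace $v_p$ by $v_p-c_n$ where the constant $c_n$ is chosen by the implicit function theorem so that $\int_\Omega|v_p-c_n|^{p_n-1}(v_p-c_n)=0$ (this is solvable since the map $c\mapsto\int_\Omega|v_p-c|^{p_n-1}(v_p-c)$ is strictly decreasing and changes sign, and $c_n\to0$ as $p_n\to p$ by dominated convergence), then normalize in $L^{p_n+1}$; the Dirichlet energy is unchanged by subtracting a constant, so $\limsup_n\Lambda_{p_n}\le\Lambda_p$. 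Lower semicontinuity together with compactness: take minimizers $v_{p_n}$; by the same boundedness argument as above (uniform in $n$ since $p_n$ ranges in a compact subinterval bounded away from $0$ — note one needs $p$ bounded below to keep constants in Poincaré-type estimates uniform, and the endpoint $p=\frac{N+2}{N-2}$ is handled by the no-concentration fact), $(v_{p_n})$ is bounded in $H^1$, hence converges (subsequence) weakly in $H^1$ and strongly in $L^q$ for all $q<2^*$; the exponents $p_n+1\to p+1$ and the constraints pass to the limit, the limit $v$ is admissible for $\Lambda_p$, and $\|\nabla v\|_2^2\le\liminf\|\nabla v_{p_n}\|_2^2=\liminf\Lambda_{p_n}$, which combined with upper semicontinuity forces $\Lambda_{p_n}\to\Lambda_p$ and $v$ a minimizer with $v_{p_n}\to v$ strongly in $H^1$. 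The $C^{2,\alpha}$ convergence then comes from writing the equations for $v_{p_n}$ and $v$, subtracting, and using uniform $L^\infty$ bounds (from the $C^{2,\alpha}$ a priori estimate applied to each, uniformly in $n$) plus Schauder estimates; the nonlinearities $|v_{p_n}|^{p_n-1}v_{p_n}\to|v|^{p-1}v$ uniformly on $\overline\Omega$ since the $v_{p_n}$ are uniformly bounded and $x\mapsto|x|^{p_n-1}x$ converges uniformly on bounded sets.

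For the final asymptotic formula as $p_n\to1$, set $p=1$ in the above so that $v_{p_n}\to v_1$ in $C^{2,\alpha}(\overline\Omega)$, where $v_1$ is an $L^2$-normalized minimizer for $\Lambda_1=\mu_1$, i.e. $v_1=\pm\psi_1$ (up to sign; note $\|v_1\|_2=1$ follows from normalizing $\|v_1\|_{p+1}=1$ at $p=1$). I would compute $\left(\Lambda_1/\Lambda_{p_n}\right)^{1/(1-p_n)}=\exp\!\big(\frac{\ln\Lambda_1-\ln\Lambda_{p_n}}{1-p_n}\big)$ and show the exponent converges to $-\frac12\int_\Omega v_1^2\ln(v_1^2)$. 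The natural route is to differentiate the map $p\mapsto\ln\Lambda_p$ at $p=1$: using the minimizer $v_p$ as a (nearly) optimal competitor and the envelope theorem, $\frac{d}{dp}\ln\Lambda_p\big|_{p=1}$ should equal the partial derivative in $p$ of the Rayleigh-type quotient $\ln\|\nabla u\|_2^2-\ln(\text{normalization})$ evaluated at $u=v_1$, holding $u$ fixed; the only $p$-dependence at fixed $u$ enters through the constraint $\|u\|_{p+1}=1$ and the constraint $\int|u|^{p-1}u=0$. Parametrizing admissible competitors near $p=1$ by $u=v_1-c(p)$ normalized, with $c(1)=0$, and differentiating $1=\|v_1-c(p)\|_{p+1}^{p+1}$ together with $0=\int_\Omega|v_1-c(p)|^{p-1}(v_1-c(p))$ in $p$, one extracts $c'(1)$ and the first-order change of $\|\nabla(v_1-c(p))\|_2^2=\mu_1$ (constant! since subtracting a constant does not change the gradient, and $\|v_1\|_2=1$ is preserved to first order) — so in fact the gradient term contributes nothing and the entire derivative comes from the logarithm of the normalization factor, giving $\frac{d}{dp}\ln\Lambda_p\big|_{p=1}=\frac{d}{dp}\big(\ln\|\nabla v_1\|_2^2-\ln\|v_1\|_{p+1}^2\big)\big|_{p=1}=-\frac{d}{dp}\ln\|v_1\|_{p+1}^2\big|_{p=1}$. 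A direct computation $\frac{d}{dp}\|v_1\|_{p+1}^2\big|_{p=1}=\frac{d}{dp}\big(\int_\Omega|v_1|^{p+1}\big)^{2/(p+1)}\big|_{p=1}$, using $\int_\Omega v_1^2=1$, yields $\int_\Omega v_1^2\ln|v_1|-\int_\Omega v_1^2 = \frac12\int_\Omega v_1^2\ln(v_1^2)-1$; tracking the constant-shift correction and the sign carefully produces exactly $-\frac12\int_\Omega v_1^2\ln(v_1^2)$ for $\frac{d}{dp}\ln\Lambda_p|_{p=1}$, whence $\frac{\ln\Lambda_1-\ln\Lambda_{p_n}}{1-p_n}\to-\frac12\int_\Omega v_1^2\ln(v_1^2)$ and the formula follows by exponentiating. \textbf{The main obstacle} I anticipate is making the "envelope/differentiation" step rigorous: $\Lambda_p$ is an infimum, not obviously differentiable in $p$, so one must sandwich it between two one-sided estimates — using $v_{p_n}$ shifted-and-renormalized as a competitor for $\Lambda_1$ (gives one inequality) and $v_1$ shifted-and-renormalized as a competitor for $\Lambda_{p_n}$ (gives the other) — and then show both quotients $\frac{\ln\Lambda_1-\ln\Lambda_{p_n}}{1-p_n}$ converge to the same limit. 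This requires the $C^{2,\alpha}$ (in particular uniform $L^\infty$ and uniform positivity-of-the-measure-of-$\{v_1\neq0\}$) convergence $v_{p_n}\to v_1$ to pass the logarithmic integrals $\int|v_{p_n}|^{p_n+1}\ln|v_{p_n}|$ to the limit $\int v_1^2\ln|v_1|$ by dominated convergence — the integrand has an integrable singularity where $v_1$ vanishes, so one needs the $v_{p_n}$ to be uniformly bounded and the convergence to be strong enough (uniform) that $\big||v_{p_n}|^{p_n+1}\ln|v_{p_n}|\big|$ is dominated, which the $C^{2,\alpha}(\overline\Omega)$ convergence supplies.
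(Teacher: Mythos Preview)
Your approach has a genuine gap at the critical exponent $p=2^*-1$, and this is precisely where the paper takes a fundamentally different route. You write that at $p=2^*-1$ ``one must argue that the constraint set is weakly closed and use that minimizing the Dirichlet energy subject to $\|u\|_{2^*}=1$ with a vanishing-mean-type constraint still admits a minimizer --- here the key point is \ldots\ presumably the content of Proposition~\ref{thm:varcha} \ldots\ which I would invoke.'' But Proposition~\ref{thm:varcha} is proved \emph{using} Theorem~\ref{th:NeumannLENS_0} (via Proposition~\ref{prop:rel_Lp_Dp}), so this is circular. More substantively, the paper explicitly remarks (Remark~\ref{GW:rmk}) that the direct approach you sketch --- working with the nonlinear constraint $\int_\Omega|u|^{2^*-2}u=0$ head-on --- does not appear to extend from the linear-constraint setting of \cite{GiraoWeth}, because the nonlinearity of the constraint spoils the test-function estimates needed to exclude concentration. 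The paper's solution is to pass to the \emph{dual} problem $D_p=\sup\{\int_\Omega fKf: \int_\Omega f=0,\ \|f\|_{(p+1)/p}=1\}$, where $K$ is the inverse zero-average Neumann Laplacian. The crucial gain is that the constraint $\int_\Omega f=0$ is now \emph{linear}, so one can run Ekeland's variational principle, Brezis--Lieb, and Cherrier's inequality on a maximizing sequence for $D_{2^*-1}$ and obtain strong $L^{2N/(N+2)}$ convergence; this is exactly where $N\ge 4$ enters, via the strict inequality $2^{2/N}/(S\,D_p)<1$ of \cite{CK91}. One then shows $\Lambda_p=1/D_p$ --- directly in the subcritical range, and at the endpoint by continuity of $D_p$ --- and all continuity and compactness statements for $\Lambda_p$ are inherited from those for $D_p$.

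Two smaller points. First, for $p\in(0,1)$ the constraint functional $u\mapsto\int_\Omega|u|^{p-1}u$ is not $C^1$ on $H^1(\Omega)$, so your Lagrange-multiplier derivation of the Euler--Lagrange equation breaks down; the paper treats this range separately (Lemma~\ref{lemma:Lambdap_achieved}) via a hands-on perturbation argument using the shift map $c_p$ from \cite{PW15}. Second, your envelope-theorem route to the asymptotic formula near $p=1$ is different from the paper's and, as you note, delicate to make rigorous since $\Lambda_p$ is an infimum. The paper instead tests the equation $-\Delta v_{p_n}=\Lambda_{p_n}|v_{p_n}|^{p_n-1}v_{p_n}$ against $\psi_1$ to get $\int_\Omega v_{p_n}\psi_1\big(\tfrac{\Lambda_{p_n}}{\mu_1}|v_{p_n}|^{p_n-1}-1\big)=0$, then rewrites the bracket via $a^{p-1}-1=(p-1)\int_0^1 a^{s(p-1)}\ln a\,ds$ to isolate $\tfrac{1}{p_n-1}\ln(\Lambda_{p_n}/\mu_1)$ explicitly --- no differentiation of $p\mapsto\Lambda_p$ is needed, only the already-established $C^{2,\alpha}$ convergence $v_{p_n}\to\psi_1$.
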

 That $\Lambda_p$ is achieved is particularly delicate for $p=2^*-1$.  This can be deduced using the approach in \cite{CK91}, which only holds for $N\geq 4$ due to a compactness issue (see estimate~\eqref{nlt4} below). For $N=3$ it is still an open question whether the critical problem~\eqref{eq:LENSNeumann} admits nontrivial solutions in a general domain $\Omega$ (see \cite[p. 1135]{CK91}).  
 When $\Omega$ is symmetric with respect to a plane, \eqref{eq:LENSNeumann} admits at least one nontrivial solution, while if $\Omega$ is a ball it admits infinitely many solutions \cite{CK90}. However, even in these cases, the existence of least-energy solutions is not known. This is the reason for the restriction $N\geq 4$ in the statement of Theorem~\ref{th:NeumannLENS_0}.

The relation between $\Lambda_p$ and $L_p$ is clarified in the following result. 
 \begin{prop}\label{prop:rel_Lp_Dp}
 Let $N\geq 4$. For $p\in(0,\frac{N+2}{N-2}]\setminus \{1\}$, we have
 \begin{align}\label{eq:rel_Lp_Dp}
 L_p=\frac{{p}-1}{2({p}+1)}\Lambda_p^{\frac{{p}+1}{{p}-1}}.
\end{align} 
Moreover, $v_p$ achieves $\Lambda_p$ if, and only if, $u_p=\Lambda_p^{\frac{1}{p-1}}v_p$ is a least energy solution of \eqref{eq:LENSNeumann}. In particular, the set of least-energy solutions of~\eqref{eq:LENSNeumann} is nonempty for every $p\in(0,\frac{N+2}{N-2}]\backslash \{1\}$.
\end{prop}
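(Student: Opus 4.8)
The plan is to exploit the scaling structure that turns any weak solution of \eqref{eq:LENSNeumann} into a feasible competitor for \eqref{Lambdap:def}, and vice versa, using the fact that $\Lambda_p$ is achieved (Theorem \ref{th:NeumannLENS_0}). First I would record two elementary observations. If $w\in H^1(\Omega)\setminus\{0\}$ is a weak solution of \eqref{eq:LENSNeumann}, then testing the equation against $w$ gives $\int_\Omega|\nabla w|^2=\int_\Omega|w|^{p+1}=:A$, and the compatibility identity \eqref{comp} holds, so $\|w\|_{p+1}=A^{1/(p+1)}>0$. Setting $u=w/\|w\|_{p+1}$ we get $\|u\|_{p+1}=1$ and $\int_\Omega|u|^{p-1}u=0$, hence $\Lambda_p\le\int_\Omega|\nabla u|^2=A/A^{2/(p+1)}=A^{(p-1)/(p+1)}$. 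Moreover a direct computation gives $I_p(w)=\big(\tfrac12-\tfrac1{p+1}\big)A=\tfrac{p-1}{2(p+1)}A$. Conversely, if $v$ achieves $\Lambda_p$ then, by the Lagrange-multiplier computation already contained in Theorem \ref{th:NeumannLENS_0}, $v$ solves $-\Delta v=\Lambda_p|v|^{p-1}v$ in $\Omega$ with $\partial_\nu v=0$; the scaling $u:=\Lambda_p^{1/(p-1)}v$ then solves \eqref{eq:LENSNeumann} exactly, and testing against $u$ yields $\int_\Omega|u|^{p+1}=\int_\Omega|\nabla u|^2=\Lambda_p^{2/(p-1)}\int_\Omega|\nabla v|^2=\Lambda_p^{2/(p-1)}\Lambda_p=\Lambda_p^{(p+1)/(p-1)}=:A_0$, so $I_p(u)=\tfrac{p-1}{2(p+1)}\Lambda_p^{(p+1)/(p-1)}$.

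Next I would combine these to prove the identity \eqref{eq:rel_Lp_Dp}. The second observation shows the set of weak solutions of \eqref{eq:LENSNeumann} is nonempty (it contains $u=\Lambda_p^{1/(p-1)}v_p$) and exhibits one solution with energy $\tfrac{p-1}{2(p+1)}\Lambda_p^{(p+1)/(p-1)}$, hence $L_p\le\tfrac{p-1}{2(p+1)}\Lambda_p^{(p+1)/(p-1)}$ when $p>1$, and $L_p\ge\tfrac{p-1}{2(p+1)}\Lambda_p^{(p+1)/(p-1)}$ when $p<1$ (the inequality flips because the prefactor $\tfrac{p-1}{2(p+1)}$ is negative for $p<1$). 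For the reverse inequality, let $w$ be any weak solution of \eqref{eq:LENSNeumann} with $A=\int_\Omega|\nabla w|^2=\int_\Omega|w|^{p+1}$; the first observation gives $\Lambda_p\le A^{(p-1)/(p+1)}$, i.e. $A\ge\Lambda_p^{(p+1)/(p-1)}$ when $p>1$ and $A\le\Lambda_p^{(p+1)/(p-1)}$ when $p<1$. In either case, multiplying by the (sign-appropriate) constant $\tfrac{p-1}{2(p+1)}$ gives $I_p(w)=\tfrac{p-1}{2(p+1)}A\ge\tfrac{p-1}{2(p+1)}\Lambda_p^{(p+1)/(p-1)}$ for $p>1$ and the same inequality for $p<1$ (signs flip twice). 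Taking the infimum over all weak solutions $w$ yields the matching bound, and \eqref{eq:rel_Lp_Dp} follows.

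Finally I would establish the correspondence of minimizers. One direction is the scaling computation above: if $v_p$ achieves $\Lambda_p$ then $u_p=\Lambda_p^{1/(p-1)}v_p$ is a weak solution of \eqref{eq:LENSNeumann} with $I_p(u_p)=\tfrac{p-1}{2(p+1)}\Lambda_p^{(p+1)/(p-1)}=L_p$, hence a least-energy solution. For the converse, suppose $u_p$ is a least-energy solution; then $A=\int_\Omega|\nabla u_p|^2=\int_\Omega|u_p|^{p+1}$ and $I_p(u_p)=\tfrac{p-1}{2(p+1)}A=L_p=\tfrac{p-1}{2(p+1)}\Lambda_p^{(p+1)/(p-1)}$, forcing $A=\Lambda_p^{(p+1)/(p-1)}$. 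Then $v:=\Lambda_p^{-1/(p-1)}u_p=u_p/A^{1/(p+1)}$ satisfies $\|v\|_{p+1}=1$, $\int_\Omega|v|^{p-1}v=0$, and $\int_\Omega|\nabla v|^2=A/A^{2/(p+1)}=A^{(p-1)/(p+1)}=\Lambda_p$, so $v$ achieves $\Lambda_p$ and $u_p=\Lambda_p^{1/(p-1)}v$. The nonemptiness claim is then immediate from the first direction together with Theorem \ref{th:NeumannLENS_0}. I do not expect a serious obstacle here; the only point that requires care is bookkeeping the direction of inequalities according to the sign of $p-1$, and making sure the exponents $\tfrac{1}{p-1}$, $\tfrac{p+1}{p-1}$, $\tfrac{p-1}{p+1}$ are manipulated consistently — which is why the case $p=1$ is excluded.
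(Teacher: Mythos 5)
Your approach is the same as the paper's (scale a minimizer of $\Lambda_p$ to produce a competitor for $L_p$, and normalize any solution of \eqref{eq:LENSNeumann} to produce a competitor for $\Lambda_p$), and the converse correspondence of minimizers is handled correctly. However, there is a logical slip in the first inequality for $p<1$. You write that exhibiting one solution with energy $E=\tfrac{p-1}{2(p+1)}\Lambda_p^{(p+1)/(p-1)}$ gives $L_p\le E$ for $p>1$ but $L_p\ge E$ for $p<1$, ``because the prefactor is negative.'' That is not how an infimum works: if $u$ is a weak solution of \eqref{eq:LENSNeumann} with $I_p(u)=E$, then $L_p=\inf\{I_p(w):w\text{ solution}\}\le I_p(u)=E$, full stop, regardless of the sign of $E$ or of any prefactor. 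Sign flips only enter when you pass from the inequality $\Lambda_p\le A^{(p-1)/(p+1)}$ to a bound on $A$ and then to a bound on $I_p(w)$ — which you do correctly in the reverse direction.

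As written, for $p<1$ your argument produces two lower bounds on $L_p$ and no upper bound, so the identity \eqref{eq:rel_Lp_Dp} does not actually follow from what you state. The repair is immediate: for all $p\in(0,\frac{N+2}{N-2}]\setminus\{1\}$ the exhibit-a-solution step gives $L_p\le\tfrac{p-1}{2(p+1)}\Lambda_p^{(p+1)/(p-1)}$, and your reverse step (with its correctly tracked double sign flip) gives $L_p\ge\tfrac{p-1}{2(p+1)}\Lambda_p^{(p+1)/(p-1)}$. With that correction the proof coincides with the paper's.
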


We remark that, while $p\mapsto \Lambda_p$ is continuous, the map $p\mapsto L_p$ has a singularity at $p=1$ if $\Lambda_1=\mu_1\neq 1$.

The next result restates Theorem \ref{th:NeumannLENS_0} in terms of solutions of \eqref{eq:LENSNeumann}, including also asymptotics to the case $p=0$ (which are shown with different techniques, see the discussion below).

\begin{theo}\label{th:NeumannLENS}
Let $\Omega \subset \R^N$, $N\geq 4$, be a smooth bounded domain. Then
the set of least-energy solutions of~\eqref{eq:LENSNeumann} is nonempty for every $p\in[0,\frac{N+2}{N-2}]\backslash \{1\}$ and the map
\[
\left[0,\frac{N+2}{N-2}\right]\backslash \{1\}\to \R^+;\qquad p\mapsto L_p
\] 
is continuous. Let $(p_n)\subset (0,\frac{N+2}{N-2})\backslash \{1\}$ be such that $p_n\to p\in [0,\frac{N+2}{N-2}]$ as $n\to\infty$ and let $u_{p_n}$ denote a corresponding least-energy nodal solution of~\eqref{eq:LENSNeumann}.
\begin{enumerate}
\item \label{i1} If $p=0$, then, up to a subsequence, there is a least-energy solution $u_0$ such that
\begin{equation}\label{eq:convergence_C1a}
u_{p_n} \to u_{0} \qquad \text{ in }  C^{1,\alpha}(\overline \Omega)\ \forall \alpha\in (0,1) \quad \text{ as }n\to \infty.
\end{equation}
 \item  \label{i2} If $p\not\in\{0,1\}$, then, up to a subsequence, there is a least-energy solution $u_p$ such that
\begin{equation}\label{eq:convergence_C2a}
u_{p_n} \to u_{p} \qquad \text{ in }  C^{2,\alpha}(\overline \Omega)\ \forall \alpha\in (0,1) \quad \text{ as }n\to \infty.
\end{equation}
\item \label{i3} If $p=1$ and:
\begin{itemize}
\item if $\mu_1(\Omega)>1$ then 
\[
\lim_{p_n\searrow 1}\|u_{p_n}\|_{L^\infty(\Omega)}{=\lim_{p_n\searrow 1}L_{p_n}}=\infty \quad \text{ and } \quad \lim_{p_n\nearrow 1}\|u_{p_n}\|_{L^\infty(\Omega)}{=\lim_{p_n\nearrow 1}L_{p_n}}=0;
\]  
\item if $\mu_1(\Omega)<1$ then 
\[
\lim_{p_n\searrow 1}\|u_{p_n}\|_{L^\infty(\Omega)}{=\lim_{p_n\searrow 1}L_{p_n}}=0\quad \text{ and } \quad \lim_{{p_n}\nearrow 1}\|u_{p_n}\|_{L^\infty(\Omega)}{=\lim_{p_n\nearrow 1}L_{p_n}}=\infty.
\] 
\end{itemize}
\item \label {i4} If $p=\mu_1(\Omega)=1$ then, up to a subsequence, there is an eigenfunction $\psi_{1}$ satisfying~\eqref{varphi:intro} such that
\begin{align}\label{u1}
u_{p_n}\to u_1:=e^{-\frac{1}{2}\int_\Omega\psi_{1}^2\ln(\psi_{1}^2)}\psi_{1}\quad \text{ in $C^{2,\alpha}(\Omega)$ as $n\to \infty$,} 
\end{align}
and the map $p\mapsto L_p$ is continuous at $p=1$. \newline
In particular, if $\mu_1(\Omega)=1$ is a simple eigenvalue, then the limit of $(u_{p_n})$ is unique.
\end{enumerate}
\end{theo}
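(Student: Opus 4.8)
The plan is to split the proof according to the value of the limiting exponent. For $p\in(0,\frac{N+2}{N-2}]\setminus\{1\}$ I would deduce everything directly from Theorem~\ref{th:NeumannLENS_0} and the rescaling of Proposition~\ref{prop:rel_Lp_Dp}: that proposition gives nonemptiness of the set of l.e.n.s.\ (since $\Lambda_p$ is achieved), the description $u_p=\Lambda_p^{\frac{1}{p-1}}v_p$, and the identity $L_p=\frac{p-1}{2(p+1)}\Lambda_p^{\frac{p+1}{p-1}}$; combined with the continuity and positivity of $p\mapsto\Lambda_p$ on $(0,\frac{N+2}{N-2}]$ and the continuity of the exponent $\frac{p+1}{p-1}$ away from $p=1$, this yields the continuity of $L_p$ on $(0,1)$ and on $(1,\frac{N+2}{N-2}]$ (the critical endpoint included). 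For part~\ref{i2}, given $p_n\to p\in(0,\frac{N+2}{N-2}]\setminus\{1\}$ I would take the minimizers $v_{p_n}$ provided by Theorem~\ref{th:NeumannLENS_0}, which (up to a subsequence) converge in $C^{2,\alpha}(\overline\Omega)$ to a minimizer $v_p$; since $\Lambda_{p_n}^{\frac{1}{p_n-1}}\to\Lambda_p^{\frac{1}{p-1}}\in(0,\infty)$ when $p\ne1$, the rescaled functions $u_{p_n}=\Lambda_{p_n}^{\frac{1}{p_n-1}}v_{p_n}$ converge in $C^{2,\alpha}(\overline\Omega)$ to a l.e.n.s.\ $u_p$.

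For the case $p=1$ (parts~\ref{i3} and~\ref{i4}) I would keep the same rescaling but examine the asymptotics of its factors as $p_n\to1$. Writing $L_{p_n}=\frac{p_n-1}{2(p_n+1)}\exp\big(\frac{p_n+1}{p_n-1}\ln\Lambda_{p_n}\big)$ and $\|u_{p_n}\|_{L^\infty(\Omega)}=\exp\big(\frac{\ln\Lambda_{p_n}}{p_n-1}\big)\|v_{p_n}\|_{L^\infty(\Omega)}$, and using that $\Lambda_{p_n}\to\Lambda_1=\mu_1$ while (up to a subsequence) $v_{p_n}\to\psi_1$ in $C^{2,\alpha}(\overline\Omega)$ for some eigenfunction $\psi_1$ as in~\eqref{varphi:intro} — here I would note that the minimizers of $\Lambda_1=\mu_1$ are precisely the $L^2$-normalized zero-average first eigenfunctions, and that $\|v_{p_n}\|_{p_n+1}=1$ on the bounded domain forces $\liminf\|v_{p_n}\|_{L^\infty(\Omega)}>0$ along the full sequence. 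When $\mu_1\ne1$, $\ln\Lambda_{p_n}\to\ln\mu_1\ne0$ and, according to the sign of $\ln\mu_1$ and to the side from which $p_n\to1$, the exponential either dominates the prefactor (so $\|u_{p_n}\|_{L^\infty(\Omega)}$ and $|L_{p_n}|$ diverge) or forces them to $0$, which is the dichotomy in part~\ref{i3}. When $\mu_1=1$, i.e.\ $\Lambda_1=1$, the refined estimate of Theorem~\ref{th:NeumannLENS_0} says exactly $\Lambda_{p_n}^{\frac{1}{p_n-1}}=\big(\Lambda_1/\Lambda_{p_n}\big)^{\frac{1}{1-p_n}}=e^{-\frac12\int_\Omega\psi_1^2\ln(\psi_1^2)+o(1)}$, so $u_{p_n}=\Lambda_{p_n}^{\frac{1}{p_n-1}}v_{p_n}\to e^{-\frac12\int_\Omega\psi_1^2\ln(\psi_1^2)}\psi_1=u_1$ in $C^{2,\alpha}(\overline\Omega)$ and $L_{p_n}=\frac{p_n-1}{2(p_n+1)}\big(\Lambda_{p_n}^{\frac{1}{p_n-1}}\big)^{p_n+1}\to0=L_1$, giving continuity of $L_p$ at $p=1$; if $\mu_1$ is simple then $\psi_1$ is unique up to sign, so no subsequence is needed.

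Finally, for $p=0$ (part~\ref{i1}) I would argue separately, since Theorem~\ref{th:NeumannLENS_0} does not cover this endpoint. Existence of a l.e.n.s.\ of~\eqref{P0:intro} I would take from the dual variational characterization (cf.\ Proposition~\ref{thm:varcha}). For the convergence, I would first establish $0<\liminf\Lambda_{p_n}\le\limsup\Lambda_{p_n}<\infty$ — the upper bound by testing $\Lambda_{p_n}$ with a fixed function whose positive and negative parts have equal measure, rescaled so as to satisfy the constraint $\int_\Omega|w|^{p_n-1}w=0$ with a rescaling factor that stays bounded and bounded away from $0$ as $p_n\to0$; the lower bound because $\Lambda_{p_n}\to0$ would force the minimizers $v_{p_n}$ to converge to a nonzero constant, which is incompatible with $\int_\Omega|v_{p_n}|^{p_n-1}v_{p_n}=0$. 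By Proposition~\ref{prop:rel_Lp_Dp} this bounds $L_{p_n}$; testing~\eqref{eq:LENSNeumann} with $u_{p_n}$ gives $\int_\Omega|u_{p_n}|^{p_n+1}=\|\nabla u_{p_n}\|_2^2=\frac{2(p_n+1)}{p_n-1}L_{p_n}$ bounded, and controlling the mean of $u_{p_n}$ through $\|u_{p_n}\|_{L^{p_n+1}(\Omega)}$ then yields a uniform $H^1(\Omega)$ bound; since the right-hand side $|u_{p_n}|^{p_n-1}u_{p_n}$ of~\eqref{eq:LENSNeumann} is consequently bounded in $L^\infty(\Omega)$, elliptic regularity gives uniform $W^{2,q}(\Omega)$ bounds for all finite $q$ and hence uniform $C^{1,\alpha}(\overline\Omega)$ bounds. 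Passing to a subsequence $u_{p_n}\to u_0$ in $C^{1,\alpha}(\overline\Omega)$, I would take the limit in~\eqref{eq:LENSNeumann} using that $|u_{p_n}|^{p_n-1}u_{p_n}\to\sign(u_0)$ a.e.\ on $\{u_0\ne0\}$ while $\Delta u_0=0$ a.e.\ on $\{u_0=0\}$, so $u_0$ solves~\eqref{P0:intro}; then $I_0(u_0)=\lim I_{p_n}(u_{p_n})=\lim L_{p_n}$. The hard part is the last step: identifying this common limit with $L_0$, i.e.\ proving that $u_0$ has \emph{least} energy and, equivalently, that $p\mapsto L_p$ is continuous at $p=0$. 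This requires matching the compactness information above with the dual description of $L_0$ (showing that $\Lambda_{p_n}$ actually converges to the limiting constrained minimum), and it is the only genuinely nonroutine point; everything else follows from Theorem~\ref{th:NeumannLENS_0} and Proposition~\ref{prop:rel_Lp_Dp}.
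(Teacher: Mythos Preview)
Your treatment of parts~\ref{i2}, \ref{i3}, and \ref{i4} is essentially the paper's: everything is derived from Theorem~\ref{th:NeumannLENS_0} and Proposition~\ref{prop:rel_Lp_Dp} via the rescaling $u_p=\Lambda_p^{\frac{1}{p-1}}v_p$, and your analysis of the factor $\Lambda_{p_n}^{\frac{1}{p_n-1}}$ as $p_n\to 1$ (including the use of the refined asymptotic when $\mu_1=1$) matches the paper's derivation.

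For part~\ref{i1} there is a real gap: you correctly single out the ``hard part'' --- showing $L_{p_n}\to L_0$, equivalently that the limit $u_0$ is a \emph{least-energy} solution --- but you do not supply an argument, and the route you sketch (through the limit of $\Lambda_{p_n}$) is not how the paper closes it. The paper bypasses $\Lambda_p$ entirely at $p=0$ and runs a direct $\liminf/\limsup$ argument on $L_p$. The inequality $\liminf_n L_{p_n}\geq L_0$ is the easy half (weak lower semicontinuity, once you check $\widehat u\in\cM_0$ by passing to the limit in $\int_\Omega |u_{p_n}|^{p_n-1}u_{p_n}=0$). The key idea you are missing is the upper bound: translate the least-energy solution $u_0$ of~\eqref{P0:intro} by $c_n:=c_{p_n}(u_0)$ so that $u_0+c_n\in\cM_{p_n}$ (Lemma~\ref{eq:auxlemmaPW}), test $L_{p_n}\leq I_{p_n}(u_0+c_n)$, and then use that $(c_n)$ is bounded together with $\|u_0\|_1=\min_{c\in\R}\|u_0+c\|_1$ (again Lemma~\ref{eq:auxlemmaPW}) to get $\limsup_n L_{p_n}\leq I_0(u_0)=L_0$. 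This translation trick is the nonroutine step, and nothing in your outline substitutes for it.

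Two minor points on your $p=0$ argument: the claim that $|u_{p_n}|^{p_n-1}u_{p_n}$ is bounded in $L^\infty(\Omega)$ directly from the $H^1$ bound is not quite right --- you need one bootstrap step (or, as the paper does, dominate $|u_{p_n}|$ by a fixed $L^1$ function via the converse of dominated convergence and use $p_n q<1$ to bound $\int|u_{p_n}|^{p_n q}$). Also, the paper does not pass to the limit in the equation to verify that $\widehat u$ solves~\eqref{P0:intro}; it shows $\widehat u\in\cM_0$ with $I_0(\widehat u)=L_0$ and invokes the result from \cite{PW15} that minimizers of $L_0$ on $\cM_0$ are solutions.
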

For $p>0$, Theorem \ref{th:NeumannLENS} is a direct consequence of Theorem \ref{th:NeumannLENS_0} and Proposition \ref{prop:rel_Lp_Dp}. The variational characterization of least-energy solutions in the case $p=0$ is given in \cite{PW15} and we do a careful asymptotic analysis of the energy and of minimizers. For $p<2^*-1$, an inspection of the proofs shows that Theorem \ref{th:NeumannLENS} holds also for $N\geq 1$, where as usual $2^*=\infty$ if $N=1,2.$  In two dimensions, the simplicity of the second eigenvalue $\mu_1$ can be deduced from geometrical properties of $\Omega$, see \cite[Section 2]{BB99}.  A detailed analysis on the shape of Neumann eigenfunctions associated to $\mu_1$ can be found in \cite{GiraoWeth}.

\smallbreak

Parts 3 and 4 of Theorem \ref{th:NeumannLENS} are Neumann counterparts of similar results obtained in \cite[Theorem 4]{BBGV} for a \emph{superlinear} Dirichlet version of \eqref{eq:LENSNeumann}, namely
\begin{equation}\label{cc}
 -\Delta u = |u|^{p-1}u   \text{ in }\Omega, \qquad 
 u=0  \text{ on }\partial \Omega.
\end{equation}
 The method we use is different from the one in \cite{BBGV} and it has the advantage of giving a precise limit \eqref{u1}, allowing us to also characterize the limit arising from the sublinear regime $p_n\nearrow 1$, (see Remark \ref{bbgv:rmk} for a comparison between the results from \cite{BBGV} and ours).
We remark that such a characterization of the limit of l.e.n.s. $u_{p_n}$ as $p_n\nearrow 1$ does not seem to be known for the Dirichlet case and that the variational characterization of the sublinear Dirichlet l.e.n.s. is particularly delicate, since in general it depends  on the domain (for instance if $\Omega$ is a ball then l.e.n.s. are of mountain pass type, whereas in dumbbell domains this is not the case, see \cite{BMPTW20} for the details). 

Another difference between the Neumann and the Dirichlet problems occurs at the critical regime $p=2^*-1$. Indeed, if $\Omega$ is starshaped there are no nontrivial solutions of \eqref{cc} at $p=2^*-1$ (see for instance \cite[Proposition 1.47]{W96}) and solutions typically blowup as  $p\nearrow 2^*-1$, see \cite{R89,H91} for least energy solutions and \cite{BartschMichelettiPistoia} for least-energy \emph{nodal} solutions. Therefore Theorem~\ref{th:NeumannLENS} shows a difference between the Dirichlet and the Neumann problems regarding the asymptotic behavior of l.e.n.s. as $p\nearrow 2^*-1$. 

\smallbreak

We complement our existence and convergence results with some qualitative analysis of the solutions in radial domains. It is known in several situations that l.e.n.s. have a simpler shape than other solutions: they are foliated Schwarz symmetric (axially symmetric and monotone in the angle variable, see Section \ref{fss:ssec} for a precise definition), not being, however, radially symmetric. This is shown for $p\in (0,2^*-1)$ in \cite[Corollary 1.4]{ST17} (see also \cite[Theorem 1.1]{PW15}). In this paper we extend these results to the critical case $p=2^*-1$, concluding the following.
\begin{prop}\label{prop:partialsymmetry}
 Let $N\geq 4$, let $\Omega$ be a ball and 
 $p\in (0,\frac{N+2}{N-2})$, or let $\Omega$ be an annulus and $p\in(0,\frac{N+2}{N-2}]$. Then every $v_p$ minimizer for $\Lambda_p$ is a nonradial foliated Schwarz symmetric function.
In particular, for $p\in(0,\frac{N+2}{N-2}]\backslash\{1\}$, the same conclusion is true for every least-energy solution $u_{p}$ of \eqref{eq:LENSNeumann}.
\end{prop}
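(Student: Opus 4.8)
The plan is to show that minimizers of $\Lambda_p$ are foliated Schwarz symmetric by a rearrangement/polarization argument, and then deduce non-radiality from the constraint $\int_\Omega |v_p|^{p-1}v_p=0$ together with the equation. First I would recall the polarization (two-point rearrangement) framework for foliated Schwarz symmetry: given a function $u\in H^1(\Omega)$ on a ball or annulus, for every closed half-space $H$ through the center one has the polarized function $u_H$, and $\int_\Omega |\nabla u_H|^2 = \int_\Omega |\nabla u|^2$, $\|u_H\|_{p+1}=\|u\|_{p+1}$, while polarization does \emph{not} in general preserve the nonlinear average $\int_\Omega |u|^{p-1}u$. To get around this I would follow the now-standard trick (as in \cite{ST17,PW15}): work with the map $u\mapsto u_H$ composed with a suitable translation/sign adjustment, or rather observe that the functional and the $L^{p+1}$-constraint are polarization invariant, and that among all polarizations one can select one making $v_p$ comparable to its foliated Schwarz symmetrization; the abstract result of Smets--Willem / Bartsch--Weth--Willem then yields that any minimizer, after a rotation, is either foliated Schwarz symmetric or its negative is. The key analytic input ensuring we land on a genuine minimizer (and not lose the constraint) is that $v_p$ solves the Euler--Lagrange equation $-\Delta v_p = \Lambda_p|v_p|^{p-1}v_p$ with Neumann conditions, so the constraint $\int_\Omega|v_p|^{p-1}v_p=0$ is automatic from the divergence theorem, exactly as in \eqref{comp}; hence it suffices to prove symmetry among \emph{solutions} of the equation realizing the minimal energy, where polarization interacts well with the PDE via the strong maximum principle.

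The heart of the argument is therefore: (i) show $\Lambda_p = \int_\Omega |\nabla v_p|^2$ is not increased by replacing $v_p$ by $v_p \mapsto$ (foliated Schwarz symmetrization relative to a well-chosen axis), using that $v_p$ changes sign and applying polarization to $v_p^+$ and $v_p^-$ separately or to $v_p$ directly; (ii) invoke the characterization that equality in the polarization inequality for \emph{all} admissible half-spaces forces foliated Schwarz symmetry; (iii) upgrade "minimizer ⟹ equality for all $H$" via the fact that a strict inequality for some $H$ would produce a competitor with strictly smaller Dirichlet energy and the same constraints, contradicting minimality — here one must be careful that the polarized competitor still satisfies $\int_\Omega |u_H|^{p-1}u_H = 0$; this can be arranged because in a ball or annulus the competitor can be taken to solve the same PDE after rescaling, or by the argument in \cite[Corollary 1.4]{ST17} which I would cite and adapt verbatim to $p=2^*-1$, the only new point being that the compactness needed for $p=2^*-1$ is already supplied by Theorem~\ref{th:NeumannLENS_0}. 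For non-radiality: if $v_p$ were radial then, being sign-changing (by \eqref{comp}) and satisfying an ODE in $r$, it would have a radial nodal surface; but a foliated-Schwarz-symmetric \emph{and} radial function is constant on spheres, and one then reaches a contradiction with the energy being minimal — a nonradial competitor of lower energy can be constructed explicitly on a ball or annulus (for instance, any function concentrating its positive and negative parts in opposite caps strictly decreases $\int|\nabla u|^2$ relative to a radial sign-changing function, as shown in \cite{ST17,PW15}). Equivalently, one can cite that the radial sign-changing solution with least energy has strictly larger energy than $\Lambda_p$, which is the content of the nonradiality proofs in the references; extending this to $p=2^*-1$ on the annulus is immediate since all estimates there are subcritical-type near the boundary sphere.

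The last sentence of the proposition, transferring the conclusion to least-energy solutions $u_p$ of \eqref{eq:LENSNeumann}, is then immediate from Proposition~\ref{prop:rel_Lp_Dp}: $u_p = \Lambda_p^{1/(p-1)} v_p$ is a positive multiple of a minimizer $v_p$, and foliated Schwarz symmetry and non-radiality are preserved under multiplication by a nonzero constant. The main obstacle I anticipate is step (iii) — ensuring that the polarized/symmetrized competitor keeps the nonlinear average constraint exactly — since polarization does not obviously commute with the map $u\mapsto \int|u|^{p-1}u$ when $p\neq 1$; the resolution is to phrase everything at the level of the PDE (where the constraint is automatic) rather than at the level of the constrained minimization, or to use the refined polarization argument of \cite{ST17} that handles precisely this difficulty. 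The critical exponent $p=2^*-1$ adds no genuine new difficulty here beyond having the existence/compactness of minimizers in hand, which Theorem~\ref{th:NeumannLENS_0} provides for $N\geq 4$.
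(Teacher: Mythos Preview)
Your proposal has a genuine gap in the symmetry step. You claim in~(iii) that ``a strict inequality for some $H$ would produce a competitor with strictly smaller Dirichlet energy,'' but polarization \emph{exactly} preserves $\int_\Omega|\nabla u|^2$ --- there is never a strict inequality in the primal quantity. So the contradiction-with-minimality mechanism you describe simply does not fire for $\Lambda_p$. (Incidentally, your main worry is misplaced: because $t\mapsto|t|^{p-1}t$ is monotone, one has $|u^H|^{p-1}u^H=(|u|^{p-1}u)^H$ pointwise, and polarization preserves all integrals of the form $\int g(u)$; hence $\int_\Omega|u^H|^{p-1}u^H=\int_\Omega|u|^{p-1}u$. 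The constraint is not the obstacle --- the lack of strictness is.)

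The paper's route is different and resolves exactly this. It passes to the \emph{dual} variational problem $D_p=\sup\{\int_\Omega fKf:\ f\in X_p,\ \|f\|_{(p+1)/p}=1\}$, for which $D_p=\Lambda_p^{-1}$ (Lemmas~\ref{lemma:DpLambdap} and~\ref{lemma:D_p=1/Lambda}), and polarizes $f$ rather than $u$. The constraint $\int_\Omega f=0$ is now linear and trivially polarization-invariant, and --- crucially --- the nonlocal quadratic form $\int_\Omega fKf$ \emph{strictly increases} under polarization whenever the characterization~\eqref{char} fails, via a maximum-principle computation on $K_pf$ versus $K_p(f^H)$ (Theorem~\ref{th:sym}). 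This gives the clean contradiction with maximality that your primal argument cannot produce.

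Your non-radiality sketch is also too loose for the one genuinely new case, the annulus at $p=2^*-1$. ``Construct a nonradial competitor with lower energy'' is the right slogan but not an argument; the paper instead argues by contradiction: if the dual maximizer $f$ were radial, then $u$ is radially \emph{monotone} (Theorem~\ref{thm:mono}), so $u_{x_1}$ solves a Dirichlet problem in $\Omega$, and a sharp comparison between the Dirichlet and Neumann inverses applied to $\bar f=p|u|^{p-1}u_{x_1}$ (Lemmas~\ref{lemma1} and~\ref{lemma2}) yields $0\le\int_{\Omega(e_1)}\bar f\,(u_{x_1}-K_p\bar f)<0$. That is Theorem~\ref{nr:tm:intro}. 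For the ball with $p<2^*-1$ and the annulus with $p\le 1$, the result is simply quoted from \cite{ST17,GiraoWeth}. Your final transfer to $u_p$ via Proposition~\ref{prop:rel_Lp_Dp} is correct.
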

The proof relies on an extension of the arguments in \cite{ST17} to the variational setting proposed in \cite{CK91} to study critical Neumann problems. This approach uses a rearrangement-type transformation in a dual setting devised recently in \cite{ST17}, together with a careful study of the radially symmetric l.e.n.s. (see Theorem \ref{th:NeumannLENS:radial} below). We observe that for $p=0$ the full picture is, up to our knowledge, still incomplete. In \cite[Theorem 1.2]{PW15} it is shown that $u_0$ is foliated Schwarz symmetric if $\Omega$ is a ball or an annulus, being not radial if $\Omega$ is a ball (however, the symmetry breaking on the annulus is an open problem). 

\medskip

As a by-product of our approach, we can state the following results, which are a version of Theorem \ref{th:NeumannLENS} and Proposition \ref{prop:partialsymmetry}  in the radial setting, where some additional monotonicity information can be deduced. For simplicity, we only state the results regarding $L_p$, not $\Lambda_p$.

Let $\Omega$ be a ball or an annulus. A least-energy radial solution of~\eqref{eq:LENSNeumann} is defined as a radially symmetric solution achieving the level
\begin{align*}
L_{p,rad}:=\inf \{I_p(w):\ w\in H_{rad}^1(\Omega)\setminus \{0\} \text{ is a weak solution of~\eqref{eq:LENSNeumann}}\},
\end{align*}
where $H_{rad}^1(\Omega):=\{u\in H^1(\Omega) : u\text{ is radially symmetric}\}$.  Let $\mu_{1,rad}=\mu_{1,rad}(\Omega)>0$ denote the first eigenvalue associated to a non constant radial eigenfunction of the Neumann Laplacian in $\Omega$. 

\begin{theo}[Least-energy radial solutions]\label{th:NeumannLENS:radial}
Let $\Omega \subset \R^N$, $N\geq 1$, be either a ball or an annulus and let
\begin{align}\label{Irad}
 \cI_{rad}:=\begin{cases}
     (0,2^*-1)&\text{ if $\Omega$ is a ball},\\
     (0,\infty)&\text{ if $\Omega$ is an annulus}.
    \end{cases}
\end{align}
 Then
the set of least-energy radial solutions of~\eqref{eq:LENSNeumann} is nonempty for every $p\in \cI_{rad}\setminus\{1\}$, and the map
\[
\cI_{rad}\setminus\{1\}\to \R^+;\qquad p\mapsto L_{p,rad}
\] 
is continuous. Let $(p_n)\subset \cI_{rad}\setminus\{1\}$ be such that $p_n\to p\in \overline{\cI_{rad}}$ as $n\to\infty$ and let $u_{p_n}$ denote the corresponding least-energy radial solutions of~\eqref{eq:LENSNeumann}.
\begin{enumerate}
\item \label{i1r} If $p=0$, then, up to a subsequence, there is a least-energy radial solution $u_0$ such that
\begin{equation*}
u_{p_n} \to u_{0} \qquad \text{ in }  C^{1,\alpha}(\overline \Omega)\ \forall \alpha\in (0,1) \quad \text{ as }n\to \infty.
\end{equation*}
 \item  \label{i2r} If $p\not\in\{0,1\}$, then, up to a subsequence, there is a least-energy radial solution $u_p$ such that
\begin{equation*}
u_{p_n} \to u_{p} \qquad \text{ in }  C^{2,\alpha}(\overline \Omega)\ \forall \alpha\in (0,1) \quad \text{ as }n\to \infty.
\end{equation*}
\item \label{i3r} If $p=1$ and:
\begin{itemize}
\item if $\mu_1(\Omega)>1$ then 
\[
\lim_{p_n\searrow 1}\|u_{p_n}\|_{L^\infty(\Omega)}{=\lim_{p_n\searrow 1}L_{p_n,rad}}=\infty \quad \text{ and } \quad \lim_{p_n\nearrow 1}\|u_{p_n}\|_{L^\infty(\Omega)}{=\lim_{p_n\nearrow 1}L_{p_n,rad}}=0;
\]  
\item if $\mu_1(\Omega)<1$ then 
\[
\lim_{p_n\searrow 1}\|u_{p_n}\|_{L^\infty(\Omega)}{=\lim_{p_n\searrow 1}L_{p_n,rad}}=0\quad \text{ and } \quad \lim_{{p_n}\nearrow 1}\|u_{p_n}\|_{L^\infty(\Omega)}{=\lim_{p_n\nearrow 1}L_{p_n,rad}}=\infty.
\] 
\end{itemize}

\item \label {i4r} If $\mu_{1,rad}=p=1$ then, up to a subsequence, there is an $L^2$-normalized radial eigenfunction $\psi_{1,rad}$ associated to $\mu_{1,rad}$ such that
\begin{align}\label{u1rad}
u_{p_n}\to u_1:=e^{-\frac{1}{2}\int_\Omega\psi_{1,rad}^2\ln(\psi_{1,rad}^2)}\psi_{1,rad}\quad \text{ in $C^{2,\alpha}(\Omega)$ as $n\to \infty$,}
\end{align}
and the map $p\mapsto L_{p,rad}$ is continuous at $p=1$. \newline
If $\mu_{1,rad}=1$ is a simple eigenvalue, then the limit of $(u_{p_n})$ is unique.
\item \label{i6r} If $\Omega$ is a ball, then there are no radial solutions for~\eqref{eq:LENSNeumann} if $p\geq 2^*-1$.
\end{enumerate}
\end{theo}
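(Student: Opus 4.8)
The plan is to carry out, inside the radial class $H^1_{rad}(\Omega)$, the same programme used to prove Theorem~\ref{th:NeumannLENS_0}, Proposition~\ref{prop:rel_Lp_Dp} and Theorem~\ref{th:NeumannLENS}, and then to add a Pohozaev-type obstruction for item~\ref{i6r}. First I would introduce the radial counterpart of $\Lambda_p$,
\[
\Lambda_{p,rad}:=\inf\left\{\int_\Omega|\nabla u|^2:\ u\in H^1_{rad}(\Omega),\ \|u\|_{p+1}=1,\ \int_\Omega|u|^{p-1}u=0\right\},
\]
so that $\Lambda_{1,rad}=\mu_{1,rad}$. Two simplifications occur with respect to the full problem. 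Compactness is now automatic in the relevant ranges: if $\Omega$ is a ball then $H^1(\Omega)\hookrightarrow L^{p+1}(\Omega)$ is compact for $p<2^*-1$ by Rellich--Kondrachov, while if $\Omega$ is an annulus then, after the change of variables $u(x)=w(|x|)$, $H^1_{rad}(\Omega)$ is isomorphic to $H^1$ of an interval bounded away from $0$, so that $H^1_{rad}(\Omega)\hookrightarrow C(\overline\Omega)$ and the embedding into every $L^{p+1}(\Omega)$ is compact for all $p>0$; this is precisely what singles out $\cI_{rad}$ and makes the Cherrier-type argument of \cite{CK91} unnecessary (in particular, on the supercritical annulus the minimization for $\Lambda_{p,rad}$ runs over bounded functions and $I_p$ is of class $C^1$ on $H^1_{rad}(\Omega)$). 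Second, a radial minimizer $v_{p,rad}$ for $\Lambda_{p,rad}$, or a radial critical point of $I_p$, satisfies the Euler--Lagrange equation tested against radial functions; since radial test functions determine radial functions, $v_{p,rad}$ solves $-\Delta v_{p,rad}=\Lambda_{p,rad}|v_{p,rad}|^{p-1}v_{p,rad}$ with zero Neumann data, in the classical sense after the usual elliptic bootstrap (on the supercritical annulus the bootstrap is performed on the radial ODE, $v_{p,rad}$ being automatically bounded).

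Granting this, the arguments of Theorem~\ref{th:NeumannLENS_0} and Proposition~\ref{prop:rel_Lp_Dp} apply verbatim in the radial class: testing the constraint $\int_\Omega|v|^{p-1}v=0$ against the radial constant function eliminates the second Lagrange multiplier, so that $\Lambda_{p,rad}$ is attained for $p\in\cI_{rad}$, and the scaling of Proposition~\ref{prop:rel_Lp_Dp} gives $L_{p,rad}=\frac{p-1}{2(p+1)}\Lambda_{p,rad}^{(p+1)/(p-1)}$ together with the bijection $u_{p,rad}=\Lambda_{p,rad}^{1/(p-1)}v_{p,rad}$ for $p\in\cI_{rad}\setminus\{1\}$. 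The map $p\mapsto\Lambda_{p,rad}$ is continuous on $\cI_{rad}$, and for $p_n\to p$ with $p\in\cI_{rad}$ (hence $p<2^*-1$ on the ball, any finite $p>0$ on the annulus) the minimizers converge in $C^{2,\alpha}(\overline\Omega)$ via uniform $H^1$- and $L^\infty$-bounds followed by elliptic (resp.\ ODE) regularity, which is item~\ref{i2r}. Through $L_{p,rad}$ this also yields the $p\to1$ asymptotics: the expansion $\left(\frac{\Lambda_{1,rad}}{\Lambda_{p_n,rad}}\right)^{\frac{1}{1-p_n}}=e^{-\frac12\int_\Omega\psi_{1,rad}^2\ln(\psi_{1,rad}^2)+o(1)}$ is proved as in Theorem~\ref{th:NeumannLENS_0} with $\psi_1$ replaced by a non-constant $L^2$-normalized radial Neumann eigenfunction $\psi_{1,rad}$ associated with $\mu_{1,rad}$, and it gives the $\|u_{p_n}\|_\infty=L_{p_n,rad}$ dichotomy of item~\ref{i3r} and the explicit limit of item~\ref{i4r} (with the uniqueness clause as in Theorem~\ref{th:NeumannLENS}). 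Finally, item~\ref{i1r} is the radial restriction of the variational characterization of \cite{PW15} together with the asymptotic analysis already carried out for item~\ref{i1} of Theorem~\ref{th:NeumannLENS}, all of whose arguments respect radial symmetry.

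It remains to prove the non-existence statement, item~\ref{i6r}. Let $\Omega=B_R$ and let $u$ be a radial classical solution of~\eqref{eq:LENSNeumann} with $p\geq2^*-1$ (so $N\geq3$, the statement being vacuous otherwise). Multiplying the equation by $x\cdot\nabla u$ and integrating by parts on $B_R$ gives the Pohozaev identity
\[
\frac{2-N}{2}\int_{B_R}|\nabla u|^2+\frac12\int_{\partial B_R}|\nabla u|^2(x\cdot\nu)-\int_{\partial B_R}(\partial_\nu u)(x\cdot\nabla u)=\frac{R}{p+1}\int_{\partial B_R}|u|^{p+1}-\frac{N}{p+1}\int_{B_R}|u|^{p+1}.
\]
On $\partial B_R$ the Neumann condition gives $\partial_\nu u=u'(R)=0$, and since $u$ is radial this forces $\nabla u\equiv0$ on $\partial B_R$; this is the crucial point, as it removes the tangential-gradient boundary term that otherwise permits nontrivial critical Neumann solutions on general domains. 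Using in addition $\int_{B_R}|\nabla u|^2=\int_{B_R}|u|^{p+1}$ (test the equation with $u$) and simplifying, one obtains
\[
\frac{N+2-(N-2)p}{2}\int_{B_R}|u|^{p+1}=R\int_{\partial B_R}|u|^{p+1}\geq0.
\]
If $p>2^*-1$ the coefficient on the left is strictly negative, so $\int_{B_R}|u|^{p+1}=0$, i.e.\ $u\equiv0$. If $p=2^*-1$ the coefficient vanishes, hence $u=0$ on $\partial B_R$; together with $u'(R)=0$ and the local Lipschitz continuity of $t\mapsto|t|^{p-1}t$ for $p\geq1$, the uniqueness of solutions of the radial ODE $(r^{N-1}u')'=-r^{N-1}|u|^{p-1}u$ integrated inward from $r=R$ forces $u\equiv0$. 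This proves item~\ref{i6r}.

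The main obstacle is not a single hard estimate but the correct delimitation of the range of the convergence statements for the ball, where the radial picture genuinely departs from the general one. In the full problem, for $N\geq4$ the Cherrier estimate places $\Lambda_{2^*-1}$ strictly below the critical threshold, so it is attained; but within the radial class on a ball the only concentration profile available to a minimizing sequence for $\Lambda_{p,rad}$ is the interior Aubin--Talenti bubble centred at the origin (boundary half-bubbles being incompatible with radial symmetry), which cannot be undercut. Consequently $\Lambda_{2^*-1,rad}$ is not attained on the ball --- any minimizer would solve~\eqref{eq:LENSNeumann}, contradicting item~\ref{i6r} --- the minimization loses compactness as $p\nearrow2^*-1$, and the corresponding least-energy radial solutions concentrate at the origin; this is exactly why $\cI_{rad}$ excludes the endpoint $2^*-1$ for the ball, whereas on the annulus the compact embedding $H^1_{rad}(\Omega)\hookrightarrow C(\overline\Omega)$ removes any such obstruction and $\cI_{rad}=(0,\infty)$. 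The remaining bookkeeping --- uniform regularity along $p_n\to p$ and the delicate asymptotics at $p=0$ --- is carried out exactly as in the proofs of Theorems~\ref{th:NeumannLENS_0} and~\ref{th:NeumannLENS}.
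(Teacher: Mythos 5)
Your proposal is correct and follows essentially the same programme as the paper: repeat the $\Lambda_p$/$D_p$/dual framework inside $H^1_{rad}(\Omega)$, where compactness is automatic on the annulus for all $p>0$ (via $H^1_{rad}\hookrightarrow C(\overline\Omega)$) and on the ball for $p<2^*-1$ by Rellich--Kondrachov, then transfer items~\ref{i1r}--\ref{i4r} from Theorems~\ref{th:NeumannLENS_0}, \ref{th:NeumannLENS} and Proposition~\ref{prop:rel_Lp_Dp}, and settle item~\ref{i6r} by a Pohozaev obstruction. The one genuine variation is your treatment of item~\ref{i6r}: you run the Pohozaev identity for the Neumann problem on the full ball $B_R$, observing that radial symmetry plus $\partial_\nu u=0$ forces $\nabla u\equiv 0$ on $\partial B_R$ and hence kills the tangential boundary term, yielding $\frac{N+2-(N-2)p}{2}\int_{B_R}|u|^{p+1}=R\int_{\partial B_R}|u|^{p+1}\geq 0$, which is directly contradictory for $p>2^*-1$ and forces zero Cauchy data at $r=R$ (hence $u\equiv 0$ by ODE uniqueness) at $p=2^*-1$. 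The paper instead exploits the forced sign change: restricting a radial solution to the sub-ball $B_{r_0}$ bounded by its first interior zero $r_0$ gives a nontrivial radial \emph{Dirichlet} solution of Lane--Emden in a ball, which Pohozaev excludes for $p\geq 2^*-1$. Both routes are correct; yours is more direct and self-contained, while the paper's reduction to the known Dirichlet result is shorter and makes the link to the blow-up interpretation (concentration of the enclosed Dirichlet bubble as $p\nearrow 2^*-1$) explicit. The heuristic remark about the Aubin--Talenti interior bubble on the ball is harmless but not load-bearing: the clean logical statement, which you do give, is that item~\ref{i6r} forbids any attainment of $\Lambda_{2^*-1,rad}$ on the ball, which is why $\cI_{rad}$ is open at the right endpoint there.
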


Observe that, due to the change of sign, radial solutions of~\eqref{eq:LENSNeumann} enclose a radial \emph{Dirichlet} solution of the Lane-Emden equation in a subdomain of $\Omega$, and therefore, due to the Pohozaev identity, radial solutions of~\eqref{eq:LENSNeumann} \textit{cannot} exist for $p=2^*-1$. As a consequence, radial solutions (and in particular l.e.n.s.) exhibit a concentration phenomenon as $p\nearrow 2^*-1$, which can be studied in detail, see \cite{GST20,IS21}.

\medskip

In \cite[Corollary 1.4]{ST17} it is shown that, in balls and annuli under $p<2^*-1$, radial l.e.n.s. are monotone in the radial variable. The next results extends this monotonicity to the critical and supercritical case in annuli.
\begin{prop}\label{new:prop}
 Under the assumptions of Theorem \ref{th:NeumannLENS:radial}, $u_p$ is monotone in the radial variable for $p\in\cI_{rad}\backslash\{0,1\}$.
\end{prop}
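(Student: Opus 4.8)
The plan is to show radial monotonicity of the least-energy radial solution $u_p$ for $p\in\cI_{rad}\setminus\{0,1\}$ by exploiting the structure of solutions on a ball or annulus combined with a one-dimensional rearrangement (``radial symmetrization'') argument in the dual setting of \cite{ST17}, together with the fact that $u_p = \Lambda_{p,rad}^{1/(p-1)} v_p$ for a minimizer $v_p$ of the radial version of $\Lambda_p$. First I would reduce the problem to a one-dimensional ODE: writing $u_p(x)=w(r)$ with $r=|x|$, equation~\eqref{eq:LENSNeumann} becomes $-(r^{N-1}w')' = r^{N-1}|w|^{p-1}w$ on the interval $(0,R)$ (or $(R_1,R_2)$ in the annular case) with $w'$ vanishing at the endpoints. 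Since $u_p$ is sign-changing, $w$ has at least one interior zero. The heart of the matter is to prove that $w$ is strictly monotone, equivalently that $w'$ does not vanish in the interior, equivalently that $w$ has exactly one zero and no interior critical point other than the global max/min located at the two endpoints.

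The key step is an energy-comparison (rearrangement) argument: given any radial competitor $w$ with $k\geq 2$ sign changes, I would construct, via a one-dimensional polarization/rearrangement in the variable $s$ obtained by the change of variables $ds = r^{1-N}dr$ (which turns the weighted operator into a standard second derivative), a new radial function $\tilde w$ that is monotone, still admissible (same $L^{p+1}$ norm, still satisfying the nonlinear-average constraint $\int_\Omega |\tilde w|^{p-1}\tilde w = 0$), and with $\int_\Omega|\nabla \tilde w|^2 \le \int_\Omega|\nabla w|^2$, with strict inequality unless $w$ was already monotone. This shows a monotone minimizer exists; then a strong-maximum-principle / Hopf-lemma argument applied to the ODE shows that \emph{every} minimizer is monotone (or, more directly, one shows monotonicity is forced because a non-monotone solution would have an interior local extremum, around which a small perturbation strictly decreases the Rayleigh quotient). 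The passage $p\to 2^*-1$ or $p\to\infty$ in the annular case is handled by the convergence in $C^{2,\alpha}$ already established in Theorem~\ref{th:NeumannLENS:radial}, so the monotonicity, being a closed condition, passes to the limit; thus it suffices to argue for $p$ in the subcritical open range and invoke continuity, or alternatively to run the rearrangement argument directly at the critical/supercritical exponent since the radial Sobolev embedding $H^1_{rad}\hookrightarrow L^q$ is compact for all $q<\infty$ on an annulus.

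I would organize the proof as follows. Step 1: record the ODE and the change of variables $s=s(r)$ normalizing the weight, so that admissible radial functions correspond to $H^1$ functions of one variable with a weighted $L^{p+1}$ constraint and the operator is $-w_{ss}$ (up to the weight in the nonlinearity). Step 2: prove the one-dimensional rearrangement inequality — this is where I expect the main obstacle, because the weight $r^{N-1}$ in the nonlinear term is not symmetric under the rearrangement, so the standard ``decreasing rearrangement decreases Dirichlet energy'' lemma does not apply verbatim; one needs a monotone (increasing or decreasing) rearrangement adapted to the weighted measure that simultaneously preserves the weighted $L^{p+1}$ norm, preserves the zero-nonlinear-average constraint, and does not increase the (unweighted, since $|\nabla u|^2 = w'(r)^2$ times $r^{N-1}$... careful: the Dirichlet energy is $\int w'(r)^2 r^{N-1}dr$, which in the $s$ variable is again weighted) Dirichlet energy. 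The cleanest route is likely a \emph{polarization} (two-point rearrangement) argument on the interval: repeatedly reflect the part of the graph that violates monotonicity, each reflection not increasing the energy and driving the function toward monotonicity, then pass to the limit of iterated polarizations. Step 3: conclude that some l.e.r.s. is monotone, hence $L_{p,rad}$ is attained by a monotone function; Step 4: upgrade to ``every'' l.e.r.s. is monotone using the ODE structure — a least-energy radial solution has minimal energy among radial solutions, and a classical Sturm-type / shooting analysis shows the solution with the fewest sign changes (namely one) is the energy minimizer, and having exactly one zero together with Neumann boundary conditions forces strict monotonicity by the Hopf lemma at interior critical points.

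The main obstacle, to restate it, is the weighted-rearrangement step: making the rearrangement respect simultaneously the Dirichlet energy $\int r^{N-1}(w')^2\,dr$, the constraint $\int r^{N-1}|w|^{p+1}\,dr = 1$, and the nonlinear average $\int r^{N-1}|w|^{p-1}w\,dr = 0$. A workable strategy is to perform the rearrangement in two pieces — on the positivity set and on the negativity set of $w$ separately, after splitting the domain at the (unique, after rearrangement) nodal point — using that on each piece one is dealing with a signed function and can apply a monotone rearrangement with respect to the measure $r^{N-1}dr$, which does not increase $\int (w')^2 r^{N-1}dr$ (this is the weighted Pólya–Szegő inequality in one dimension, valid for any monotone weight), and preserves $\int r^{N-1}|w|^{p+1}$; the nodal point is then chosen so that the zero-average condition is restored, which is possible by a continuity/intermediate-value argument on the choice of splitting point. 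Once this lemma is in place, everything else is routine and can be invoked from the results already quoted in the excerpt (Theorem~\ref{th:NeumannLENS_0}, Proposition~\ref{prop:rel_Lp_Dp}, Theorem~\ref{th:NeumannLENS:radial}).
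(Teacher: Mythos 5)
The paper proves this proposition by a completely different route from the one you propose. The paper's proof is essentially a citation: for $p\in(0,2^*-1)\setminus\{1\}$ (ball or annulus) it invokes \cite[Corollary 1.4]{ST17}, and for $p\geq 2^*-1$ on the annulus it invokes Theorem~\ref{thm:mono}, whose proof works entirely in the \emph{dual} framework. There one maximizes $\int_\Omega fKf$ over $f\in X_{p,\mathrm{rad}}$ with $\|f\|_{(p+1)/p}=1$, and applies the flip-\&-rearrange transformation $f\mapsto f^{\divideontimes}$ of \cite[Theorem~1.3]{ST17} (recalled as Theorem~\ref{thm:trans}) \emph{to the source term} $f=-\Delta u$, not to $u$. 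That transformation preserves $\|f\|_{(p+1)/p}$ and satisfies $\int f^{\divideontimes}Kf^{\divideontimes}\geq\int fKf$ with equality only if $f$ is already monotone, and $Kf$ is then shown to be monotone as well. Crucially, since the transformation preserves \emph{all} $L^q$ norms of $f$ and the energy is the quadratic form $\int fKf$, there is no weighted Pólya--Szeg\H{o} inequality to prove; the rearrangement inequality for $\int fKf$ is the content of \cite[Theorem~1.3]{ST17}.

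Your proposal works directly on $w(r)=u(|x|)$ in $H^1_{\mathrm{rad}}$, and hinges on the claim that the decreasing (or increasing) rearrangement of $w$ with respect to the measure $r^{N-1}\,dr$ does not increase $\int_a^1 (w')^2 r^{N-1}\,dr$, ``the weighted Pólya--Szeg\H{o} inequality in one dimension, valid for any monotone weight.'' This is the gap: that statement is not a standard result and, as asserted, is false. After the measure-normalizing change of variables $d\tau = r^{N-1}dr$, the Dirichlet energy becomes $\int (w_\tau)^2 \rho(\tau)^2\,d\tau$ with $\rho$ a nonconstant increasing weight, while the constraints become unweighted in $\tau$; the standard decreasing rearrangement in $\tau$ then controls $\int (w_\tau)^2\,d\tau$ but not $\int (w_\tau)^2\rho^2\,d\tau$. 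Concretely, if one tries the Cauchy--Schwarz/co-area proof of Pólya--Szeg\H{o}, one needs every level $t$ to have a level point $r$ with $\rho(r)\geq\rho(r_t)$ (where $r_t$ is the corresponding level of $w^*$); this can fail when $\{w>t\}$ touches the endpoint of the interval at which $\rho$ is largest, so the rearrangement can \emph{increase} the weighted Dirichlet energy. The same obstruction afflicts the polarization variant, since the reflection $r\mapsto 2c-r$ does not preserve $r^{N-1}\,dr$. Your two-piece strategy with an intermediate-value choice of nodal point does not remove this difficulty: each piece still requires a one-sided weighted Pólya--Szeg\H{o} that is not available, and for one of the two pieces the direction of rearrangement is exactly the unfavorable one. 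This is precisely the obstacle that the dual formulation of \cite{ST17,CK91} was designed to circumvent, and it is why the paper proves monotonicity through Theorem~\ref{thm:mono} rather than by a direct $H^1$ rearrangement. (A smaller remark: your continuity argument passing monotonicity to $p=2^*-1$ would only give non-strict monotonicity in the limit, whereas the paper obtains strict monotonicity directly at every $p$ via the equality case in Theorem~\ref{thm:trans}.)
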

The proof relies on a dual-energy-preserving transformation, therefore it cannot be used for the case $p=0$, which lacks a dual formulation because the sign nonlinearity is not invertible. Note however that the convergence result Theorem \ref{th:NeumannLENS:radial} guarantees the existence of at least one monotone radial l.e.n.s for $p=0$. 

\medskip

In the last result of this introduction, we present a  proposition that clarifies the relationship between $L_p$ and Nehari-type sets. The first two parts are known, see \cite{PW15}, and we include them for completeness.
\begin{prop}\label{thm:varcha}
Let  $N\geq 4$, $\Omega \subset \R^N$ be a smooth bounded domain. Then the following holds.
\begin{enumerate}
\item For $p=0$, 
\begin{align*}
 L_0=\inf_{u\in\mathcal{M}_0}I_0(u),
\end{align*}
where $\mathcal{M}_0$ is the Nehari-type set
\[
 \cM_0:=\left\{ 
 u\in H^1(\Omega)\backslash\{0\}\::\: 
 \Big|
 |\{u>0\}|-|\{u<0\}|
 \Big|\leq |\{u=0\}|
 \right\}.
\]
Moreover, any $u\in \cM_0$ such that $I_0(u)=L_0$ is a solution to \eqref{P0:intro}.
\item For $p\in (0,1)$, 
\[
L_p=\inf\{I_p(u):\ u\in H^1(\Omega)\setminus \{0\},\ \int_\Omega |u|^{p-1}u=0\}.
\]
Moreover, any $u\in H^1(\Omega)\setminus \{0\}$ such that $\int_\Omega |u|^{p-1} u=0$ and $I_p(u)=L_p$ is a solution to \eqref{eq:LENSNeumann}.
\item For $p\in (1,2^*-1]$,
\[
L_p= \inf_{w\in \mathcal{M}_p} I_p(w)= \mathop{\inf_{w\in H^1(\Omega)\setminus\{0\}}}_{\int_\Omega |w|^{{p}-1}w=0} \sup_{t>0} I_p(tw),
\]
where $\mathcal{M}$ is the Nehari-type set
\[
\mathcal{M}_p=\left\{w\in H^1(\Omega)\backslash \{0\}:\ \int_\Omega |w|^{{p}-1}w=0,\ I'_p(w)w=0  \right\}.
\]
Moreover, any $u\in \mathcal{M}_p$ such that $I_p(u)=L_p$ is a solution to \eqref{eq:LENSNeumann}.
\end{enumerate}
\end{prop}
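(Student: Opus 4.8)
The plan rests on the compatibility identity \eqref{comp}: every weak solution of \eqref{eq:LENSNeumann} --- or of \eqref{P0:intro}, in which case $\int_\Omega\sign(u)=0$, i.e.\ $|\{u>0\}|=|\{u<0\}|$ --- automatically lies in the Nehari-type set of its regime. Thus the inequality $\inf_{\mathcal{M}_p}I_p\leq L_p$ is free in all three cases, and what remains is, in each regime: (i) that the constrained infimum is attained; (ii) that every constrained minimizer is a free critical point of $I_p$ on $H^1(\Omega)$ (so the Neumann condition is automatic); and (iii), combining (i)--(ii), that $L_p=\inf_{\mathcal{M}_p}I_p$ with the described minimizers being solutions.

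For the superlinear regime $p\in(1,2^*-1]$ (Part 3) I would first dispose of the minimax identity by the fibering method: if $w\in H^1(\Omega)\setminus\{0\}$ and $\int_\Omega|w|^{p-1}w=0$, then $w$ changes sign, hence is nonconstant and $\|\nabla w\|_2>0$, so $t\mapsto I_p(tw)=\tfrac{t^2}{2}\|\nabla w\|_2^2-\tfrac{t^{p+1}}{p+1}\|w\|_{p+1}^{p+1}$ has (as $p+1>2$) a unique maximizer $t_w>0$ with $t_w w\in\mathcal{M}_p$; since $w\mapsto t_w w$ maps each ray of $\{w\neq0:\int_\Omega|w|^{p-1}w=0\}$ to a single point of $\mathcal{M}_p$ and $\sup_{t>0}I_p(tw)$ depends only on the ray, the two infima coincide. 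To identify their common value with $L_p$ I would invoke the already-stated Proposition \ref{prop:rel_Lp_Dp} and Theorem \ref{th:NeumannLENS_0}: for $w\in\mathcal{M}_p$ one has $\|\nabla w\|_2^2=\|w\|_{p+1}^{p+1}$, so $v:=w/\|w\|_{p+1}$ is admissible in \eqref{Lambdap:def} and $\|\nabla v\|_2^2=\|w\|_{p+1}^{p-1}\geq\Lambda_p$; rearranging gives $\|\nabla w\|_2^2\geq\Lambda_p^{(p+1)/(p-1)}$, whence $I_p(w)=\tfrac{p-1}{2(p+1)}\|\nabla w\|_2^2\geq\tfrac{p-1}{2(p+1)}\Lambda_p^{(p+1)/(p-1)}=L_p$ by Proposition \ref{prop:rel_Lp_Dp}. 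Since $\Lambda_p$ is attained (Theorem \ref{th:NeumannLENS_0}) and its rescaled minimizer is a least-energy solution lying in $\mathcal{M}_p$ (Proposition \ref{prop:rel_Lp_Dp}), the bound is saturated, so $L_p=\inf_{\mathcal{M}_p}I_p$; and chasing the equalities shows that a minimizer $w\in\mathcal{M}_p$ of $I_p$ must have $w/\|w\|_{p+1}$ achieving $\Lambda_p$ and $\|w\|_{p+1}=\Lambda_p^{1/(p-1)}$, so $w=\Lambda_p^{1/(p-1)}(w/\|w\|_{p+1})$ is a least-energy solution by Proposition \ref{prop:rel_Lp_Dp}. (Independently, any minimizer $w$ of $I_p$ on $\mathcal{M}_p$ solves \eqref{eq:LENSNeumann} by Lagrange multipliers for the two $C^1$ constraints $I_p'(w)w=0$ and $\int_\Omega|w|^{p-1}w=0$: testing the resulting Euler equation with $w$ and with the constant function $1$, and using $p\neq1$, $\|\nabla w\|_2>0$ and $\int_\Omega|w|^{p-1}>0$, forces both multipliers to vanish.)

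For the sublinear regimes (Parts 1--2) I would follow \cite{PW15}. For $p\in(0,1)$ the set $\{w\in H^1(\Omega)\setminus\{0\}:\int_\Omega|w|^{p-1}w=0\}$ is a cone, sequentially weakly closed because $w\mapsto\int_\Omega|w|^{p-1}w$ is weakly continuous via the compact embedding $H^1(\Omega)\hookrightarrow L^{p+1}(\Omega)$; a ``nonlinear Poincar\'e'' bound $|\bar w|\leq C\|\nabla w\|_2$ on this cone --- where $\bar w=\tfrac{1}{|\Omega|}\int_\Omega w$, proved by normalizing $\|\nabla w_k\|_2=1$ and noting that $|\bar w_k|\to\infty$ would force $w_k/|\bar w_k|\to\pm1$ in $L^{p+1}(\Omega)$, contradicting the constraint --- together with $p+1<2$ gives coercivity and lower boundedness of $I_p$, and since the infimum is negative the minimizer $u$ is nonzero. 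That $u$ solves \eqref{eq:LENSNeumann}, despite the non-differentiability of $w\mapsto\int_\Omega|w|^{p-1}w$ at the zero set of $w$, is obtained by perturbing inside the constraint, i.e.\ adjusting an additive constant $t=t(s)$ so that $u+s\phi+t(s)$ stays admissible and differentiating $s\mapsto I_p(u+s\phi+t(s))$ at $s=0$. For $p=0$, rewriting $\mathcal{M}_0=\{u\in H^1(\Omega)\setminus\{0\}:|\{u>0\}|\leq\tfrac{1}{2}|\Omega|,\ |\{u<0\}|\leq\tfrac{1}{2}|\Omega|\}$ shows it is sequentially weakly closed by Fatou's lemma applied to $\mathbf{1}_{\{u>0\}}$ and $\mathbf{1}_{\{u<0\}}$ along an a.e.-convergent subsequence; coercivity of $I_0$ on $\mathcal{M}_0$ follows from the analogous bound $|\bar u|\leq C\|\nabla u\|_2$ (a function whose mean is large relative to $\|\nabla u\|_2$ has one sign on more than half of $\Omega$), and the negativity of the infimum gives a nonzero minimizer, which solves $-\Delta u=\sign(u)$ by analyzing the convex subdifferential of $\int_\Omega|u|$ against the constraint, as in \cite{PW15}.

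The obstacles I expect are concentrated at the two degenerate endpoints. In Part 1, $u\mapsto\int_\Omega|u|$ is non-smooth and $\mathcal{M}_0$ is cut out by an inequality, so the equation must be read off from a subdifferential inclusion rather than from classical Lagrange multipliers --- the delicate part of \cite{PW15}. In Part 3 the sensitive point is the critical exponent $p=2^*-1$: for $p<2^*-1$ one could attain $\inf_{\mathcal{M}_p}I_p$ by direct minimization using the compact embedding $H^1(\Omega)\hookrightarrow L^{p+1}(\Omega)$, but at $p=2^*-1$ this compactness is lost, which is exactly why I would route the attainment step through Theorem \ref{th:NeumannLENS_0}, whose proof that $\Lambda_{2^*-1}$ is attained uses a compactness estimate valid only for $N\geq4$.
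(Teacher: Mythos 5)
Your proposal is correct, and for Parts 1 and 3 it follows essentially the paper's route: for Part 3 the bound $I_p(w)\geq\frac{p-1}{2(p+1)}\Lambda_p^{(p+1)/(p-1)}$ for $w\in\mathcal{M}_p$, followed by Proposition~\ref{prop:rel_Lp_Dp} for saturation, is exactly the paper's argument, with you supplying the fibering method and the two-constraint Lagrange-multiplier calculation that the paper leaves as ``standard arguments''; for Part 1 both you and the paper defer to \cite{PW15}. The one genuine divergence is in Part 2. You propose re-doing the constrained minimization \`a la \cite{PW15} (weak closedness of the cone $\{w\neq 0:\int_\Omega|w|^{p-1}w=0\}$, a nonlinear Poincar\'e bound for coercivity, negative infimum to exclude the zero limit, and the additive-constant perturbation to derive the Euler--Lagrange equation across the non-smooth constraint). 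The paper instead leverages machinery already built: $L_p$ is attained by a $C^2$ solution (Propositions~\ref{prop:rel_Lp_Dp},~\ref{prop:regularity}), which lies in the constraint set after integrating the equation, giving $L_p\geq m_p$; and for $m_p\geq L_p$ it applies the same $\Lambda_p$ lower bound you used in Part 3, now with $\varphi(t)=\frac{\Lambda_p}{2}t^2-\frac{1}{p+1}t^{p+1}$ minimized (rather than maximized) since $p+1<2$. Your route is self-contained but redoes compactness work the paper has already packaged into the dual method; the paper's route is shorter and uses $\Lambda_p$ uniformly across Parts 2 and 3. Both are valid, and the paper explicitly notes that the \cite{PW15} route you chose is also available.
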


\medbreak

We now briefly explain the different variational approaches involved in our proofs and why they are needed.

If $p=2^*-1$, the critical case, the only available proof of existence of least-energy solutions relies on the dual method, see \cite{CK91}, which allows to handle the nonlinear constraint $\int_\Omega |u|^{2^*-2}u=0$ in \eqref{Lambdap:def}.  For a linear constraint $\int_\Omega u=0$ one can use a direct approach, as in \cite{GiraoWeth}, where the authors show the existence of a minimizer of  
\begin{align}\label{GW:pb}
\inf \left\{ \int_\Omega |\nabla u|^2:\ u\in H^1(\Omega),\ \|u\|_{2^*}=1,\ \int_\Omega u=0\right\},\quad N\geq 3,
\end{align}
and qualitative properties. However, the techniques in \cite{GiraoWeth} strongly rely on the linearity of the constraint and cannot be directly extended to \eqref{Lambdap:def}, see Remark~\ref{GW:rmk}.

Therefore, to prove Theorem \ref{th:NeumannLENS_0} and the case $p>0$ in Theorem \ref{th:NeumannLENS} we also rely on the dual method.  To describe this framework, we introduce some notation. Let
\begin{align}\label{Xp}
X_p=\Big\{f\in L^{\frac{p+1}{p}}(\Omega): \ \int_\Omega f =0\Big\},
\end{align}
and let $K$ denote the inverse (Neumann) Laplace operator with zero average, that is, if $h\in X_p(\Omega)$, then $u := Kh\in W^{2,\frac{p+1}{p}}(\Omega)$ is the unique strong solution of $-\Delta u = h$ in $\Omega$ satisfying $\partial_\nu u=0$ on $\partial \Omega$ and $\int_\Omega u = 0$, see Lemma \ref{l:reg} below.  In this setting, the (dual) energy functional $\phi_p:X_p\to \R$ is given by
\begin{align*}
 \phi_p(f):=\int_\Omega \frac{p}{p+1}|f|^{\frac{p+1}{p}}-f\, K f\ dx,\qquad f\in X_p.
\end{align*}
Since \eqref{comp} holds for any nontrivial solution, we require a suitable translation of $K$. Let $K_p:X^\frac{p+1}{p}\to W^{2,\frac{p+1}{p}}(\Omega)$ be given by 
\begin{align}\label{Ks:def}
K_p h:=K h +c_p(Kh)\qquad \text{ for some }\quad c_p(Kh)\in\R\quad \text{ such that }\int_\Omega |K_p h|^{p-1}K_p h=0.  
\end{align}
Then, a critical point $f$ of $\phi_p$ solves the dual equation
$K_{p} f=|f|^{\frac{1}{p}-1}f$ in $\Omega$.

Now, consider the variational problem
\begin{align}\label{Dpintro}
D_p:=\sup\left\{\int_\Omega fK f:\ f\in X_p,\ \|f\|_{\frac{{p}+1}{{p}}}=1\right\},\qquad p>0.
\end{align}
Note that the dual method has translated the nonlinear constraint $\int_\Omega |u|^{p-1}u=0$ into the linear constraint $\int_\Omega f =0$ in \eqref{Xp}. 

In \cite{CK91}, the authors show that a maximizer $f_{p}$ of \eqref{Dpintro} is achieved for $p=2^*-1$ and that $u_{p} := D_{p}^{-\frac{p}{p-1}}K_{p}f_p$ is a solution of the critical problem \eqref{eq:LENSNeumann}. In \cite{CK91} it is not proved that this is a l.e.n.s., but we show that this is indeed the case in Lemma \ref{lemma:D} below, establishing also that $D_p$ is the inverse of $\Lambda_p$, see Lemma \ref{lemma:DpLambdap}.

On the other hand, one cannot consider $p=0$ in \eqref{Dpintro}. To study the asymptotics as $p\searrow 0$ we use a direct approach and the variational characterizations established in \cite{PW15} and that we recalled in Proposition \ref{thm:varcha}.

Finally, we do not know of any variational approach that can be used to show the existence of l.e.n.s. of \eqref{eq:LENSNeumann} in the supercritical regime $(p>2^*-1)$. If $\Omega$ is an annulus, then one can easily find a radially symmetric solution with least energy among radial functions, recall Theorem \ref{th:NeumannLENS:radial}. 

\medskip

The paper is organized as follows. In Section \ref{sec:reg} we show regularity estimates for solutions. Section \ref{sec:3} contains the proof of the existence and asymptotic behavior of l.e.n.s. for $p>0$. In particular, Subsection \ref{dm:sec} presents in detail the dual method, the study of $D_p$ the convergence properties of extremizers. Then Subsection \ref{sec:Lambda_p} transfers that information to the study of $\Lambda_p$ (where we prove Theorem \ref{th:NeumannLENS_0}) and in   Subsection \ref{sec:L_p} we apply those results to the least-energy level $L_p$, proving  Proposition \ref{prop:rel_Lp_Dp}, Theorem \ref{th:NeumannLENS}--2,3,4 and Proposition \ref{thm:varcha}.  In Section \ref{p0:sec} we analyze the convergence at $p=0$, proving Theorem \ref{th:NeumannLENS}--1. Section \ref{symm:sec} is devoted to the proof of our symmetry and symmetry-breaking results. The monotonicity of radial solutions is also shown here, see Subsection \ref{proofsradial:sec}.  Finally, in Section \ref{num:sec}, we present some numerical approximations of radial solutions in an annulus to illustrate their behavior when $p$ varies in the interval $[0,\infty)$.

\section{Regularity}\label{sec:reg}

Implicit in the statement of Theorem \ref{eq:LENSNeumann} and Theorem~\ref{th:NeumannLENS} is the fact that, for $p\in (0,2^*-1]\backslash \{1\}$, weak solutions of the problem~\eqref{eq:LENSNeumann}  (i.e. critical points of $I_p$) are of class $C^{2,\alpha}(\overline \Omega)$ for every $\alpha\in (0,1)$. This is a consequence of elliptic regularity together with either a bootstrap or a Brezis-Kato type argument, Sobolev inequalities, and Schauder estimates. Since these arguments are not that well known in the context of Neumann boundary value problems, we provide here the details for completeness, see Proposition~\ref{prop:regularity} below.  The starting point is  the following  regularity result for the Neumann problem~\eqref{eq:LENSNeumann} (see \cite[Theorem and Lemma in page 143]{RR85} and also \cite[Theorem 15.2]{ADN59}).
\begin{lemma}\label{l:reg}
Let $N\geq 1$, $\Omega$ be a smooth bounded domain in $\R^N$, $t>1$,  and $h\in L^t(\Omega)$ with $\int_\Omega h =0$. Then there is a unique strong solution $u\in W^{2,t}(\Omega)$ of 
 \begin{align}\label{Nprob}
  -\Delta u = h \text{ in }\Omega, \qquad \partial_\nu u=0 \text{ on }\partial \Omega,\qquad \int_\Omega u = 0,
  \end{align}
 and there is $C=C(\Omega,t)>0$ such that $\|u\|_{W^{2,t}}\leq C\|h\|_t.$
\end{lemma}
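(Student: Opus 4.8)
The plan is to combine a variational existence argument valid for $h\in L^2(\Omega)$ with the Agmon--Douglis--Nirenberg $L^t$-estimates for the Neumann boundary value problem, and then to extend to all $t>1$ by density, absorbing the lower-order term via a compactness argument. (For $N=1$ the statement reduces to an elementary ODE computation, so assume $N\geq 2$.)

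\emph{Step 1: existence and $W^{2,t}$-regularity for $t\geq 2$.} Let $V:=\{v\in H^1(\Omega):\int_\Omega v=0\}$. By the Poincar\'e--Wirtinger inequality, the bilinear form $a(u,v):=\int_\Omega\nabla u\cdot\nabla v$ is coercive on $V$, and for $h\in L^2(\Omega)$ the functional $v\mapsto\int_\Omega hv$ is bounded on $V$; Lax--Milgram gives a unique $u\in V$ with $a(u,v)=\int_\Omega hv$ for all $v\in V$. Since $\int_\Omega h=0$, this identity extends to all $v\in H^1(\Omega)$, so $u$ is a weak solution of~\eqref{Nprob}. The Neumann condition satisfies the complementing (Lopatinski--Shapiro) condition, so the elliptic $L^t$-regularity up to the boundary of \cite{ADN59} (see also \cite{RR85}) applies: whenever $h\in L^t(\Omega)$ with $t\geq 2$ (so that $h\in L^2(\Omega)$ on the bounded domain $\Omega$), one has $u\in W^{2,t}(\Omega)$ with
\[
\|u\|_{W^{2,t}}\leq C\big(\|h\|_t+\|u\|_{L^t}\big)
\]
for $C=C(\Omega,t)>0$.

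\emph{Step 2: removing the lower-order term, and uniqueness.} I claim that in fact $\|u\|_{W^{2,t}}\leq C\|h\|_t$ for every strong solution $u\in W^{2,t}(\Omega)$ of~\eqref{Nprob}. If not, there are $u_n\in W^{2,t}(\Omega)$ with $\int_\Omega u_n=0$, $\|u_n\|_{W^{2,t}}=1$, $-\Delta u_n=h_n$, $\partial_\nu u_n=0$, and $\|h_n\|_t\to 0$. By the compactness of the embedding $W^{2,t}(\Omega)\hookrightarrow L^t(\Omega)$, up to a subsequence $u_n\to u$ in $L^t(\Omega)$; applying the estimate of Step 1 to $u_n-u_m$ gives $\|u_n-u_m\|_{W^{2,t}}\leq C(\|h_n-h_m\|_t+\|u_n-u_m\|_{L^t})\to 0$, so $u_n\to u$ in $W^{2,t}(\Omega)$, whence $\|u\|_{W^{2,t}}=1$, $\int_\Omega u=0$, and $u$ solves $-\Delta u=0$ in $\Omega$, $\partial_\nu u=0$ on $\partial\Omega$. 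A harmonic function on the connected smooth domain $\Omega$ with vanishing Neumann data is constant, and $\int_\Omega u=0$ then forces $u\equiv 0$, contradicting $\|u\|_{W^{2,t}}=1$. The same argument with $h\equiv 0$ yields uniqueness.

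\emph{Step 3: existence for $1<t<2$.} Put $h_n:=\tau_n h-|\Omega|^{-1}\int_\Omega\tau_n h$, where $\tau_n s:=\max\{-n,\min\{n,s\}\}$; then $h_n\in L^\infty(\Omega)\subset L^2(\Omega)$, $\int_\Omega h_n=0$, and $h_n\to h$ in $L^t(\Omega)$ by dominated convergence (the subtracted averages tend to $0$). By Step 1 there are strong solutions $u_n\in W^{2,2}(\Omega)\subset W^{2,t}(\Omega)$ of~\eqref{Nprob} with data $h_n$ and $\int_\Omega u_n=0$, and by Step 2, $\|u_n-u_m\|_{W^{2,t}}\leq C\|h_n-h_m\|_t\to 0$. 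Hence $u_n\to u$ in $W^{2,t}(\Omega)$; passing to the limit in the weak formulation shows that $u$ solves~\eqref{Nprob}, $\int_\Omega u=0$, and $\|u\|_{W^{2,t}}\leq C\|h\|_t$. Uniqueness was proved in Step 2.

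\emph{Main obstacle.} The genuinely non-elementary ingredient is the up-to-the-boundary $L^t$-estimate for the Neumann problem from \cite{ADN59,RR85} (i.e. verifying that Neumann boundary conditions form a complementing system), together with the compactness-and-contradiction argument needed to absorb the term $\|u\|_{L^t}$; once these are in hand, the variational existence step and the density argument are routine.
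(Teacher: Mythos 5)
The paper does not actually prove this lemma --- it defers entirely to \cite[Theorem and Lemma on p.~143]{RR85} and \cite[Theorem 15.2]{ADN59}. Your proposal correctly reconstructs the standard argument behind those citations: Lax--Milgram on the mean-zero subspace of $H^1(\Omega)$ for $L^2$ data, the Agmon--Douglis--Nirenberg $L^t$-estimate up to the boundary for the a priori bound, a compactness/contradiction argument to absorb the $\|u\|_{L^t}$ term (which simultaneously yields uniqueness), and a truncation/density argument to reach $1<t<2$. The one place you gloss over something nontrivial is in Step~1, in passing from the $H^1$ variational solution to $u\in W^{2,t}$ when $h\in L^t$ with $t>2$: the ADN a priori inequality only bounds the $W^{2,t}$-norm of a solution already known to lie in $W^{2,t}$, so to conclude membership one needs either a Sobolev bootstrap (starting from $u\in H^2$, using $W^{2,s}\hookrightarrow W^{1,s^*}$ and iterating) or a direct appeal to the ADN $L^t$ existence/solvability theory, which is precisely what \cite[Theorem~15.2]{ADN59} provides. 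Since you are already treating ADN as a black box this is a forgivable ellipsis, and the rest of the architecture --- in particular Steps~2 and~3, which only use the a priori estimate (valid for all $t>1$) together with Rellich--Kondrachov and the fact that a mean-zero harmonic function with zero Neumann data on a connected domain vanishes --- is sound.
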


The following lemma yields that weak solutions of \eqref{eq:LENSNeumann} are also classical ones for $p\in(1,2^*-1]$, being also a key point in the proof of the convergence in~\eqref{eq:convergence_C2a}.

\begin{lemma}\label{lemma:regularitysol} Let $N\geq 3$, $\Omega$ be a smooth bounded domain in $\R^N$ and let $h,  H\in L^\frac{N}{2}(\Omega)$ such that $|h|\leq H$ in $\Omega$. If $u\in H^1(\Omega)$ is a weak solution of the problem
\[
-\Delta u=h(x) u \text{ in } \Omega,\qquad \partial_\nu u=0 \text{ on }  \partial \Omega,
\]
then  $u\in L^t(\Omega)$ for every $t\geq 1$ and there exists   $C=C(H,\Omega,t,N)>0$ such that $\|u\|_{t}\leq C \|u\|_{2^*}$.
\end{lemma}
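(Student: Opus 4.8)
The plan is to run a Brezis–Kato/Moser iteration scheme, tailored to the Neumann problem. The starting point is that, since $u\in H^1(\Omega)$, we already have $u\in L^{2^*}(\Omega)$ by the Sobolev embedding, so the claim is really about bootstrapping integrability using the equation $-\Delta u = h(x)u$ with $h\in L^{N/2}$. First I would fix a large parameter $M>0$ and a truncation $u_M := \min\{|u|,M\}$, and test the weak formulation with the admissible function $\varphi = u\,|u_M|^{2\beta}$ for $\beta\geq 0$ (this is a legitimate $H^1$ test function because of the truncation, and no boundary term appears thanks to $\partial_\nu u = 0$). Expanding $\nabla\varphi$ and using Young's inequality on the cross term, the left-hand side controls $\int_\Omega |\nabla(u\,|u_M|^\beta)|^2$ up to a constant depending on $\beta$, while the right-hand side is $\int_\Omega h(x)\, u^2 |u_M|^{2\beta}= \int_\Omega h(x)\, |u\,|u_M|^\beta|^2$ (using $|u_M|\le|u|$ where needed).

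The key splitting is the standard one: write $h = h\mathbf{1}_{\{|h|>A\}} + h\mathbf{1}_{\{|h|\le A\}}$ for a threshold $A$ to be chosen. On the second piece $\int |h|\,|u|u_M|^\beta|^2 \le A\|u\,|u_M|^\beta\|_2^2$, which is absorbed into lower-order terms. On the first piece, by Hölder with exponents $N/2$ and $2^*/2$,
\[
\int_\Omega |h|\mathbf{1}_{\{|h|>A\}}\, |u\,|u_M|^\beta|^2 \le \Big(\int_{\{|h|>A\}} |h|^{N/2}\Big)^{2/N}\, \|u\,|u_M|^\beta\|_{2^*}^2 .
\]
Because $|h|\le H$ with $H\in L^{N/2}$ fixed, $\big(\int_{\{H>A\}} H^{N/2}\big)^{2/N} =: \epsilon(A)\to 0$ as $A\to\infty$, uniformly — crucially the threshold and hence the smallness is chosen once and depends only on $H$, not on $\beta$. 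Choosing $A$ so that $\epsilon(A)$ times the Sobolev constant is, say, $\tfrac12$, the gradient term controlling $\|u\,|u_M|^\beta\|_{2^*}^2$ absorbs this term, and one is left with
\[
\|u\,|u_M|^\beta\|_{2^*}^2 \le C(\beta,H,\Omega,N)\,\|u\,|u_M|^\beta\|_2^2 \le C(\beta,\dots)\,\||u|^{1+\beta}\|_2^2,
\]
where I must be careful to keep the Poincaré–Neumann inequality (constant depending on $\Omega$) available to estimate the full $H^1$ norm. Letting $M\to\infty$ by Fatou, this gives $\|u\|_{2^*(1+\beta)} \le C(\beta)^{1/(1+\beta)}\,\|u\|_{2(1+\beta)}$, i.e. the gain from $L^{2(1+\beta)}$ to $L^{2^*(1+\beta)}$. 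Iterating with $1+\beta_0 = 2^*/2$, $1+\beta_{k+1} = \tfrac{2^*}{2}(1+\beta_k)$ then reaches every finite exponent in finitely many steps, and tracking the constants shows $\|u\|_t \le C(H,\Omega,t,N)\|u\|_{2^*}$ for each fixed $t$.

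The main obstacle — and the reason the statement is phrased with the majorant $H$ rather than with $h$ directly — is precisely the uniformity of the smallness $\epsilon(A)$: if one only knew $h\in L^{N/2}$ the tail $\big(\int_{\{|h|>A\}}|h|^{N/2}\big)^{2/N}$ would still go to $0$, but in a sequence of solutions the $h_n$ vary, so one could not get constants uniform along the sequence; dominating by a fixed $H\in L^{N/2}$ fixes the threshold $A$ once and for all. A secondary technical point is justifying the test function $u\,|u_M|^{2\beta}\in H^1(\Omega)$ and the absence of boundary contributions — here the truncation keeps the function bounded and in $H^1$, and the Neumann condition is exactly what makes the integration by parts clean. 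Everything else (Young, Hölder, Sobolev, Fatou, the arithmetic of the iteration exponents) is routine.
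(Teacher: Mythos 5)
Your proposal runs the same Brezis--Kato iteration as the paper: both test with $u$ times a truncated power of $|u|$, split the potential by a large threshold, control the tail via $\epsilon(A)=(\int_{\{H\geq A\}}H^{N/2})^{2/N}\to 0$, apply H\"older, Sobolev, and the Poincar\'e--Wirtinger inequality, absorb the small term, and iterate $2(1+\beta)\to 2^*(1+\beta)$. One small imprecision: the smallness condition that determines the threshold also involves the $\beta$-dependent constant coming from comparing $\int\nabla u\cdot\nabla\varphi$ with $\int|\nabla(u|u_M|^\beta)|^2$, so the threshold $A$ must be allowed to depend on $\beta$ as well as on $H$ (exactly as in the paper, where $\eta$ is chosen so that $2S_N^2(1+C_P^2)C(s)\eps(\eta)=\tfrac12$); this is harmless since the final constant is permitted to depend on $t$, but your remark that it ``depends only on $H$, not on $\beta$'' is not quite right. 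The point you emphasize --- that dominating by a fixed $H\in L^{N/2}$ is what gives uniformity along sequences $h_n$ --- is indeed the reason the lemma is stated with a majorant and is correctly identified.
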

\begin{proof} We use the standard procedure introduced by Brezis and Kato \cite{BK79}, adapting the proof of \cite[Lemma B.2]{Struwebook} to a Neumann problem. 
\smallbreak
\noindent  1)  Since $u\in H^1(\Omega)$, the function belongs in particular to $L^{2^*}(\Omega)$.  Assume in general that $u\in L^{2s+2}(\Omega)$ for some $s> 0$. Given $L>0$, consider the $L^\infty$-function $v=v_L:=\min\{|u|^s,L\}$. Observe that $uv^2,uv\in H^1(\Omega)$, with
\begin{align*}
\int_\Omega \nabla u \nabla (uv^2)&=\int_\Omega v^2|\nabla u|^2+2s\int_{\{|u|^s<L\}}|u|^{2s}|\nabla u|^2,\\
	 \int_\Omega |\nabla (uv)|^2&=\int_\Omega v^2|\nabla u|^2+(2s+s^2)\int_\Omega |\nabla u|^2.
 \end{align*}
Then, using H\"{o}lder's inequality and $uv^2$ as a test function for the equation $-\Delta u=h(x) u$ we have, for any $\eta>0$, 
\begin{align*}
\int_\Omega |\nabla (uv) |^2 &\leq C(s) \int_\Omega \nabla u \nabla (uv^2)\leq C(s)\int_\Omega |h| u^2v^2\leq C(s)\int_\Omega H u^2v^2\\
&\leq C(s)(\int_{\{H< \eta\}} \eta|u|^{2s+2}+\int_{\{H\geq \eta\}} H u^2v^2)\\
&\leq  C(s) ( \eta \|u\|_{2s+2}^{2s+2} + \eps(\eta) \|uv\|_{\frac{2N}{N-2}}^2 ),
\end{align*}
where $\eps(\eta):=\left(\int_{\{H\geq \eta\}}H^\frac{N}{2}\right)^\frac{2}{N}$ and $C(s)>0$ is a constant depending only on $s$. This, combined with Sobolev's inequality, Poincar\'e-Wirtinger's  inequality ($\| w-\frac{1}{|\Omega|}\int_\Omega w\|_2\leq C_P\|\nabla w\|_2$ for every $w\in H^1(\Omega)$), and the fact that $|v|\leq |u|^s$ in $\Omega$, yields
\begin{align*}
\|uv\|_{\frac{2N}{N-2}}^2 &\leq 2\left\|uv-\frac{1}{|\Omega|}\int_\Omega uv\right\|_{\frac{2N}{N-2}}^2 + 2|\Omega|^{-\frac{N+2}{N}}\| uv\|_1^2\\
				&\leq 2S_N^2\left\|uv-\frac{1}{|\Omega|}\int_\Omega uv\right\|_{H^1}^2 + 2|\Omega|^{-\frac{N+2}{N}}\| uv\|_1^2 \\
				&\leq   2S_N^2(1+C_P^2)\int_\Omega |\nabla (uv) |^2+ 2|\Omega|^{-\frac{N+2}{N}}\| u\|_{1+s}^{2+2s}\\
				&\leq  (2 S_N^2 (1+C_P^2)C(s) \eta+ 2|\Omega|^{-2/N}) \|u\|_{2+2s}^{2+2s} +2S_N^2(1+C_P^2) C(s) \eps(\eta) \|uv\|_{\frac{2N}{N-2}}^2.
\end{align*}
If we choose $\eta>0$ sufficiently large so that $2S_N^2 (1+C_P^2) C(s) \eps(\eta)=\frac{1}{2}$, we get the existence of $\kappa=\kappa(H,\Omega,N,s)>0$ such that
\[
\|uv\|_{\frac{2N}{N-2}}^2 \leq \kappa \| u\|_{2+2s}^{2+2s}.
\]In particular, since $\kappa$ is independent of $L>0$, we have 
\begin{equation}\label{eq:iteration_ineq}
\|u\|_{\frac{(1+s)2N}{N-2}}\leq \kappa^\frac{1}{2+2s}\|u\|_{2+2s}.
\end{equation}
\smallbreak
\noindent 2) Consider the sequence defined by
\[
s_0:=\frac{2}{N-2},\qquad 2+2s_{k+1}:=\frac{(1+s_k)2N}{N-2} \iff s_{k+1}=\frac{N s_k+2}{N-2},\ k\in \N_0
\]
which is increasing and diverges to infinity. From the previous step we conclude that $u\in L^{\frac{(1+s_k)2N}{N-2}}(\Omega)$ for every $k\in \N_0$ and therefore $u\in L^t(\Omega)$ for every $t\geq 1$. The final estimate is a consequence of the iteration procedure together with~\eqref{eq:iteration_ineq}.
\end{proof}

\begin{prop}\label{prop:regularity}
Let $p\in (0,2^*-1]$, $\lambda>0$ and let $u$ be a weak solution of the problem
\[
-\Delta u=\lambda |u|^{p-1}u \text{ in } \Omega,\quad \partial_\nu u=0 \text{ on } \partial \Omega.
\]
Then $u\in C^{2,\alpha}(\overline \Omega)$ for every $\alpha\in (0,1)$.
\end{prop}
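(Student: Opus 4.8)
The plan is to prove $C^{2,\alpha}$-regularity by a bootstrap argument split according to whether $p$ makes the nonlinearity sub- or super-linear, using Lemma~\ref{l:reg} and Lemma~\ref{lemma:regularitysol} as the workhorses. First I would reduce to the case $N\geq 3$; for $N=1,2$ the Sobolev embedding $H^1(\Omega)\hookrightarrow L^t(\Omega)$ for all $t$ (or $H^1\hookrightarrow L^\infty$ when $N=1$) makes the argument strictly easier, so I would remark that and focus on $N\geq 3$.

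\textbf{Step 1: $L^\infty$ bound.} The key observation is that the equation can be written as $-\Delta u = h(x)u$ with $h(x):=\lambda|u|^{p-1}$ (interpreting $h=0$ where $u=0$). I claim $h\in L^{N/2}(\Omega)$. Indeed, since $u\in H^1(\Omega)\subset L^{2^*}(\Omega)$ and $p\leq 2^*-1$, we have $|u|^{p-1}\in L^{2^*/(p-1)}(\Omega)$, and $2^*/(p-1)\geq 2^*/(2^*-2) = N/2$ (with the convention that when $p\leq 1$, $h$ is even bounded, so certainly in $L^{N/2}$); thus $h\in L^{N/2}(\Omega)$. Taking $H:=h$, Lemma~\ref{lemma:regularitysol} applies and gives $u\in L^t(\Omega)$ for every $t\geq 1$.

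\textbf{Step 2: Bootstrap to $C^{1,\alpha}$ and then $C^{2,\alpha}$.} Once $u\in L^t(\Omega)$ for all $t$, the right-hand side $f:=\lambda|u|^{p-1}u$ satisfies $f\in L^t(\Omega)$ for all $t\geq 1$, and moreover $\int_\Omega f = 0$ by~\eqref{comp} (the Neumann compatibility condition). Therefore, after subtracting the mean of $u$ (which does not affect the equation) and adjusting by the additive constant, Lemma~\ref{l:reg} gives $u\in W^{2,t}(\Omega)$ for every $t>1$; choosing $t$ large and using the Sobolev embedding $W^{2,t}(\Omega)\hookrightarrow C^{1,\alpha}(\overline\Omega)$ yields $u\in C^{1,\alpha}(\overline\Omega)$ for all $\alpha\in(0,1)$. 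In particular $u$ is bounded and $|u|^{p-1}u$ is Hölder continuous: when $p\geq 1$ this is because $s\mapsto |s|^{p-1}s$ is locally Lipschitz, and when $0<p<1$ it is because $s\mapsto|s|^{p-1}s$ is globally $p$-Hölder, so composing with the Lipschitz function $u$ gives $f\in C^{0,\beta}(\overline\Omega)$ with $\beta=\min\{p,\alpha\}>0$. Schauder estimates for the Neumann problem (again \cite{RR85,ADN59}) then upgrade $u$ to $C^{2,\beta}(\overline\Omega)$; since $u\in C^2$ now has bounded gradient, a second pass shows $f\in C^{0,\alpha}$ for every $\alpha\in(0,1)$ when $p\geq 1$, and in the range $0<p<1$ one is content with $\alpha<p$ but in fact the statement for all $\alpha\in(0,1)$ still follows because $|u|^{p-1}u$ is as regular as $u$ away from its zero set and near the zero set $u\in C^{1}$ forces the desired Hölder bound — here one uses that $|u(x)|\leq \|\nabla u\|_\infty\,\dist(x,\{u=0\})$.

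\textbf{Main obstacle.} The delicate point is the regularity of the composition $x\mapsto |u(x)|^{p-1}u(x)$ in the sublinear range $p\in(0,1)$, where the nonlinearity is only Hölder (not Lipschitz) in $u$, so the naive bootstrap stalls at $C^{2,p}$. The resolution is the elementary but slightly technical estimate above: since $u\in C^{1,\alpha}(\overline\Omega)$, near a zero $x_0$ of $u$ one has $|u(x)|\lesssim |x-x_0|$, which forces $|u|^{p}$ to vanish fast enough that $|u|^{p-1}u$ is in fact $C^{0,\gamma}$ for $\gamma$ arbitrarily close to $1$ — so the Schauder bootstrap does close for every $\alpha\in(0,1)$. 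I would carry out this local computation carefully, as it is the only genuinely non-routine ingredient.
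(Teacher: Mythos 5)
Two steps in your argument fail for $p\in(0,1)$, which is exactly the range where the paper's proof and yours diverge. In Step 1 you set $h=\lambda|u|^{p-1}$ and assert that for $p\leq 1$ the function $h$ is ``even bounded.'' This is false for $p<1$: the exponent $p-1$ is negative, so $h$ blows up near the nodal set of $u$, and $h\in L^{N/2}(\Omega)$ amounts to a negative-power integrability of $u$ that is not known a priori. The paper invokes the Brezis--Kato lemma (Lemma~\ref{lemma:regularitysol}) \emph{only} at the critical exponent $p=2^*-1$, where $h=\lambda|u|^{4/(N-2)}\in L^{N/2}(\Omega)$ is legitimate because $u\in L^{2^*}(\Omega)$; for $0<p<2^*-1$ it runs instead a direct bootstrap via Lemma~\ref{l:reg} (iterate $u\in L^q \Rightarrow |u|^{p-1}u\in L^{q/p}\Rightarrow u\in W^{2,q/p}\hookrightarrow L^{q'}$ with $q'>q$), which is in fact the easiest route when $p<1$ since the nonlinearity is sublinear. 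Your uniform Brezis--Kato approach is fine for $p\in[1,2^*-1]$ (there $|u|^{p-1}\in L^{2^*/(p-1)}\subset L^{N/2}$ by Sobolev), but you should not extend it to $p<1$.

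The second and more serious problem is your resolution of the Schauder step in the sublinear range. You claim that $u\in C^{1,\alpha}(\overline\Omega)$ gives $|u(x)|\lesssim\dist(x,\{u=0\})$ and that this ``forces'' $|u|^{p-1}u$ to be $C^{0,\gamma}$ for $\gamma$ arbitrarily close to $1$. The logic is inverted: linear vanishing is precisely what \emph{caps} the H\"older exponent at $p$. The model case $u(x)=x_1$ (smooth, simple zero) gives $|u|^{p-1}u=|x_1|^p\sign(x_1)$, whose modulus of continuity at the origin is exactly $|x_1|^p$, so this function is in $C^{0,p}$ and in no $C^{0,\gamma}$ for $\gamma>p$; the same is true near any nondegenerate zero of a $C^1$ function. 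Consequently the bootstrap \emph{does} stall at $C^{2,p}$ for $p\in(0,1)$, and the proposed local computation cannot close the gap. (Indeed $C^{2,p}$ is the sharp regularity in this range; the paper's phrase ``for every $\alpha\in(0,1)$'' should be read as ``for some/the natural $\alpha$'' when $p<1$, and for $p\geq 1$ the map $t\mapsto|t|^{p-1}t$ is locally Lipschitz so the stated conclusion is correct as written.) In short, your structure is sensible and matches the paper's for $p\geq 1$, but both auxiliary claims you rely on in the sublinear case are false and must be removed.
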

\begin{proof}
If $0<p<2^*-1$, then a classical bootstrap method (using Lemma~\ref{l:reg} and Sobolev embeddings) shows that $u\in W^{2,t}(\Omega)$ for every $t\geq 1$. Thus, again by Sobolev embeddings, $u$ is H\"older continuous for any exponent $\alpha\in (0,1)$, and the Schauder estimate \cite[Theorem 6.31]{GT98} (see also the remark after the theorem) yields the desired regularity.

As for the case $p=2^*-1$, we have $-\Delta u=h(x) u$, for $h(x)=\lambda|u|^{\frac{4}{n-2}}\in L^\frac{N}{2}(\Omega)$. Then Lemma~\ref{lemma:regularitysol} implies that $u\in L^t(\Omega)$ for every $t\geq 1$, and Lemma~\ref{l:reg} shows that solutions are in $W^{2,t}(\Omega)$ for every $t\geq 1$, and we can conclude as before.
\end{proof}

\section{Existence and asymptotics for $p>0$}\label{sec:3}
The main purpose of this section is to prove Theorem \ref{th:NeumannLENS_0} and Theorem~\ref{th:NeumannLENS} (in the case $p>0$). Instead of using the functional $I_p$ defined in \eqref{I:def}, we rely on a dual method approach, combining  know facts for least-energy solutions proved in the subcritical case \cite{ST17} and in the critical one \cite{CK91} with a careful asymptotic analysis. We start by studying the associated level $D_p$ (recall \eqref{Dpintro}); transferring afterwards the results to $\Lambda_p$ (Theorem \ref{th:NeumannLENS_0}), which we relate with the least-energy level $L_p$ (Proposition \ref{prop:rel_Lp_Dp}). Then Theorem~\ref{th:NeumannLENS}, 2--4 and the equivalent characterizations of Proposition \ref{thm:varcha} follow as a straightforward result.

In this section we always assume that $N\geq 4$.

\subsection{The dual method}\label{dm:sec}
 
Following the notations in \cite{ST17}, we define 
\[
X_p = \left\{\, f\in L^\frac{{p}+1}{{p}}(\Omega)\::\: \int_\Omega f=0\, \right\}\qquad \text{for $p\in (0,2^*-1]$}.
\] 

In the following,
\begin{align*}
\text{$K:X_p\to W^{2,\frac{{p}+1}{{p}}}(\Omega)$ denotes the inverse zero-average Neumann Laplacian operator;}
\end{align*}
that is, for $h\in X_p$,  $u:=K h$ is the unique solution of $-\Delta u= h$ in $\Omega$, $u_\nu=0$ on $\partial \Omega$, and $\int_\Omega u=0$. 
 The operator $K$ is well defined and continuous by Lemma~\ref{l:reg}. Using the Rellich-Kondrachov compactness theorem and Lemma~\ref{l:reg}, we have that
\begin{align}
& \text{for $p<2^*-1$, the operator $K$ is compact from $L^\frac{{p}+1}{{p}}(\Omega)$ to $L^{{p}+1}(\Omega)$,} \label{K:com} \\
& \text{for $p=2^*-1$, $K$ is compact from $L^\frac{2N}{N+2}(\Omega)$ to $L^q(\Omega)$, for every $1\leq q<\frac{2N}{N-2}$.}\label{K:com2}
\end{align}

Let $\phi_p:X_p\to \R$ be the $C^1$-functional  given by
\begin{align}\label{phi:def}
\phi_p(f):=\frac{{p}}{{p}+1}\int_\Omega |f|^\frac{{p}+1}{{p}}-\frac{1}{2}\int_\Omega f K f,
\end{align}
whose derivative is
\[
\phi_p'(f)g=\int_\Omega |f|^{\frac{1}{{p}}-1}g f- \int g Kf\qquad \text{ for } g\in X_p.
\]Thus, at a critical point  $f$ of $\phi_p$, we have
\[
\phi_p(f)=\frac{{p}-1}{2({p}+1)}\int_\Omega fKf=\frac{{p}-1}{2({p}+1)}\int_\Omega |f|^{\frac{{p}+1}{{p}}}.
\]
For future reference, we observe that the operator
\begin{align}\label{eq:wc}
L^\frac{p+1}{p}(\Omega)\to \R;\  f\mapsto \int_\Omega fKf \qquad\text{ is weakly continuous for } p<2^*-1.
\end{align}
Following \cite{PW15, ST17},  define the map $c_p: L^p(\Omega)\to \R$ where, for each $w\in L^p(\Omega)$, $c_p(w)$ is the \textit{unique} real number such that $\int_\Omega |w|^{p-1}w=0$ (uniqueness follows from the strict monotonicity of the map $t\mapsto |t|^{{p}-1}t$ in $\R$). Then, as in \cite{ST17}, we define the (nonlinear) operator $K_p: X_p\to W^{2,\frac{{p}+1}{{p}}}(\Omega)$ by
\begin{align}\label{c:def}
K_p h= Kh+ c_p(Kh),
\end{align}
which by definition satisfies $\int_\Omega |K_p h|^{{p}-1}K_p h=0$.  Then $f$ is a critical point of $\phi_p$ if and only if $f$ is a weak solution of

\[
K_p f= |f|^{\frac{1}{{p}}-1}f\quad \text{ in }\Omega.
\]
(see \cite[Lemma 2.3]{ST17} for the details).

Inspired by \cite{CK91}, we use a dual version of $\Lambda_p$, which is of crucial importance in the symmetry results, see Section \ref{symm:sec} below. Let 
\begin{align}\label{D}
D_p:=\sup\left\{\int_\Omega fK f:\ f\in X_p,\ \|f\|_{\frac{{p}+1}{{p}}}=1\right\}\qquad \text{ for } p\in (0,2^*-1].
\end{align}

\begin{lemma}\label{lemma:bounds} 
Given $0<a<2^*-1$, there exists $\delta>0$ such that $D_p>\delta$ for every $p\in [a,2^*-1]$.
\end{lemma}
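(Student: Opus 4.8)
The plan is to produce a single test function, working uniformly in $p\in[a,2^*-1]$, that gives a positive lower bound on the Rayleigh-type quotient
\[
f\longmapsto \int_\Omega f\,Kf\quad\text{subject to }\int_\Omega f=0,\ \|f\|_{\frac{p+1}{p}}=1.
\]
First I would recall that $\int_\Omega f\,Kf>0$ for every nonzero $f\in X_p$: writing $u=Kf$ we have $-\Delta u=f$, $\partial_\nu u=0$, so $\int_\Omega f\,Kf=\int_\Omega |\nabla u|^2>0$ unless $u$ is constant, which forces $f=0$. Hence $D_p>0$ for each fixed $p$; the content of the lemma is the \emph{uniformity} in $p$. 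The natural candidate is to fix \emph{one} profile independent of $p$, e.g.\ the first nonconstant Neumann eigenfunction $\psi_1$ of~\eqref{varphi:intro}, which satisfies $\int_\Omega\psi_1=0$ (so $\psi_1\in X_p$ for all $p$) and $K\psi_1=\mu_1^{-1}\psi_1$, whence $\int_\Omega\psi_1 K\psi_1=\mu_1^{-1}\|\psi_1\|_2^2=\mu_1^{-1}>0$.

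The second step is to normalize. For $p\in[a,2^*-1]$ set $f_p:=\psi_1/\|\psi_1\|_{\frac{p+1}{p}}$, which is admissible for the supremum defining $D_p$, so
\[
D_p\;\geq\;\int_\Omega f_p K f_p\;=\;\frac{1}{\mu_1\,\|\psi_1\|_{\frac{p+1}{p}}^2}.
\]
It remains to bound $\|\psi_1\|_{\frac{p+1}{p}}$ from above uniformly in $p\in[a,2^*-1]$. The exponents $q_p:=\frac{p+1}{p}$ range over the compact interval $[\,\frac{2^*}{2^*-1},\,\frac{a+1}{a}\,]\subset(1,\infty)$ as $p$ ranges over $[a,2^*-1]$; since $\psi_1\in C^\infty(\overline\Omega)$ is bounded and $|\Omega|<\infty$, we have $\|\psi_1\|_{q}\leq \max\{1,|\Omega|\}\,\|\psi_1\|_{L^\infty(\Omega)}=:M$ for every $q\geq 1$ (indeed $\|\psi_1\|_q^q\le \|\psi_1\|_\infty^q|\Omega|$, and one may also invoke that $q\mapsto\|\psi_1\|_q$ is continuous on $[1,\infty]$, hence bounded on the compact set of relevant exponents). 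Therefore
\[
D_p\;\geq\;\frac{1}{\mu_1 M^2}\;=:\;\delta\;>\;0\qquad\text{for all }p\in[a,2^*-1],
\]
and $\delta$ depends only on $\Omega$ and $a$ (through $\mu_1$, $M$, and the range of exponents), which proves the claim.

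There is essentially no serious obstacle here: the only point requiring a word of care is the uniformity of $\|\psi_1\|_{\frac{p+1}{p}}$, and that is immediate once one observes the conjugate exponents $\frac{p+1}{p}$ stay in a compact subset of $(1,\infty)$ bounded away from $1$ precisely because $p$ is bounded away from $0$ (this is why the hypothesis $p\geq a>0$ is needed). One could alternatively avoid eigenfunctions altogether and take any fixed smooth $g\in C^\infty(\overline\Omega)$ with $\int_\Omega g=0$ and $g\not\equiv0$, using $\int_\Omega g\,Kg=\|\nabla Kg\|_2^2>0$; the argument is identical. Using $\psi_1$ is cleanest because it makes $\int_\Omega\psi_1 K\psi_1$ explicit. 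If a $p$-uniform lower bound extending to $p\searrow 0$ were wanted it would genuinely fail, which is consistent with the singular behavior of $L_p$ at small $p$ discussed after Proposition~\ref{prop:rel_Lp_Dp}; the restriction to $[a,2^*-1]$ with $a>0$ is therefore both natural and sharp for this statement.
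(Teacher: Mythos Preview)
Your proof is correct and follows essentially the same approach as the paper: use the first nonconstant Neumann eigenfunction $\psi_1$ as a test function, compute $\int_\Omega \psi_1 K\psi_1 = \mu_1^{-1}$, and normalize to obtain $D_p\geq (\mu_1\|\psi_1\|_{\frac{p+1}{p}}^2)^{-1}$. You actually spell out the uniform upper bound on $\|\psi_1\|_{\frac{p+1}{p}}$ more carefully than the paper does. One small aside: your closing remark that the bound ``would genuinely fail'' as $p\searrow 0$ is not quite accurate for this particular test function, since $\|\psi_1\|_{q}\to\|\psi_1\|_\infty<\infty$ as $q\to\infty$; the hypothesis $p\geq a>0$ is convenient here but not strictly needed for this argument, though it is essential in the places the lemma is applied.
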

\begin{proof}
Let $0<a<2^*-1$ and let $\psi_1$ be an $L^{2}$-normalized eigenfunction with zero average associated to the first nonzero Neumann eigenvalue. Then $\zeta_1=\psi_1/\|\psi_1\|_{\frac{p+1}{p}}$ satisfies $-\Delta \zeta_1=\mu_1 \zeta_1$  in $\Omega$, $\|\zeta_1\|_\frac{p+1}{p}=1$, $\int_\Omega \zeta_1=0,$
and $0<\delta<(\mu_1 \|\psi_1\|^2_\frac{p+1}{p})^{-1} =\int_\Omega \zeta_1 K \zeta_1\leq D_p$ for $p\in[a,2^*-1]$.
\end{proof}

\begin{lemma}\label{lemma:CK91}
Let $p\in (0,2^*-1]$ and let $f_k$ be a maximizing sequence for $D_{p}$, that is,
\begin{align*}
f_k\in X_{p},\qquad \|f_k\|_{\frac{p+1}{p}}=1\quad \text{ for all } k\in\N,\qquad \int_\Omega f_k K f_k \to D_{p}\quad \text{ as }k\to\infty.
\end{align*}
There exists $f\in X_p$ such that (up to a subsequence) $f_k \to f$ \emph{strongly} in $L^\frac{p+1}{p}(\Omega)$ as $k\to\infty$.
\end{lemma}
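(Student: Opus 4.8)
The statement is the standard "no loss of compactness along a maximizing sequence" result for $D_p$, which we need both for the subcritical range and — crucially — for $p=2^\ast-1$. For $p<2^\ast-1$ it is essentially soft: a maximizing sequence $(f_k)$ is bounded in $L^{(p+1)/p}(\Omega)$, hence (after a subsequence) $f_k\rightharpoonup f$ weakly; by \eqref{eq:wc} the functional $f\mapsto\int_\Omega f Kf$ is weakly continuous, so $\int_\Omega f K f = D_p>0$ (using Lemma~\ref{lemma:bounds}), in particular $f\neq 0$. Weak lower semicontinuity of the norm gives $\|f\|_{(p+1)/p}\leq 1$, and if the inequality were strict then $f/\|f\|_{(p+1)/p}$ would be admissible with quotient $D_p/\|f\|^2_{(p+1)/p}>D_p$, a contradiction; hence $\|f\|_{(p+1)/p}=1$. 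Since $L^{(p+1)/p}(\Omega)$ is uniformly convex, weak convergence together with convergence of norms upgrades to strong convergence $f_k\to f$ in $L^{(p+1)/p}(\Omega)$. That is the easy half.

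**The critical case.** The delicate point is $p=2^\ast-1$, where $K$ is only compact from $L^{2N/(N+2)}(\Omega)$ into $L^q(\Omega)$ for $q<2^\ast$ (by \eqref{K:com2}), not into $L^{2^\ast}(\Omega)$, so weak continuity of $f\mapsto\int f Kf$ is lost. This is exactly the concentration-compactness obstruction and is where the hypothesis $N\geq 4$ (equivalently $2N/(N+2)\geq$ the value making the estimate \eqref{nlt4} work) enters. The plan is to follow the Comte–Knaap argument from \cite{CK91}: write $g_k:=Kf_k$; since $f_k$ is bounded in $L^{2N/(N+2)}$, $g_k$ is bounded in $W^{2,2N/(N+2)}(\Omega)\hookrightarrow W^{1,2^\ast}$-type spaces, so $g_k\rightharpoonup g=Kf$ and $g_k\to g$ strongly in every $L^q$ with $q<2^\ast$ and a.e. Then $\int_\Omega f_k K f_k=\int_\Omega f_k g_k$, and one writes $f_k = f + (f_k-f)$, $g_k = g + (g_k-g)$ and expands. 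The cross terms $\int (f_k-f)g$ and $\int f (g_k-g)$ vanish (weak convergence against a fixed $L^{2^\ast}$, resp. $L^{2N/(N+2)}$, element). The surviving "bubble" contribution is $\int_\Omega (f_k-f)(g_k-g)= \int_\Omega (f_k-f)K(f_k-f) + o(1)$, and the key quantitative input is a Brezis–Lieb type splitting together with the sharp Neumann Sobolev constant: one shows
\[
D_p \;=\; \lim_k\int_\Omega f_k K f_k \;=\; \int_\Omega f K f \;+\; \lim_k \int_\Omega (f_k-f)K(f_k-f),
\]
while the norms split as $1=\|f_k\|_{(p+1)/p}^{(p+1)/p}=\|f\|^{(p+1)/p}+\|f_k-f\|^{(p+1)/p}+o(1)$. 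If mass escaped into the bubble, the corresponding Rayleigh quotient would be bounded by the best constant $S_N$ for the critical Sobolev embedding on $\R^N$ (half of which is the relevant threshold because bubbles can concentrate on the boundary under Neumann conditions), and the estimate \eqref{nlt4} — valid only for $N\geq 4$ — shows $D_p$ is strictly larger than that threshold. Hence the bubble carries zero mass, $\|f_k-f\|_{(p+1)/p}\to 0$, which is the claimed strong convergence (and incidentally forces $\|f\|_{(p+1)/p}=1$, $\int f K f = D_p$).

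**Main obstacle.** The routine parts — boundedness, weak limits, Brezis–Lieb splitting of the $L^{(p+1)/p}$ norm, the algebraic expansion of $\int f_k g_k$ — are standard; the real work is the strict inequality $D_{2^\ast-1} > (\text{threshold governed by } S_N)$, i.e. verifying that a test function (a rescaled Aubin–Talenti bubble suitably cut off and placed at an interior or boundary point of $\Omega$, with its average subtracted so it lies in $X_p$) beats the critical constant; this is precisely where one invokes the estimate \eqref{nlt4} and the dimensional restriction $N\geq 4$. I would therefore organize the proof as: (i) the soft compactness argument covering $p<2^\ast-1$ in full; (ii) for $p=2^\ast-1$, reduce via the expansion above to showing the bubble is trivial; (iii) cite/adapt the strict-inequality estimate from \cite{CK91} (our \eqref{nlt4}) to exclude concentration, and conclude strong $L^{(p+1)/p}$ convergence in all cases $p\in(0,2^\ast-1]$.
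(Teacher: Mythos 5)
Your subcritical argument coincides with the paper's (weak convergence, weak continuity of $f\mapsto \int fKf$ via~\eqref{eq:wc}, normalization to show $\|f\|_{(p+1)/p}=1$, then uniform convexity), and your overall strategy for $p=2^*-1$ — Brezis--Lieb decomposition, expansion of the quadratic form to isolate the bubble energy, and the quantitative inequality~\eqref{nlt4} (i.e. $2^{2/N}/(S\,D_p)<1$ for $N\geq 4$, quoted from \cite{CK91}) to exclude concentration — is also the one the paper follows, modulo one serious omission.

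\textbf{The gap.} In the critical case you invoke the Brezis--Lieb splitting
\[
1=\|f_k\|_{(p+1)/p}^{(p+1)/p}=\|f\|_{(p+1)/p}^{(p+1)/p}+\|f_k-f\|_{(p+1)/p}^{(p+1)/p}+o(1),
\]
but Brezis--Lieb requires $f_k\to f$ \emph{almost everywhere}, and you have only established weak convergence $f_k\rightharpoonup f$ in $L^{(p+1)/p}(\Omega)$; weak convergence does not imply a.e.\ convergence, and your sketch gives no mechanism to obtain it. This is precisely the technical issue the paper confronts by first applying Ekeland's variational principle: it replaces $(f_k)$ by a nearby maximizing sequence $(g_k)$ (with $\|g_k-f_k\|_{(p+1)/p}\to 0$) satisfying an approximate Euler--Lagrange relation
\[
Kg_k - c_k - \lambda_k\,|g_k|^{\frac{1}{p}-1}g_k \longrightarrow 0 \quad\text{in } L^{p+1}(\Omega),
\]
with $\lambda_k\to D_p\neq 0$ and $c_k\to c_0$. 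Since $Kg_k$ converges a.e.\ (up to a subsequence) by the compactness~\eqref{K:com2} of $K$ into $L^q$ for $q<2^*$, this forces $|g_k|^{\frac{1}{p}-1}g_k$, and hence $g_k$, to converge a.e.; only then does Brezis--Lieb become available. Without some substitute for this step — Ekeland or an equivalent device producing pointwise convergence — your argument does not close. A secondary, more cosmetic imprecision: the relevant threshold in the dichotomy is the Cherrier constant $2^{2/N}/S$ for boundary-concentrating bubbles, which is not ``half of $S_N$'' except in the large-$N$ limit; the precise inequality used is $2^{2/N}/(S\,D_p)<1$.
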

\begin{proof} 
Let $f_k$ be as in the statement and let $p\in(0,2^*-1]$. We split the proof in two cases.

\textbf{Subcritical case $0<p<2^*-1$:}  
Then there is $f\in L^\frac{{p}+1}{{p}}(\Omega)$ such that (up to a subsequence) $f_k\rightharpoonup f$ weakly in $L^\frac{{p}+1}{{p}}(\Omega)$, and $K f_k \to K f$ strongly in $L^{{p}+1}(\Omega)$, as $k\to \infty$, by~\eqref{K:com}. Thus $\|f\|_{\frac{{p}+1}{{p}}}\leq 1$ and $\int_\Omega f K f=D_p>0$ (see \eqref{eq:wc}); hence $f\neq 0$, and
\[
\frac{1}{\|f\|^2_{\frac{{p}+1}{{p}}}}D_p \leq  \int_\Omega \frac{f}{\|f\|_{\frac{{p}+1}{{p}}}} K \left(\frac{f}{\|f\|_{\frac{{p}+1}{{p}}}}\right)  \leq D_p,
\]
so that $\|f\|_{\frac{{p}+1}{{p}}}=1$. Since $\frac{p+1}{p}>1$, then $L^\frac{p+1}{p}(\Omega)$ is a uniformly convex Banach space and so weak convergence together with convergence of the norms implies that $f_k$ converges strongly to $f$ (see for instance \cite[Proposition 3.32]{Brezis}).

\smallbreak

\textbf{Critical case $p=2^*-1$:} 

 Define 
\[
F(f)=\frac{1}{2}\int_\Omega fKf,\qquad G(f)=\frac{p}{p+1}\int_\Omega |f|^{\frac{p+1}{p}}.
\] 
By Ekeland's variational principle (see \cite[Theorem~1.1 and Theorem~3.1]{Ekeland}), there exists $\lambda_k\in \R$ and $g_k\in X_{p}$ with 
\begin{align*}
\|g_k\|_{\frac{p+1}{p}}=1      
\end{align*}
 such that (up to a subsequence)
\begin{align}\label{ekeland}
\|g_k-f_k\|_{\frac{p+1}{p}}\to 0,\qquad    F(g_k)\to \frac{1}{2}D_{p},\qquad F'(g_k)-\lambda_k G'(g_k)\to 0 \text{ in } X_{p}^*,
\end{align}
as $k\to\infty$.
 
Since $g_k$ is bounded in $L^\frac{p+1}{p}(\Omega)$, there is $g_0\in L^\frac{p+1}{p}(\Omega)$ such that (up to a subsequence) $g_k\rightharpoonup g_0$ weakly in  $L^\frac{p+1}{p}(\Omega)$  as $k\to\infty$. Our aim is to show that this convergence is, in fact, strong. Having this in mind, we start by claiming that
\begin{align}\label{ae}
g_k \to g_0\qquad \text{ a.e. in }\Omega  \text{ as $k\to\infty$.}
\end{align}
 By the boundedness of $g_k$, we have
\[
o(1)=F'(g_k)g_k-\lambda_k G'(g_k)g_k= \int_\Omega g_k K g_k -\lambda_k \int_\Omega |g_k|^{\frac{p+1}{p}}= D_{p} -\lambda_k  + o(1),
\]
therefore
\begin{align}\label{lambda:c}
\lambda_k\to D_{p}\neq 0. 
\end{align}
Moreover, by Lemma~\ref{lemma:regularitysol}, there is $C(\Omega,n)>0$ such that $\|Kg_k\|_{W^{2,\frac{p+1}{p}}(\Omega)}\leq C \|g_k\|_{\frac{p+1}{p}}=C$ and therefore, by~\eqref{K:com2}, there is $w\in L^\frac{p+1}{p}(\Omega)$ such that (up to a subsequence)
\begin{align}\label{K:con}
Kg_k\to w\quad \text{ in }L^q(\Omega)\text{ for any }1<q<\frac{p+1}{p} \text{ and pointwisely a.e. in $\Omega$} \text{ as }k\to\infty.
\end{align}

Let $\psi\in L^\frac{p+1}{p}(\Omega)$ and $\varphi:= \psi-\frac{1}{|\Omega|}\int_\Omega \psi$. By the triangle inequality and H\"older's inequality, we have that
\begin{align}\label{cota}
\| \varphi\|_\frac{p+1}{p} \leq \| \varphi\|_\frac{p+1}{p}+\Big(|\Omega|^{-1}\int_\Omega \psi\Big)\|1\|_\frac{p+1}{p}=\| \psi \|_\frac{p+1}{p} +|\Omega|^{-\frac{1}{p+1}}\int_\Omega |\psi|\leq 2\| \psi \|_\frac{p+1}{p}. 
\end{align}
Set $c_k:= \lambda_k|\Omega|^{-1} \int_\Omega |g_k|^{\frac{1}{p}-1}g_k$, and observe that $(c_k)_{k\in\N}$ is bounded in $\R$, so (up to a subsequence) there is $c_0\in\R$ such that
\begin{align}\label{c0}
 c_k\to c_0\qquad \text{ as }k\to \infty.
\end{align}
Then,  by~\eqref{cota} and since $K g_k$ has zero average and $\varphi\in X_{p}$,
\begin{align}
 \int_\Omega  \psi Kg_k + \int_\Omega c_k\psi &- \int_\Omega \lambda_k  |g_k|^{\frac{1}{p}-1}g_k\psi = \int_\Omega \varphi K g_k - \lambda_k \int_\Omega |g_k|^{\frac{1}{p}-1}g_k \varphi\nonumber\\
 &\leq  \|F'(g_k)-\lambda_k G'(g_k)\|_{X_{p}^*} \|\varphi\|_{\frac{p+1}{p}}\leq 2 \|F'(g_k)-\lambda_k G'(g_k)\|_{X_{p}^*} \|\psi\|_{\frac{p+1}{p}}.\label{cota2}
\end{align}
Then, by~\eqref{cota2} and using that $(L^{\frac{p+1}{p}}(\Omega))^*$ is isometrically isomorphic to $L^{p+1}(\Omega)$,
\begin{align}
\|Kg_k-c_k-\lambda_k  |g_k|^{\frac{1}{p}-1}g_k  \|_{p+1}&=\|Kg_k-c_k-\lambda_k  |g_k|^{\frac{1}{p}-1}g_k  \|_{(L^{\frac{p+1}{p}}(\Omega))^*}\nonumber\\
&=\sup_{\psi\in L^{\frac{p+1}{p}}(\Omega)\backslash\{0\} }\frac{|\int_\Omega  (Kg_k + c_k - \lambda_k  |g_k|^{\frac{1}{p}-1}g_k)\psi|}{\|\psi\|_{\frac{p+1}{p}}}\nonumber\\
&\leq 2\|F'(g_k)-\lambda_k G'(g_k)\|_{X_{p}^*}\to 0.\label{mi}
\end{align}
Thus, by~\eqref{ekeland},~\eqref{lambda:c},~\eqref{K:con},~\eqref{c0}, and~\eqref{mi}, we have that $g_k$ must converge a.e. to some function, which is necessarily $g_0$.
\smallbreak

Summarizing the above arguments, we know that $(g_k)$ is a maximizing sequence for $D_{p}$ satisfying
\[
\|g_k-f_k\|_{\frac{p+1}{p}}\to 0, \qquad \|g_k\|_{\frac{p+1}{p}}=1 \quad \text{ for all } k\in\N,\qquad  g_k \to g_0  \text{ a.e. in } \Omega \text{  as $k\to\infty$}.
\]
Write $g_k=g_0+w_k$, with $w_k\rightharpoonup 0$ in $L^\frac{p+1}{p}(\Omega)$  as $k\to\infty$. Then by Brezis-Lieb's Lemma \cite{BL83} (which we can apply due to the a.e. convergence),
\[
\|g_0\|_{\frac{p+1}{p}}^{\frac{p+1}{p}} + \|w_k\|_{\frac{p+1}{p}}^{\frac{p+1}{p}}=1+\text{o}(1),
\]
and then, since $\frac{p+1}{p}<2$ (because $p=2^*-1>1$),
\begin{align*}
 \|g_0\|_{\frac{p+1}{p}}^{2} + \|w_k\|_{\frac{p+1}{p}}^{2}\leq 1+\text{o}(1).
\end{align*}
After this point, we follow closely \cite[p. 1143]{CK91}. Since $g_k$ is a maximizing sequence for $D_p$ and $g_k\rightharpoonup g_0$, $w_k\rightharpoonup 0$, in $L^\frac{p+1}{p}(\Omega)$  as $k\to\infty$, we have
\begin{align*}
D_p+o(1)&=\int_\Omega g_k K g_k=\int_\Omega g_0Kg_0+\int_\Omega w_k Kw_k+\int_\Omega g_0Kw_k+\int_\Omega w_k Kg_0\\
		&=\int_\Omega g_0Kg_0+\int_\Omega w_k Kw_k+o(1)
\end{align*}
and
\begin{align*}
 \int_\Omega g_0 K g_0 + \int_\Omega w_k K w_k = D_p + o(1)\geq D_p(\|g_0\|_{\frac{p+1}{p}}^{2} + \|w_k\|_{\frac{p+1}{p}}^{2}) +o(1).
\end{align*}
By the definition of $D_p$, we have $\int_\Omega g_0 K g_0\leq D_p \|g_0\|_{\frac{p+1}{p}}^2$ and deduce that
\begin{align}\label{38}
D_p \|w_k\|_{\frac{p+1}{p}}^2\leq \int_\Omega w_k K w_k +o(1).
\end{align}
Let $h_k:= K w_k$, by the continuity of $K:L^\frac{p+1}{p}(\Omega)\to H^1(\Omega)$,
\begin{align*}
 h_k\rightharpoonup 0\quad  \text{ in }H^1(\Omega),\qquad 
 h_k \to 0\quad \text{ in }L^2(\Omega),\qquad 
 h_k\rightharpoonup 0\quad  \text{ in }L^{p+1}(\Omega)\quad \text{ as $k\to\infty$}.
\end{align*}

By Cherrier's inequality (see \cite{Ch84} or \cite[equation 3.1]{CK91}) together with H\"older's inequality,
\begin{align*}
 \int_\Omega w_k K w_k \leq \|w_k\|_{\frac{p+1}{p}}\|h_k\|_{p+1} \leq \|w_k\|_{\frac{p+1}{p}}
 \left(
 \frac{2^\frac{2}{N}}{S}+\eps
 \right)^\frac{1}{2}\|\nabla h_k\|_2 + o(1),
\end{align*}
where $S$ is the best Sobolev constant for the embedding $H_0^1(\Omega)\hookrightarrow L^{p+1}(\Omega)$. Then, by~\eqref{38},
\begin{align*}
 D_p \|w_k\|_\frac{p+1}{p}\leq 
 \left(
 \frac{2^\frac{2}{N}}{S}+\eps
 \right)^\frac{1}{2}
 \|\nabla h_k\|_2+o(1).
\end{align*}

By the definition of $D_p$ we know that $\|\nabla h_k\|_2^2=\int_\Omega w_k K w_k  \leq D_p \|w_k\|_{\frac{p+1}{p}}^2$ and therefore
\begin{align*}
 D_p \|w_k\|_\frac{p+1}{p}\leq 
 \left(
 \frac{2^\frac{2}{N}}{S}+\eps
 \right)^\frac{1}{2}
 D_p^\frac{1}{2} \|w_k\|_{\frac{p+1}{p}}+o(1),
\end{align*}
that is,
\begin{align}\label{nlt4}
 \|w_k\|_\frac{p+1}{p}\left(1-\left(
 \frac{2^\frac{2}{N}}{S\, D_p}+\frac{\eps}{D_p}
 \right)^\frac{1}{2}
 \right)\leq 
 o(1).
\end{align}
By \cite[Lemma 2.2]{CK91} we have that $\frac{2^\frac{2}{N}}{S\, D_p}<1$ if $N\geq 4$, and therefore, choosing $\eps>0$ small enough, we deduce that $\|w_k\|_\frac{p+1}{p}=o(1)$ as $k\to\infty$. As a consequence, $g_k\to g_0$ strongly in $L^\frac{p+1}{p}(\Omega)$ as $k\to\infty$.  Finally, since $\|f_k-g_k\|_{\frac{p+1}{p}}\to 0$, we finally conclude that (up to a subsequence) $f_k\to g_0$ in $L^{\frac{p+1}{p}}(\Omega)$.
\end{proof}

\begin{lemma}\label{lemma:D} For every $p\in (0,2^*-1]$ the following holds true. 
\begin{itemize}
\item[i)] There exists $f\in X_p$ with $\| f\|_{\frac{{p}+1}{{p}}}=1$ such that $\int_\Omega f Kf=D_p$.
\item[ii)] Let $f$ be as in i), then 
\[
K_p f=D_p |f|^{\frac{1}{{p}}-1}f \qquad \text{ a.e. in } \Omega.
\]  
\end{itemize}
\end{lemma}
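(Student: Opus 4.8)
The plan is to read off part (i) directly from the compactness result Lemma~\ref{lemma:CK91}, and then to derive part (ii) from the associated Euler--Lagrange equation; the only genuinely delicate bookkeeping will be to check that the additive constant produced by the Lagrange multiplier argument is exactly the one that the normalization in $K_p$ requires.

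\textbf{Step 1: existence of a maximizer.} Let $(f_k)\subset X_p$ be a maximizing sequence for $D_p$, i.e. $\|f_k\|_{\frac{p+1}{p}}=1$ and $\int_\Omega f_k K f_k\to D_p$ (and $D_p>0$, by the explicit lower bound in the proof of Lemma~\ref{lemma:bounds}). By Lemma~\ref{lemma:CK91}, after passing to a subsequence, $f_k\to f$ strongly in $L^{\frac{p+1}{p}}(\Omega)$. Since $X_p$ is the kernel of the continuous functional $g\mapsto\int_\Omega g$ it is closed, so $f\in X_p$, and the strong convergence gives $\|f\|_{\frac{p+1}{p}}=1$, in particular $f\neq 0$. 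Moreover $K:L^{\frac{p+1}{p}}(\Omega)\to L^{p+1}(\Omega)$ is continuous (Lemma~\ref{l:reg} together with a Sobolev embedding, valid in both the subcritical and the critical range), hence $Kf_k\to Kf$ in $L^{p+1}(\Omega)$ and, by H\"older, $\int_\Omega f_k Kf_k\to\int_\Omega fKf$. Therefore $\int_\Omega fKf=D_p$, proving (i).

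\textbf{Step 2: the Euler--Lagrange equation.} Set $F(f)=\tfrac12\int_\Omega fKf$ and $G(f)=\tfrac{p}{p+1}\int_\Omega|f|^{\frac{p+1}{p}}$, both $C^1$ on $X_p$, with $F'(f)g=\int_\Omega (Kf)g$ (by symmetry of $K$, which follows from integration by parts) and $G'(f)g=\int_\Omega|f|^{\frac1p-1}fg$ for $g\in X_p$. Since $f$ maximizes $F$ on $\{G=\tfrac{p}{p+1}\}$ and $G'(f)f=\|f\|_{\frac{p+1}{p}}^{\frac{p+1}{p}}=1\neq0$, the Lagrange multiplier rule provides $\lambda\in\R$ with $\int_\Omega (Kf)g=\lambda\int_\Omega |f|^{\frac1p-1}fg$ for all $g\in X_p$. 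Choosing $g=f$ and using $\int_\Omega|f|^{\frac{p+1}{p}}=1$ yields $\lambda=\int_\Omega fKf=D_p$. Thus $\int_\Omega\bigl(Kf-D_p|f|^{\frac1p-1}f\bigr)g=0$ for every $g\in L^{\frac{p+1}{p}}(\Omega)$ with $\int_\Omega g=0$; since the annihilator of $X_p$ in $(L^{\frac{p+1}{p}}(\Omega))^\ast\cong L^{p+1}(\Omega)$ is spanned by the constants, there is $c\in\R$ with $Kf-c=D_p\,|f|^{\frac1p-1}f$ a.e.\ in $\Omega$.

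\textbf{Step 3: matching the constant.} It remains to show $c_p(Kf)=-c$, i.e.\ $\int_\Omega|Kf-c|^{p-1}(Kf-c)=0$. Writing $f=|f|\sign(f)$ and $w:=|f|^{\frac1p-1}f=|f|^{\frac1p}\sign(f)$, one computes $|w|^{p-1}w=|f|^{\frac{p-1}{p}}|f|^{\frac1p}\sign(f)=f$, hence
\[
\int_\Omega|Kf-c|^{p-1}(Kf-c)=D_p^{\,p}\int_\Omega|w|^{p-1}w=D_p^{\,p}\int_\Omega f=0,
\]
because $f\in X_p$. By the strict monotonicity of $t\mapsto|t|^{p-1}t$ the constant in the definition of $c_p$ is unique, so $c_p(Kf)=-c$ and therefore $K_pf=Kf+c_p(Kf)=Kf-c=D_p|f|^{\frac1p-1}f$ a.e.\ in $\Omega$, which is (ii). (Note $Kf\in L^{p+1}(\Omega)\subset L^p(\Omega)$, so $c_p(Kf)$ is well defined.)

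\textbf{Expected main obstacle.} The substantial analytic work, namely the strong $L^{\frac{p+1}{p}}$ convergence of maximizing sequences---and especially the critical case, where the inequality $2^{2/N}/(S D_p)<1$ for $N\ge4$ is used via Cherrier's inequality---is already contained in Lemma~\ref{lemma:CK91}. Given that, the one point that needs care is Step~3: confirming that the additive constant from the Lagrange multiplier argument is precisely the one enforcing the compatibility condition $\int_\Omega|K_pf|^{p-1}K_pf=0$, which is exactly what the algebraic identity $|w|^{p-1}w=f$ delivers.
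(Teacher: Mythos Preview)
Your proof is correct and follows essentially the same route as the paper: part~(i) is obtained from the strong $L^{\frac{p+1}{p}}$ convergence of maximizing sequences (Lemma~\ref{lemma:CK91}), and part~(ii) via Lagrange multipliers followed by the identification of the additive constant using $\int_\Omega|Kf+c|^{p-1}(Kf+c)=D_p^{\,p}\int_\Omega f=0$. The only cosmetic differences are that you determine $\lambda=D_p$ before identifying the constant (the paper does it afterwards), and you phrase the passage from test functions in $X_p$ to all of $L^{\frac{p+1}{p}}(\Omega)$ via the annihilator of $X_p$, whereas the paper subtracts the mean explicitly.
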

\begin{proof} 
\noindent i) This is a direct consequence of Lemma \ref{lemma:CK91}.

\smallbreak

\noindent ii) Let $f\in X_p$ be such that $\|f\|_{\frac{{p}+1}{{p}}}=1$ and $\int_\Omega f K f= D_p$. Then there is a Lagrange multiplier $\lambda\in \R$ such that
\[
\int_\Omega \varphi K f=\lambda \int_\Omega |f|^{\frac{1}{{p}}-1}f\varphi \qquad \text{ for all }\varphi \in X_p.
\]
Then, if $\psi\in L^{\frac{{p}+1}{{p}}}(\Omega)$, we have $\varphi=\psi-|\Omega|^{-1} \int_\Omega \psi\in X_p$, and hence
\[
\int_\Omega \psi K f + \frac{\lambda}{|\Omega|} \int_\Omega |f|^{\frac{1}{{p}}-1}f \int_\Omega \psi
= \lambda \int_\Omega |f|^{\frac{1}{{p}}-1}f \psi,
\]
by using the fact that $K f$ has zero average. Thus $Kf + c=\lambda |f|^{\frac{1}{{p}}-1}f $ for $c=\lambda |\Omega|^{-1} \int_\Omega |f|^{\frac{1}{{p}}-1}f$, and $|\lambda|^{{p}-1}\lambda f= |Kf+c|^{{p}-1}(K f+c)$. Since $\int_\Omega f=0$, also $\int_\Omega |Kf+c|^{{p}-1}(K f+c)=0$ and thus  $c=c_p(Kf)$, $K_p f=K f+c$.
In conclusion, $K_p f=\lambda |f|^{\frac{1}{{p}}-1}f$. Multiplying this equation by $f$ and integrating yields $\lambda=D_p$.
\end{proof}

\begin{lemma}\label{lemma:convergence_Deps} The map
\begin{align*}
(0,2^*-1]\to \R^+; \qquad p\mapsto D_{p}
\end{align*}
is continuous. Let $p_n,p\in (0,2^*-1]$ be such that $p_n\to p$ as $n\to\infty$. If $f_{p_n}\in X_{p_n}$ achieves $D_{p_n}$, then there exists $f_{p}\in X_{p}$ achieving $D_{p}$ such that, up to a subsequence,
\[
f_{p_n} \to f_{p} \qquad \text{strongly in } L^{\frac{p+1}{p}}(\Omega)\quad \text{ as }n\to\infty.
\]
\end{lemma}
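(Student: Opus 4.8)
The plan is to prove the continuity of $p\mapsto D_p$ and the compactness of extremizers simultaneously, via the standard ``every subsequence admits a convergent sub-subsequence'' scheme, distinguishing the subcritical target $p<2^*-1$ from the critical one $p=2^*-1$. I would first record a uniform two-sided bound on $D_{p_n}$: the bound $D_{p_n}\geq\delta>0$ for $p_n$ in a compact subinterval of $(0,2^*-1]$ is Lemma~\ref{lemma:bounds}, while, writing $v_n:=Kf_n$ for an extremizer $f_n$ of $D_{p_n}$, the identity $\int_\Omega f_nKf_n=\int_\Omega|\nabla v_n|^2$ combined with the Poincar\'e--Wirtinger and Sobolev inequalities (with constants uniform for $p_n$ near $p$) gives $D_{p_n}\leq C$. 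Hence, along a subsequence, $D_{p_n}\to D_*\in[\delta,C]$, and it remains to show $D_*=D_p$ and to produce a strongly convergent subsequence of $(f_{p_n})$. The lower bound $\liminf_n D_{p_n}\geq D_p$ is easy and direction-free: if $f_p$ achieves $D_p$, then by Lemma~\ref{lemma:D}(ii) the function $K_pf_p=D_p|f_p|^{1/p-1}f_p$ solves a Neumann Lane--Emden equation, hence by Proposition~\ref{prop:regularity} lies in $C^{2,\alpha}(\overline\Omega)$, so $f_p\in L^\infty(\Omega)$; dominated convergence then gives $\|f_p\|_{\frac{p_n+1}{p_n}}\to\|f_p\|_{\frac{p+1}{p}}=1$, whence $f_p/\|f_p\|_{\frac{p_n+1}{p_n}}\in X_{p_n}$ is admissible for $D_{p_n}$ and $D_{p_n}\geq\|f_p\|_{\frac{p_n+1}{p_n}}^{-2}D_p\to D_p$.

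For the critical target $p=2^*-1$ one has $p_n\leq 2^*-1$, hence $q_n:=\frac{p_n+1}{p_n}\geq q:=\frac{2N}{N+2}$, and I would use the nesting of $L^r(\Omega)$-spaces on the bounded set $\Omega$: for an extremizer $f_n$ of $D_{p_n}$, $\|f_n\|_q\leq|\Omega|^{1/q-1/q_n}\|f_n\|_{q_n}=|\Omega|^{1/q-1/q_n}$, and $h_n:=f_n/\|f_n\|_q\in X_{2^*-1}$ with $\|h_n\|_q=1$ satisfies $D_{p_n}=\|f_n\|_q^2\int_\Omega h_nKh_n\leq|\Omega|^{2(1/q-1/q_n)}D_{2^*-1}$; letting $n\to\infty$ gives $\limsup_n D_{p_n}\leq D_{2^*-1}$ and, with the lower bound, $D_{p_n}\to D_{2^*-1}$. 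Feeding this back, $\int_\Omega h_nKh_n=D_{p_n}/\|f_n\|_q^2\to D_{2^*-1}$, so $(h_n)$ is a maximizing sequence for $D_{2^*-1}$; Lemma~\ref{lemma:CK91} (which is precisely where $N\geq 4$ and Cherrier's inequality enter) then produces a subsequence with $h_n\to h$ strongly in $L^{2N/(N+2)}(\Omega)$, and one checks $\|h\|_{2N/(N+2)}=1$ and $\int_\Omega hKh=D_{2^*-1}$, so $h$ achieves $D_{2^*-1}$. Finally $\|f_n\|_q^2=D_{p_n}/\int_\Omega h_nKh_n\to 1$, so $f_{p_n}=\|f_n\|_q\,h_n\to h$ strongly in $L^{2N/(N+2)}=L^{(p+1)/p}$.

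For the subcritical target $p<2^*-1$ the constraint space $L^{\frac{p+1}{p}}(\Omega)$ changes with $p_n$, so instead of working in a fixed weak topology I would pass to pointwise limits through the Euler--Lagrange equation and regularity. Fix $q_0\in(1,\frac{p+1}{p})$ close enough to $\frac{p+1}{p}$ that $W^{2,q_0}(\Omega)$ embeds compactly into $L^{p+1+\varepsilon}(\Omega)$ for some $\varepsilon>0$ (possible precisely because $p<2^*-1$); for $n$ large $q_0<q_n$, so $(f_n)$ is bounded in $L^{q_0}(\Omega)$ and $(Kf_n)$ is bounded in $W^{2,q_0}(\Omega)$, hence precompact in $L^{p+1+\varepsilon}(\Omega)$. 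Writing $u_n:=K_{p_n}f_n=Kf_n+c_n$ one has $f_n=D_{p_n}^{-p_n}|u_n|^{p_n-1}u_n$, $-\Delta u_n=f_n$, $\int_\Omega|u_n|^{p_n-1}u_n=0$ and $\|u_n\|_{p_n+1}=D_{p_n}\leq C$; the last two facts, with the $W^{2,q_0}$-bound on $u_n-c_n=Kf_n$, force $(c_n)$ to be bounded, so along a subsequence $c_n\to c_*$, $u_n\to u$ in $L^{p+1+\varepsilon}(\Omega)$ and a.e., and $f_n\to f:=D_*^{-p}|u|^{p-1}u$ a.e. Passing to the limit in the identities above gives $u=K_pf$ and $K_pf=D_*|f|^{1/p-1}f$, and pairing with $f$ (equivalently, passing to the limit in $\int_\Omega|u_n|^{p_n+1}=D_{p_n}^{p_n+1}$) yields the crucial normalization $\|f\|_{\frac{p+1}{p}}=1$; moreover, since $|f_n|^{\frac{p+1}{p}}=D_{p_n}^{-p_n\frac{p+1}{p}}|u_n|^{p_n\frac{p+1}{p}}$ with exponent $p_n\frac{p+1}{p}\to p+1<p+1+\varepsilon$, the family $\{|f_n|^{\frac{p+1}{p}}\}$ is uniformly integrable and $\int_\Omega|f_n|^{\frac{p+1}{p}}\to\int_\Omega|f|^{\frac{p+1}{p}}$, so the Brezis--Lieb lemma upgrades the a.e. convergence to $f_n\to f$ strongly in $L^{\frac{p+1}{p}}(\Omega)$. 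Then $Kf_n\to Kf$ in $L^{p+1}(\Omega)$, hence $D_*=\lim_n\int_\Omega f_nKf_n=\int_\Omega fKf\leq D_p$ (as $f$ is admissible), and with the lower bound $D_*=D_p$ and $f$ achieves $D_p$.

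In either regime one closes by the subsequence principle: every subsequence of $(D_{p_n})$ has a further subsequence converging to $D_p$, so $D_{p_n}\to D_p$, and the extracted subsequence of $(f_{p_n})$ converges strongly in $L^{\frac{p+1}{p}}(\Omega)$ to an extremizer of $D_p$. I expect the main obstacle to be the critical case $p=2^*-1$, where weak limits can lose mass through concentration; the trick that circumvents it is the $L^r(\Omega)$-nesting estimate, which at once delivers the continuity $D_{p_n}\to D_{2^*-1}$ and presents the renormalized extremizers as a maximizing sequence for $D_{2^*-1}$, thereby reducing the compactness to the already-proven Lemma~\ref{lemma:CK91}. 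In the subcritical regime the only genuine care is the simultaneous control of the moving exponent $\frac{p_n+1}{p_n}$ and the moving constraint, which is why the argument goes through the Euler--Lagrange equation and uniform regularity rather than through weak compactness in a single space.
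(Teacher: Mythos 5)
Your proposal is correct, and for the critical target $p=2^*-1$ it is essentially identical to the paper's proof: the same lower bound via dominated convergence on a bounded extremizer $f_p$, the same $L^r$-nesting/H\"older estimate giving $\limsup_n D_{p_n}\leq D_{2^*-1}$, and the same reduction to Lemma~\ref{lemma:CK91} by presenting the renormalized extremizers as a maximizing sequence.

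For the subcritical target $p<2^*-1$ your argument shares the paper's key move---pass through the Euler--Lagrange relation $f_n=D_{p_n}^{-p_n}|u_n|^{p_n-1}u_n$ and regularity to gain uniform higher integrability of $u_n$, which is what lets one handle the moving exponent $\frac{p_n+1}{p_n}$---but the closing step is genuinely different. The paper shows $\|f_{p_n}\|_{(p+1)/p}\to 1$ by an explicit estimate (differentiating $t\mapsto\|u\|_t^t$ and bounding the logarithmic term), deduces $D_{p_n}\to D_p$, and then reinvokes Lemma~\ref{lemma:CK91} on the renormalized $f_{p_n}/\|f_{p_n}\|_{(p+1)/p}$ to extract a strong limit. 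You instead extract a.e.\ limits $u_n\to u$, $f_n\to f$ from the $W^{2,q_0}$-compactness, pass to the limit in $\int_\Omega|u_n|^{p_n+1}=D_{p_n}^{p_n+1}$ by Vitali to get $\|f\|_{(p+1)/p}=1$, and upgrade a.e.\ convergence to strong $L^{(p+1)/p}$-convergence via uniform integrability and Brezis--Lieb, only afterwards identifying the limit of $D_{p_n}$ with $D_p$. This buys you a slightly more self-contained subcritical case (no second appeal to Lemma~\ref{lemma:CK91}), at the cost of a few moving-exponent Vitali claims that must be verified (boundedness of $c_n$, weak $L^{q_0}$-convergence $f_n\rightharpoonup f$ so that $Kf_n\to Kf$, and the uniform integrability of $|u_n|^{p_n+1}$ and of $|f_n|^{(p+1)/p}$). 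All of those checks go through since $u_n$ is bounded in $L^{p+1+\varepsilon}$ with $p_n+1$, $p_n\frac{p+1}{p}$ eventually bounded away from $p+1+\varepsilon$ and $D_{p_n}\geq\delta>0$ by Lemma~\ref{lemma:bounds}, so the proof is sound.
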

\begin{proof}
Let $p_n\to p$ ($p_n,p\in (0,2^*-1]$) and let $f_p\in X_{p}$ with $\|f_p\|_{\frac{p+1}{p}}=1$ be such that $\int_\Omega f_p K f_p=D_{p}$. By Lemma~\ref{lemma:D}, $u_p:=|f_p|^{\frac{1}{p}-1}f_p$  solves $-\Delta u_p=D_p^{-1}|u_p|^{p-1}u_p$. In particular, by Proposition~\ref{prop:regularity} we have that $u_p\in L^\infty(\Omega)$, and also $f_p=|u_p|^{p-1}u_p\in L^\infty(\Omega)$. Therefore $f_p\in X_{p_n}$ and, by maximality,
\begin{align}\label{Dpb}
 \frac{D_{p}}{\|f_p\|_{\frac{{p_n}+1}{{p_n}}}^2} =\frac{1}{\|f_p\|_{\frac{{p_n}+1}{{p_n}}}^2} \int_\Omega f_p K f_p=\int_\Omega \frac{f_p}{\|f_p\|_{\frac{{p_n}+1}{{p_n}}}} K \left(\frac{f_p}{\|f_p\|_{\frac{{p_n}+1}{{p_n}}}}  \right) \leq D_{p_n}.
\end{align}
Since $\|f_p\|_{\frac{{p_n}+1}{{p_n}}}^2\to \|f_p\|_{\frac{p+1}{p}}^2=1$ (by Lebesgue's dominated convergence theorem), then $D_{p}\leq \liminf\limits_{n\to\infty} D_{p_n}$.

\medskip

Let $f_{p_n}$ achieve $D_{p_n}$, that is: $f_{p_n}\in X_{p_n}$ with $\|f_{p_n}\|_{\frac{{p_n}+1}{{p_n}}}=1$ and $\int_\Omega f_{p_n} K f_{p_n} = D_{p_n}$.  Passing to a subsequence, we consider two cases: 

\medskip

\textbf{Critical case $p=2^*-1$:} In this case, $\frac{p+1}{p}<\frac{{p_n}+1}{{p_n}}$ and, since $\Omega$ is bounded, $L^{\frac{{p_n}+1}{{p_n}}}(\Omega) \subseteq L^\frac{p+1}{p}(\Omega)$. Then $\frac{f_{p_n}}{\|f_{p_n}\|_{\frac{p+1}{p}}}\in X_{p}$ and 
\begin{equation}\label{eq:auxiliary_convergence}
\frac{D_{p_n}}{\|f_{p_n}\|_{\frac{p+1}{p}}^2} =\frac{1}{\|f_{p_n}\|_{\frac{p+1}{p}}^2} \int_\Omega f_{p_n} K f_{p_n}=\int_\Omega \frac{f_{p_n}}{\|f_{p_n}\|_{\frac{p+1}{p}}} K \left(\frac{f_{p_n}}{\|f_{p_n}\|_{\frac{p+1}{p}}}  \right) \leq D_{p}.
\end{equation}
From H\"older's inequality we have
\begin{equation}\label{eq:Holder}
\|f_{p_n}\|_{\frac{p+1}{p}}^2\leq \|f_{p_n}\|_{\frac{{p_n}+1}{{p_n}}}^2|\Omega|^\frac{2(p-{p_n})}{(p+1)({p_n}+1)} =|\Omega|^\frac{2(p-{p_n})}{(p+1)({p_n}+1)}=1+o(1)\quad \text{ as }n\to\infty,
\end{equation}
thus $\limsup\limits_{n\to\infty} D_{p_n}\leq D_{p}
\limsup\limits_{n\to\infty}\|f_{p_n}\|_{\frac{p+1}{p}}^2
\leq D_{p}$  and so $D_{p_n} \to D_{p}$. Going back to~\eqref{eq:auxiliary_convergence}, we see that actually $\|f_{p_n}\|_{\frac{p+1}{p}}^2\to 1$, and $(\frac{f_{p_n}}{\|f_{p_n}\|_{\frac{p+1}{p}}})$ is a maximizing sequence for $D_{p}$. Therefore, by Lemma~\ref{lemma:CK91}, there exists $f_{p}\in L^\frac{p+1}{p}(\Omega)$ such that
\[
\frac{f_{p_n}}{\|f_{p_n}\|_{\frac{p+1}{p}}} \to f_{p} \qquad \text{ strongly  in } L^\frac{p+1}{p}(\Omega)\quad \text{ as }n\to\infty.
\]
Using again that $\|f_{p_n}\|_{\frac{p+1}{p}} \to 1$, the conclusion follows.

\medskip

\textbf{Subcritical case $p<2^*-1$:}  To simplify notation, we use $c>1$ to denote possibly different constants independent of $n$ and we often pass to subsequences without relabeling. Since $f_{p_n}$ is a maximizer for $D_{p_n}$, we have by Lemma \ref{lemma:D} that
\begin{align}\label{undef}
 u_{p_n}:= |f_{p_n}|^{\frac{1}{p_n}-1}f_{p_n}=(D_{p_n})^{-1}K_{p_n} f_{p_n}
\end{align}
From this, we have 
\[
\|\nabla u_{p_n}\|_2=D_{p_n}^{-1} \left(\int_\Omega |\nabla K_{p_n} f_{p_n}|^2 \right)^{1/2}=D_{p_n}^{-1} \left(\int_\Omega |f_{p_n}|^\frac{p_n+1}{p_n} \right)^{1/2}=D_{p_n}^{-1}  <1/\delta.
\]
 where we used Lemma \ref{lemma:bounds} for the bound. Moreover, by \eqref{undef},
\[
\left|\int_\Omega u_{p_n}\right|\leq \int_\Omega |f_{p_n}|^\frac{1}{p_n}\leq |\Omega|^\frac{p_n}{p_n+1}\leq c.
\]
Therefore, by Poincar\'e-Wirtinger's  inequality,
\[
\|u_{p_n}\|_2 \leq \left\| u_{p_n}-\frac{1}{|\Omega|}\int_\Omega u_{p_n}  \right\|_2 + \left\| \frac{1}{|\Omega|}\int_\Omega u_{p_n} \right\|_2\leq c\|\nabla u_{p_n}\|_2+c\leq c.
\]
As a consequence, by Sobolev embeddings,
\begin{align}\label{embed}
(u_{p_n}) \text{ is bounded in }L^{q}(\Omega)\quad \text{ for any }q\in(1,2^*). 
\end{align}
But then, again by~\eqref{undef}, we have that $|f_{p_n}|= |u_n|^{p_n}$ and, since $p_n\to p\in(0,2^*-1)$,
 \begin{align*}
|f_{p_n}|^{\frac{p_n+1}{p_n}}= |u_{p_n}|^{p_n+1}\in L^1(\Omega)
 \end{align*}
and therefore, by~\eqref{embed} and Lemma~\ref{lemma:D}, 
\begin{align}\label{fn1}
 \lim_{n\to\infty}\|f_{p_n}\|^{\frac{p+1}{p}}_{\frac{p+1}{p}}
 =\lim_{n\to\infty}\|u_{p_n}\|_{p+1}^{p+1}=\lim_{n\to\infty}\|u_{p_n}\|_{p_n+1}^{p_n+1}
 =\lim_{n\to\infty}\|f_{p_n}\|_{\frac{p_n+1}{p_n}}=1,
\end{align}
where we used the fact that there is $\delta>0$ small such that
\begin{align*}
s(p+1)+(1-s)(p_n+1)\in(p+1-\delta,p+1+\delta),\quad \text{ for } n\text{ large, }s\in(0,1),
\end{align*}
 and
\begin{align*}
 \left|
 \|u_{p_n}\|_{p+1}^{p+1}-\|u_{p_n}\|_{p_n+1}^{p_n+1}
 \right|
&\leq |p_n-p|\int_\Omega\int_0^1 |u_{p_n}|^{s(p+1)+(1-s)(p_n+1)}|\ln(|u_{p_n}|)|\ ds\, dx\\
&\leq c |p_n-p|\int_\Omega \left(|u_{p_n}|^{p+1-\delta}+|u_{p_n}|^{p+1+\delta}\right)\, dx=o(1)
\end{align*}
as $n\to\infty$.   Then
\begin{align*}
  \lim_{n\to \infty}D_{p_n}=\lim_{n\to\infty}\int_\Omega f_{p_n} K f_{p_n}=\lim_{n\to\infty}\frac{\int_\Omega f_{p_n} K f_{p_n}}{\|f_{p_n}\|^2_{\frac{p+1}{p}}}\leq D_{p}.
\end{align*}
Using~\eqref{Dpb}, it follows that $\lim_{n\to \infty}D_{p_n}=D_p$. Since 
$f_{p_n}/\|f_{p_n}\|_{\frac{p+1}{p}}$ is a maximizing sequence for $D_p,$ there is, by Lemma~\ref{lemma:CK91}, $\widehat f\in L^{\frac{p+1}{p}}(\Omega)$ such that $f_{p_n}\to \widehat f$ in $L^{\frac{p+1}{p}}(\Omega)$ and (by the continuity of $f\mapsto fKf$, see \eqref{eq:wc}) $\widehat f$ is a maximizer for $D_p$.
\end{proof}

\subsection{Results regarding $\Lambda_p$ ($p>0$)}  \label{sec:Lambda_p}
 
 In this subsection we translate the results obtained in the dual framework in terms of the direct functional $I_p$, proving Theorem \ref{th:NeumannLENS_0}. The main step is to prove that $\Lambda_p=D_p^{-1}$. The case $p=2^*-1$ is not straightforward, and we first have to deal with the subcritical case $p\in (0,2^*-1)$. 
 
 Recall the map $c_p: L^{p+1}(\Omega)\to \R$ (defined in \eqref{Ks:def}), where $c_p(w)$ is the unique real number such that $\int_\Omega |w+c_p(w)|^{p-1}(w+c_p(w))=0$ for $w\in L^{p+1}(\Omega)$. 
 
The next Lemma is a direct consequence of \cite[Proof of Lemma 2.1]{PW15}.
\begin{lemma}\label{eq:auxlemmaPW}
For $p\geq0$, the map $c_p: L^{p+1}(\Omega)\to \R;$ $u\mapsto c_p(u)$ is continuous and, for every $w\in L^{p+1}(\Omega)$,
 \[
 \|w+c_p(w)\|_{p+1}=\min_{c\in \R} \|w+c\|_{p+1}.
\]
\end{lemma}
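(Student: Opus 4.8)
\textbf{Proof plan for Lemma~\ref{eq:auxlemmaPW}.}
The plan is to recover both statements from the convexity and coercivity of the one-variable function
\[
\Phi_w(c):=\|w+c\|_{p+1}^{p+1}=\int_\Omega |w+c|^{p+1}\,dx,\qquad c\in\R,
\]
together with the fact that, for $p>0$, its derivative reproduces the ``nonlinear average'' constraint. First I would record that for $p>0$ the map $t\mapsto|t|^{p+1}$ is $C^1$ on $\R$ with derivative $(p+1)|t|^{p-1}t$, which is continuous and strictly increasing; hence $\Phi_w$ is $C^1$ with
\[
\Phi_w'(c)=(p+1)\int_\Omega |w+c|^{p-1}(w+c)\,dx,
\]
and this expression is strictly increasing in $c$ (strict monotonicity of $t\mapsto|t|^{p-1}t$), so $\Phi_w$ is strictly convex. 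Coercivity, $\Phi_w(c)\to+\infty$ as $|c|\to\infty$, is immediate since $\|w+c\|_{p+1}\geq |c|\,|\Omega|^{1/(p+1)}-\|w\|_{p+1}$. Therefore $\Phi_w$ has a unique global minimizer, which is exactly the unique zero of $\Phi_w'$, i.e. the unique $c$ with $\int_\Omega|w+c|^{p-1}(w+c)=0$; by definition this is $c_p(w)$, and since $x\mapsto x^{1/(p+1)}$ is increasing on $[0,\infty)$ this gives $\|w+c_p(w)\|_{p+1}=\min_{c\in\R}\|w+c\|_{p+1}$. The borderline case $p=0$ has to be treated separately: $t\mapsto|t|$ is not differentiable at $0$, so instead I would argue directly that $c\mapsto\|w+c\|_1$ is convex and coercive, that the constraint $\int_\Omega\operatorname{sgn}(w+c)=0$ characterizes its minimizers, and invoke the cited argument from \cite[Proof of Lemma 2.1]{PW15} for the uniqueness of $c_0(w)$ (which holds under the standing smoothness assumptions on $\Omega$, ensuring $|\{w+c=0\}|$ behaves suitably).

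For continuity of $w\mapsto c_p(w)$, I would use a standard argument by contradiction combined with the minimization characterization just obtained. Suppose $w_n\to w$ in $L^{p+1}(\Omega)$ but $c_p(w_n)\not\to c_p(w)$; passing to a subsequence, either $c_p(w_n)\to c_\infty\in\R$ with $c_\infty\neq c_p(w)$, or $|c_p(w_n)|\to\infty$. The latter is impossible because the minimality property forces $\|w_n+c_p(w_n)\|_{p+1}\leq\|w_n\|_{p+1}$, which together with the reverse triangle inequality bounds $|c_p(w_n)|$ uniformly. In the former case, pass to the limit in $\int_\Omega|w_n+c_p(w_n)|^{p-1}(w_n+c_p(w_n))=0$ using $L^{p+1}$-convergence of $w_n+c_p(w_n)\to w+c_\infty$ and continuity/growth of $t\mapsto|t|^{p-1}t$ (for $p\geq 1$ one controls the integrand by $|w_n+c_p(w_n)|^p$ and uses uniform integrability; for $0<p<1$ the map is bounded near $0$ and sublinear, so Vitali/dominated convergence applies after extracting an a.e.-convergent subsequence). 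This yields $\int_\Omega|w+c_\infty|^{p-1}(w+c_\infty)=0$, so by uniqueness $c_\infty=c_p(w)$, a contradiction. For $p=0$ the same scheme works but the passage to the limit in the sign constraint is the delicate point and is precisely what is handled in \cite[Proof of Lemma 2.1]{PW15}, so I would simply cite it there.

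The main obstacle is the case $p=0$: the functional $c\mapsto\|w+c\|_1$ is only piecewise linear, its minimizer need not be characterized by a pointwise Euler equation in the naive sense, and uniqueness of $c_0(w)$ can fail if the level set $\{w+c=\text{const}\}$ has positive measure on an interval of $c$'s — this is exactly why the statement is attributed to \cite{PW15} and why the precise formulation of the sign constraint (with the $|\{u=0\}|$ slack, cf. the set $\cM_0$ in Proposition~\ref{thm:varcha}) is needed. For $p>0$ everything is soft and follows from strict convexity and coercivity as above; I would present the $p>0$ argument in full and defer the $p=0$ subtleties to the reference.
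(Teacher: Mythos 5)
The paper does not actually prove this lemma: it states that it is ``a direct consequence of \cite[Proof of Lemma 2.1]{PW15}'' and moves on. Your proposal supplies a self-contained proof, and it is correct for $p>0$: strict monotonicity of $t\mapsto|t|^{p-1}t$ makes $\Phi_w(c)=\|w+c\|_{p+1}^{p+1}$ strictly convex with strictly increasing derivative, coercivity gives a unique minimizer, the vanishing of $\Phi_w'$ at the minimizer recovers the defining constraint of $c_p(w)$, and the contradiction argument gives continuity. Your identification of the delicate point as $p=0$ (loss of differentiability, possible nonuniqueness of the median) is accurate, and deferring that case to \cite{PW15} is exactly what the paper itself does, so there is no gap.

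One remark that would streamline your continuity step and avoid the growth/uniform-integrability case distinction for $|w_n+c_p(w_n)|^{p-1}(w_n+c_p(w_n))$: once you know that $c_p(w_n)$ is the unique minimizer of $c\mapsto\|w_n+c\|_{p+1}$ and that the sequence $c_p(w_n)$ is bounded (your reverse-triangle argument), extract $c_p(w_n)\to c_\infty$ and pass to the limit directly in the inequality
\[
\|w_n+c_p(w_n)\|_{p+1}\leq \|w_n+c\|_{p+1}\qquad\text{for all }c\in\R.
\]
Since $w_n\to w$ in $L^{p+1}$ and $c_p(w_n)\to c_\infty$, both sides converge, giving $\|w+c_\infty\|_{p+1}\leq\|w+c\|_{p+1}$ for every $c$. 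By uniqueness of the minimizer, $c_\infty=c_p(w)$. This uses only the $L^{p+1}$-continuity of the norm and the uniqueness already established, and sidesteps passing to the limit in the nonlinear Euler--Lagrange constraint; it also parallels the structure of the estimate the paper itself uses around equation \eqref{l02} in the proof of Theorem~\ref{thm:p0}.
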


\begin{lemma}\label{lemma:Lambdap_achieved}
Let $p\in (0,2^*-1)$. Then $\Lambda_p$ is achieved. Moreover, every minimizer $v_p$ satisfies
\begin{equation}\label{eq:v_p}
-\Delta v_p=\Lambda_p |v_p|^{p-1}v_p \text{ in } \Omega,\quad \partial_\nu v_p=0 \text{ on } \partial \Omega.
\end{equation}
\begin{proof}
Since the embedding $H^1(\Omega)\hookrightarrow L^p(\Omega)$ is compact, using the direct method of Calculus of Variations it is straightforward to check that $\Lambda_p$ is achieved. Let $v$ be a minimizer. If $p\geq 1$, then by the theory of Lagrange multipliers we conclude that $v$ solves \eqref{eq:v_p}. We now focus on the case $p\in (0,1)$. Inspired by the proof of \cite[Lemma 2.2]{PW15}, assume in view of a contradiction the existence of $\varphi\in H^1(\Omega)$ such that
\begin{equation}\label{eq:contra_aux}
\int_\Omega \nabla v \cdot \nabla \varphi - \Lambda_p \int_\Omega |v|^{p-1}v \varphi<0.
\end{equation}
Consider the test function
\[
w_t:= \frac{v+t\varphi +c_p(v+t\varphi)}{\|v+t\varphi +c_p(v+t\varphi)\|_{p+1}}.
\]
By Lemma \ref{eq:auxlemmaPW} and since $c_p(v)=0$, we have $c_p(v+t\varphi)\to 0$ as $t\to 0$ and $1=\|v\|_{p+1}^2\leq \|v+c_p(v+t\varphi)\|_{p+1}$.  For $t>0$ sufficiently small, we apply the intermediate value theorem to the map $h(s):=\|v+s\varphi +c_p(v+t\varphi)\|^{-2}_{p+1}$ between $s=0$ and $s=t$, obtaining
\begin{align*}
\frac{1}{\|v+t\varphi +c_p(v+t\varphi)\|^2_{p+1}}-\frac{1}{\|v +c_p(v+t\varphi)\|^2_{p+1}}=h(t)-h(0)=h'(s_t)t
\end{align*}
for some $s_t\in (0,t)$. Therefore,
\begin{align*}
\frac{1}{\|v+t\varphi +c_p(v+t\varphi)\|^2_{p+1}}\leq 1-2t\frac{ \int_\Omega |v+s_t \varphi+ c_p(v+t\varphi)|^{p-1}(v+c_p(v+t\varphi)+s_t\varphi) \varphi}{\|v+c_p(v+t\varphi )+ s_t \varphi\|_{p+1}^{p+3}}.
\end{align*}
From $\|v\|_{p+1}=1$ and  the fact that $c_p(v+t\varphi)\to 0$ as $t\to 0$, then
\begin{equation*}
\frac{ \int_\Omega |v+s_t \varphi+ c_p(v+t\varphi)|^{p-1}(v+c_p(v+t\varphi)+s_t\varphi) \varphi}{\|v+c_p(v+t\varphi )+ s_t \varphi\|_{p+1}^{p+3}}\to \frac{\int_\Omega |v|^{p-1}v\varphi}{\|v\|_{p+1}^{p+3}}=\int_\Omega |v|^{p-1}v \varphi
\end{equation*}
which implies that the first quantity is bounded, and
\begin{align*}
\int_\Omega |\nabla v|^2&\leq \int_\Omega \left|\nabla w_t\right|^2=\frac{1}{\|v+t\varphi +c_p(v+t\varphi)\|_{p+1}^2}\left(\Lambda_p + 2t \int_\Omega \nabla v\cdot \nabla \varphi +o(t) \right)\\
				      &\leq \Lambda_p + 2t \left(\int_\Omega \nabla v\cdot \nabla \varphi  - \Lambda_p \frac{ \int_\Omega |v+c_p(v+t\varphi)|^{p-1}(v+c_p(v+t\varphi)) \varphi}{\|v+c_p(v+t\varphi)\|_{p+1}^{p+3}} \right) + o(t)<\Lambda_p
\end{align*}
for sufficiently small $t>0$, which is a contradiction. Observe that the last inequality is a consequence of \eqref{eq:contra_aux}, which yield
\[
\int_\Omega \nabla v\cdot \nabla \varphi  - \Lambda_p \frac{ \int_\Omega |v+c_p(v+t\varphi)|^{p-1}(v+c_p(v+t\varphi)) \varphi}{\|v+c_p(v+t\varphi)\|_{p+1}^{p+3}} \to \int_\Omega \nabla v \cdot \nabla \varphi - \Lambda_p \int_\Omega |v|^{p-1}v \varphi<0
\]
as $t\to 0$. In conclusion, $v$ satisfies \eqref{eq:v_p}.

\end{proof}
\end{lemma}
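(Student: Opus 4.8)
The plan is to treat separately the existence of a minimizer, via the direct method, and the derivation of the Euler--Lagrange equation~\eqref{eq:v_p}, which requires genuinely different arguments according to whether $p\geq 1$ or $0<p<1$.

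For existence, I would take a minimizing sequence $(u_n)\subset H^1(\Omega)$ for $\Lambda_p$ in~\eqref{Lambdap:def}, so $\|u_n\|_{p+1}=1$, $\int_\Omega|u_n|^{p-1}u_n=0$ and $\int_\Omega|\nabla u_n|^2\to\Lambda_p$, and first show that it is bounded in $H^1(\Omega)$: decomposing $u_n=a_n+w_n$ into its average $a_n\in\R$ and a zero-average remainder, Poincar\'e--Wirtinger bounds $\|w_n\|_{H^1}$ in terms of $\|\nabla u_n\|_2$, hence $\|w_n\|_{p+1}$ is bounded (Sobolev, since $p+1<2^*$), and then $1=\|u_n\|_{p+1}\geq|a_n|\,|\Omega|^{1/(p+1)}-\|w_n\|_{p+1}$ controls $|a_n|$. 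Up to a subsequence $u_n\rightharpoonup v$ in $H^1(\Omega)$; since $p+1<2^*$ the embedding $H^1(\Omega)\hookrightarrow L^{p+1}(\Omega)$ is compact, so $u_n\to v$ in $L^{p+1}(\Omega)$ and a.e., whence $\|v\|_{p+1}=1$, and continuity of the Nemytskii map $u\mapsto|u|^{p-1}u$ from $L^{p+1}(\Omega)$ to $L^{(p+1)/p}(\Omega)$ gives $\int_\Omega|u_n|^{p-1}u_n\to\int_\Omega|v|^{p-1}v=0$. Weak lower semicontinuity of the Dirichlet integral then yields $\int_\Omega|\nabla v|^2\leq\Lambda_p$, so $v$ is admissible and a minimizer; note also $\Lambda_p>0$, since no nonzero constant can satisfy the constraint.

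For the equation when $p\geq 1$, both $u\mapsto\|u\|_{p+1}^{p+1}$ and $u\mapsto\int_\Omega|u|^{p-1}u$ are $C^1$ on $H^1(\Omega)$, and the differential of the second at $v$ is nonzero (evaluate on the constant $1$, using $\int_\Omega|v|^{p-1}>0$), so a standard Lagrange-multiplier computation applies: minimizing the scale-invariant quotient $\int_\Omega|\nabla u|^2/\|u\|_{p+1}^2$ on the $C^1$-manifold $\{\,\int_\Omega|u|^{p-1}u=0\,\}\setminus\{0\}$ produces $\mu\in\R$ with $2\int_\Omega\nabla v\cdot\nabla\varphi-2\Lambda_p\int_\Omega|v|^{p-1}v\varphi=\mu p\int_\Omega|v|^{p-1}\varphi$ for all $\varphi\in H^1(\Omega)$; testing with $\varphi\equiv1$ and using the constraints forces $\mu=0$, which is exactly the weak form of~\eqref{eq:v_p}.

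The case $0<p<1$ is the real obstacle, because $t\mapsto|t|^{p-1}t$ is only H\"older continuous, so the constraint functional is not $C^1$ and the Lagrange rule is unavailable. Here I would argue by contradiction in the spirit of \cite[Lemma~2.2]{PW15}: assuming there is $\varphi\in H^1(\Omega)$ with $\int_\Omega\nabla v\cdot\nabla\varphi-\Lambda_p\int_\Omega|v|^{p-1}v\varphi<0$, test $\Lambda_p$ against the admissible competitor $w_t:=(v+t\varphi+c_p(v+t\varphi))/\|v+t\varphi+c_p(v+t\varphi)\|_{p+1}$, whose ``nonlinear average'' vanishes by the very definition of $c_p$. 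By Lemma~\ref{eq:auxlemmaPW} one has $c_p(v+t\varphi)\to c_p(v)=0$ and $\|v+c_p(v+t\varphi)\|_{p+1}\geq\|v\|_{p+1}=1$; combining this with the intermediate value theorem applied to $s\mapsto\|v+s\varphi+c_p(v+t\varphi)\|_{p+1}^{-2}$ on $[0,t]$ (the $L^{p+1}$-norm staying bounded away from $0$ there, hence this map being differentiable) one obtains the expansion
\[
\int_\Omega|\nabla w_t|^2\leq\Lambda_p+2t\Big(\int_\Omega\nabla v\cdot\nabla\varphi-\Lambda_p\int_\Omega|v|^{p-1}v\varphi\Big)+o(t)\qquad\text{as }t\to0^+.
\]
The bracket being negative gives $\int_\Omega|\nabla w_t|^2<\Lambda_p$ for small $t>0$, contradicting minimality; replacing $\varphi$ by $-\varphi$ then upgrades the resulting one-sided inequality to the identity $\int_\Omega\nabla v\cdot\nabla\varphi=\Lambda_p\int_\Omega|v|^{p-1}v\varphi$ for every $\varphi\in H^1(\Omega)$, i.e.\ to~\eqref{eq:v_p}. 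The only genuinely technical point is the bookkeeping in that last expansion, since the competitor depends on $t$ simultaneously through $v+t\varphi$, through the correction $c_p(v+t\varphi)$, and through the intermediate point produced by the mean value theorem.
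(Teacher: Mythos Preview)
Your proposal is correct and follows essentially the same route as the paper: direct method for existence, Lagrange multipliers for $p\geq 1$, and for $0<p<1$ the very same contradiction argument with the competitor $w_t=(v+t\varphi+c_p(v+t\varphi))/\|v+t\varphi+c_p(v+t\varphi)\|_{p+1}$, Lemma~\ref{eq:auxlemmaPW}, and the mean value theorem applied to $s\mapsto\|v+s\varphi+c_p(v+t\varphi)\|_{p+1}^{-2}$. You supply more detail than the paper on the boundedness of the minimizing sequence and on why the multiplier $\mu$ vanishes (testing with $\varphi\equiv 1$), both of which the paper leaves implicit.
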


\begin{lemma}\label{lemma:DpLambdap}
We have
\begin{align}\label{r}
\Lambda_p= \frac{1}{D_p} \qquad \text{ for every } p\in (0,2^*-1),
\end{align}
and $f$ is a maximizer for $D_p$ if and only if $v=|f|^{\frac{1}{p}-1}f$ is a minimizer for $\Lambda_p$. Moreover,
\[
\Lambda_{2^*-1}\leq \frac{1}{D_{2^*-1}}.
\]
\end{lemma}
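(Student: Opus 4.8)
The plan is to establish the identity $\Lambda_p = 1/D_p$ for subcritical $p$ by proving both inequalities, using the correspondence $v = |f|^{1/p - 1}f$ (equivalently $f = |v|^{p-1}v$) between admissible functions, and then to obtain the one-sided bound at $p = 2^*-1$ by a limiting argument along a subcritical sequence $p_n \nearrow 2^*-1$.

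\medskip

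\textbf{Step 1: The change of variables.} First I would set up the bijection between the constraint sets. If $v \in H^1(\Omega)$ with $\|v\|_{p+1} = 1$ and $\int_\Omega |v|^{p-1}v = 0$, put $f := |v|^{p-1}v$. Then $f \in L^{\frac{p+1}{p}}(\Omega)$ with $\int_\Omega f = 0$ (so $f \in X_p$) and $\|f\|_{\frac{p+1}{p}}^{\frac{p+1}{p}} = \int_\Omega |v|^{p+1} = 1$, so $f$ is admissible for $D_p$. Conversely, if $f \in X_p$ with $\|f\|_{\frac{p+1}{p}} = 1$, set $v := |f|^{\frac{1}{p}-1}f$; then $v \in L^{p+1}(\Omega)$ with $\|v\|_{p+1} = 1$ and $\int_\Omega |v|^{p-1}v = \int_\Omega f = 0$. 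The key relation is that $v = Kf$ up to an additive constant, i.e. $v = K_p f$ (using that $\int_\Omega |K_p f|^{p-1}K_p f = 0$ pins down the constant), so $v \in H^1(\Omega)$ by Lemma~\ref{l:reg} and $\int_\Omega |\nabla v|^2 = \int_\Omega |\nabla K_p f|^2 = \int_\Omega (K f)(-\Delta K f) = \int_\Omega f\, Kf$. Thus under this correspondence $\int_\Omega |\nabla v|^2 = \int_\Omega f Kf$ whenever $f$ is a critical point / the Euler--Lagrange relation holds; in general one gets $\int_\Omega |\nabla v|^2 \geq \int_\Omega f Kf$ by Cauchy--Schwarz, which is what is needed for the inequalities.

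\medskip

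\textbf{Step 2: The two inequalities for $p < 2^*-1$.} For $\Lambda_p \leq 1/D_p$: take $f$ a maximizer for $D_p$ (exists by Lemma~\ref{lemma:D}), set $v = |f|^{1/p-1}f$; by Lemma~\ref{lemma:D}(ii), $K_p f = D_p |f|^{1/p-1}f = D_p v$, so $-\Delta(D_p v) = f = |v|^{p-1}v$, giving $-\Delta v = D_p^{-1}|v|^{p-1}v$, hence $\int_\Omega |\nabla v|^2 = D_p^{-1}\int_\Omega |v|^{p+1} = D_p^{-1}$; since $v$ is admissible for $\Lambda_p$, $\Lambda_p \leq D_p^{-1}$. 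For $\Lambda_p \geq 1/D_p$: take $v$ a minimizer for $\Lambda_p$ (exists by Lemma~\ref{lemma:Lambdap_achieved}), which by \eqref{eq:v_p} solves $-\Delta v = \Lambda_p |v|^{p-1}v$. Set $f := \Lambda_p |v|^{p-1}v$, so $-\Delta v = f$, $\int_\Omega f = 0$, and $Kf = v - |\Omega|^{-1}\int_\Omega v$. Normalizing $\tilde f := f/\|f\|_{\frac{p+1}{p}}$ with $\|f\|_{\frac{p+1}{p}} = \Lambda_p \||v|^{p-1}v\|_{\frac{p+1}{p}} = \Lambda_p \|v\|_{p+1}^p = \Lambda_p$, one computes $\int_\Omega \tilde f K \tilde f = \Lambda_p^{-2}\int_\Omega f Kf = \Lambda_p^{-2}\int_\Omega f v = \Lambda_p^{-2}\int_\Omega |\nabla v|^2 = \Lambda_p^{-2}\Lambda_p = \Lambda_p^{-1}$, hence $D_p \geq \Lambda_p^{-1}$. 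Combining gives \eqref{r}, and the maximizer/minimizer correspondence is read off from the equality cases above. I would also verify that $v$ being a minimizer of $\Lambda_p$ forces equality $\int_\Omega |\nabla v|^2 = \int_\Omega f Kf$ (no loss in Cauchy--Schwarz because $\nabla v = \nabla Kf$ exactly), closing the equivalence.

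\medskip

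\textbf{Step 3: The bound at the critical exponent and the main obstacle.} For $\Lambda_{2^*-1} \leq 1/D_{2^*-1}$, I would take a maximizer $f = f_{2^*-1}$ for $D_{2^*-1}$ (Lemma~\ref{lemma:D}(i)) and $v = |f|^{\frac{1}{p}-1}f$ with $K_p f = D_{2^*-1} v$ (Lemma~\ref{lemma:D}(ii)); by Proposition~\ref{prop:regularity}, $v \in C^{2,\alpha}(\overline\Omega)$, it solves $-\Delta v = D_{2^*-1}^{-1}|v|^{2^*-2}v$ with $\partial_\nu v = 0$ and $\int_\Omega |v|^{2^*-2}v = 0$, and $\|v\|_{2^*}^{2^*} = \|f\|_{\frac{2N}{N+2}}^{\frac{2N}{N+2}} = 1$; then $\int_\Omega |\nabla v|^2 = D_{2^*-1}^{-1}\int_\Omega |v|^{2^*} = D_{2^*-1}^{-1}$, and since $v$ is admissible in the infimum defining $\Lambda_{2^*-1}$ we get $\Lambda_{2^*-1} \leq D_{2^*-1}^{-1}$. (The reverse inequality, and hence equality, is presumably postponed to a later lemma because it requires knowing $\Lambda_{2^*-1}$ is achieved, which is the delicate point flagged after Theorem~\ref{th:NeumannLENS_0}.) The main obstacle is not Step 3 itself but making sure all the bookkeeping with the additive constants $c_p(\cdot)$ and the normalizations is consistent — in particular that $v = |f|^{1/p-1}f$ produced from the dual maximizer genuinely lies in $H^1$ with gradient $\nabla Kf$, which relies on the regularity results of Section~\ref{sec:reg} applied to $f \in L^\infty$, and that the Euler--Lagrange identity $K_p f = D_p |f|^{1/p-1}f$ is available (Lemma~\ref{lemma:D}(ii)) precisely so that the Cauchy--Schwarz step is an equality rather than merely an inequality.
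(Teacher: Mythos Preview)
Your Steps 2 and 3 are correct and follow essentially the same approach as the paper: for $\Lambda_p \leq 1/D_p$ you use the dual maximizer $f$ and the Euler--Lagrange relation $K_p f = D_p|f|^{1/p-1}f$ from Lemma~\ref{lemma:D}(ii); for the reverse inequality in the subcritical range you use a minimizer $v$ of $\Lambda_p$ (Lemma~\ref{lemma:Lambdap_achieved}) and its equation \eqref{eq:v_p}. The paper does exactly this, and also proves $\Lambda_{2^*-1}\leq 1/D_{2^*-1}$ directly from the critical maximizer (as in your Step~3), not via the limiting argument you mention in your opening sentence --- that limit is used only in the \emph{next} lemma to upgrade the inequality to equality.

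One caution about Step~1: the sentence ``the key relation is that $v = Kf$ up to an additive constant, i.e.\ $v = K_p f$ \ldots\ so $v\in H^1(\Omega)$'' is not true for a general admissible $f$; the identity $|f|^{1/p-1}f = D_p^{-1}K_p f$ holds only at critical points, and for arbitrary $f\in X_p$ the function $|f|^{1/p-1}f$ need not lie in $H^1$. Likewise the stated ``Cauchy--Schwarz'' inequality $\int_\Omega |\nabla v|^2 \geq \int_\Omega f Kf$ is not the right relation (the correct duality estimate, if you wanted a direct argument bypassing extremizers, is $1 = \int_\Omega f v = \int_\Omega \nabla(Kf)\cdot\nabla v \leq \|\nabla Kf\|_2\|\nabla v\|_2 = (\int_\Omega fKf)^{1/2}\|\nabla v\|_2$, giving $\|\nabla v\|_2^2 \geq 1/D_p$). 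Since your Steps~2--3 work only with extremizers and never invoke this general claim, the proof stands; but Step~1 as written should be trimmed or corrected.
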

\begin{proof}
Let $p\in (0,2^*-1]$. By Lemma \ref{lemma:D}, there is
$f\in X_p$ such that
\[
\|f\|_\frac{p+1}{p}=1,\qquad \int_\Omega fKf=D_p.
\]
Let $u:=K_p f=D_p |f|^{\frac{1}{p}-1}f$ and $v:=\frac{u}{\|u\|_{p+1}}\in H^1(\Omega)$, which satisfies $\|v\|_{p+1}=1$ and $\int_\Omega |v|^{p-1}v=0$. Then
\begin{align*}
\Lambda_p\leq \int_\Omega |\nabla v|^2=\frac{\| \nabla u\|_2^2}{\|u\|^2_{p+1}}= \frac{\int_\Omega |\nabla K_pf|^2}{\left(\int_\Omega |K_pf|^{p+1}\right)^\frac{2}{p+1}}=\frac{\int_\Omega f K_p f}{D_p^2}=\frac{1}{D_p}.
\end{align*}

Assume moreover that $p<2^*-1$. Let $u$ be an extremizer for  $\Lambda_p$, which solves
\[
-\Delta u=\Lambda_p |u|^{p-1}u \text{ in } \Omega, \qquad \partial_\nu u=0 \text{ in } \partial \Omega,\qquad \|u\|_{p+1}=1.
\]
(recall Lemma \ref{lemma:Lambdap_achieved}). Let $f=|u|^{p-1}u$, which satisfies $\int_\Omega |f|^\frac{p+1}{p}=\int_\Omega |u|^{p+1}=1$. Then
\begin{align*}
D_p \geq \int_\Omega fKf =\int_\Omega |u|^{p-1} u K(|u|^{p-1}u)=\frac{1}{\Lambda_p}\int_\Omega |u|^{p+1}=\frac{1}{\Lambda_p},
\end{align*}
which ends the proof.
\end{proof}

Using the previous lemmas and the continuity of $D_p$ up to $p=2^*-1$ (Lemma \ref{lemma:convergence_Deps}) we show next the validity of \eqref{r} for $p=2^*-1$.

\begin{lemma}\label{lemma:D_p=1/Lambda}
It holds that
\[
\Lambda_{2^*-1}=\frac{1}{D_{2^*-1}}.
\]
Moreover, $f$ is a maximizer for $D_{2^*-1}$ if and only if $v=|f|^{\frac{1}{p}-1}f$ is a minimizer for $\Lambda_{2^*-1}$.
\end{lemma}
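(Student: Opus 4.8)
The plan is to first upgrade the inequality $\Lambda_{2^*-1}\leq D_{2^*-1}^{-1}$ from Lemma~\ref{lemma:DpLambdap} into an equality by letting $p\nearrow 2^*-1$, and then to deduce the claimed one-to-one correspondence between extremizers (which, as a by-product, shows that $\Lambda_{2^*-1}$ is attained).

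For the equality, I would fix an arbitrary competitor $u\in H^1(\Omega)$ for $\Lambda_{2^*-1}$, that is $\|u\|_{2^*}=1$ and $\int_\Omega|u|^{2^*-2}u=0$ (equivalently $c_{2^*-1}(u)=0$). For $p\in(0,2^*-1)$ one has $u\in L^{p+1}(\Omega)$, so $u_p:=(u+c_p(u))/\|u+c_p(u)\|_{p+1}$ is admissible for $\Lambda_p$ and hence $\Lambda_p\leq \|\nabla u\|_2^2/\|u+c_p(u)\|_{p+1}^2$. The minimality property in Lemma~\ref{eq:auxlemmaPW} gives $\|u+c_p(u)\|_{p+1}\leq\|u\|_{p+1}$, and together with Hölder's inequality this bounds $|c_p(u)|$ uniformly for $p$ close to $2^*-1$. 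Using this bound, pointwise convergence along a subsequence $p_n\nearrow 2^*-1$ with $c_{p_n}(u)\to c^*$, and uniform domination by $1+(|u|+C)^{2^*}\in L^1(\Omega)$, I would pass to the limit in $\int_\Omega|u+c_{p_n}(u)|^{p_n-1}(u+c_{p_n}(u))=0$ and invoke uniqueness of the correction constant to get $c^*=c_{2^*-1}(u)=0$; hence $c_p(u)\to 0$ and, by a second dominated-convergence argument (with $p+1\to 2^*$), $\|u+c_p(u)\|_{p+1}\to\|u\|_{2^*}=1$ as $p\nearrow 2^*-1$. Therefore $\limsup_{p\nearrow 2^*-1}\Lambda_p\leq\|\nabla u\|_2^2$, and taking the infimum over all such $u$ yields $\limsup_{p\nearrow 2^*-1}\Lambda_p\leq\Lambda_{2^*-1}$. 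On the other hand, Lemma~\ref{lemma:DpLambdap} and Lemma~\ref{lemma:convergence_Deps} give $\Lambda_p=D_p^{-1}\to D_{2^*-1}^{-1}$ as $p\nearrow 2^*-1$, so $D_{2^*-1}^{-1}\leq\Lambda_{2^*-1}$; combined with the reverse inequality of Lemma~\ref{lemma:DpLambdap}, this gives $\Lambda_{2^*-1}=D_{2^*-1}^{-1}$.

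For the correspondence, write $p=2^*-1$. If $f$ maximizes $D_p$ (which exists by Lemma~\ref{lemma:D}), set $v:=|f|^{\frac{1}{p}-1}f=D_p^{-1}K_pf$; then, exactly as in the proof of Lemma~\ref{lemma:DpLambdap}, $\|v\|_{p+1}=1$, $\int_\Omega|v|^{p-1}v=0$ (since $K_pf$ has vanishing nonlinear average), and $\int_\Omega|\nabla v|^2=D_p^{-2}\int_\Omega fK_pf=D_p^{-1}=\Lambda_p$, so $v$ is a minimizer for $\Lambda_p$; in particular $\Lambda_{2^*-1}$ is attained. Conversely, if $v$ minimizes $\Lambda_p$, the Lagrange-multiplier argument from the case $p\geq 1$ of Lemma~\ref{lemma:Lambdap_achieved} applies verbatim at $p=2^*-1$ and gives $-\Delta v=\Lambda_p|v|^{p-1}v$ with $\partial_\nu v=0$; integrating, $f:=|v|^{p-1}v$ has zero average, $\|f\|_{\frac{p+1}{p}}=1$, $Kf=\Lambda_p^{-1}(v-\tfrac{1}{|\Omega|}\int_\Omega v)$, whence $\int_\Omega fKf=\Lambda_p^{-1}\int_\Omega|v|^{p+1}=\Lambda_p^{-1}=D_p$, so $f$ maximizes $D_p$ and $|f|^{\frac{1}{p}-1}f=v$.

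The hard part will be the estimate $\limsup_{p\nearrow 2^*-1}\Lambda_p\leq\Lambda_{2^*-1}$: an admissible function for $\Lambda_{2^*-1}$ only belongs to $L^{2^*}(\Omega)$, while the constraint defining $\Lambda_p$ is a genuinely different, nonlinear one, so $u$ cannot be used directly as a competitor for $\Lambda_p$; the remedy is to control the correction constant $c_p(u)$ and the norm $\|u+c_p(u)\|_{p+1}$ as $p\to 2^*-1$, which rests on the variational characterization of $c_p(u)$ together with dominated convergence. It is crucial that this step does not presuppose that $\Lambda_{2^*-1}$ is attained — attainment is obtained only afterwards, by combining the equality $\Lambda_{2^*-1}=D_{2^*-1}^{-1}$ with the existence of a maximizer for $D_{2^*-1}$ (Lemma~\ref{lemma:D}).
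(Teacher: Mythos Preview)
Your proof is correct and follows essentially the same route as the paper's: both pass from a competitor $u$ for $\Lambda_{2^*-1}$ to one for $\Lambda_p$ via the correction $c_p(u)$, show $c_p(u)\to 0$ and $\|u+c_p(u)\|_{p+1}\to\|u\|_{2^*}$ by dominated convergence, and combine this with $\Lambda_p=D_p^{-1}\to D_{2^*-1}^{-1}$ from Lemmas~\ref{lemma:DpLambdap} and~\ref{lemma:convergence_Deps}. The only differences are cosmetic---the paper casts the key step as a contradiction while you take a direct $\limsup$ and then an infimum over competitors, and you spell out the converse direction of the extremizer correspondence via Lagrange multipliers, whereas the paper simply points to Lemma~\ref{lemma:D}(ii).
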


\begin{proof}
By Lemmas \ref{lemma:convergence_Deps} and \ref{lemma:DpLambdap},
\[
\Lambda_{2^*-1}\leq \frac{1}{D_{2^*-1}}= \lim_{p\nearrow 2^*-1} \frac{1}{D_p}=\lim_{p\nearrow 2^*-1} \Lambda_{p}.
\]
We now argue by contradiction. Assume that
\begin{align*}
 \lim_{p\to 2^*-1}\Lambda_p> \Lambda_{2^*-1}=\inf_{v\in H^1(\Omega),\int_\Omega |v|^{2^*-2}v=0} \frac{\|\nabla v\|_2^2}{\|v\|_{2^*}^{2}}.
\end{align*}
Therefore, there is $w\in H^1(\Omega)\backslash\{0\}$ such that $\int_\Omega |w|^{2^*-2}w=0$ and
\begin{align*}
 a:=\lim_{p\nearrow 2^*-1}\Lambda_p> \frac{\|\nabla w\|_2^2}{\|w\|_{2^*}^{2}}=:b>0.
\end{align*}
But then, there is a sequence $p_n \nearrow 2^*-1$ and $\delta:=\frac{a-b}{2}>0$ satisfying
\begin{align*}
 \Lambda_{p_n}
 > \frac{\|\nabla w\|_2^2}{\|w\|_{2^*}^{2}}+\delta
 =\frac{\|\nabla w\|_2^2}{\|w+c_n\|_{p_n+1}^{2}}\frac{\|w+c_n\|_{p_n+1}^{2}}{\|w\|_{2^*}^{2}}+\delta
 \geq \Lambda_{p_n}\frac{\|w+c_n\|_{p_n+1}^{2}}{\|w\|_{2^*}^{2}}+\delta
 \qquad \text{ for all }n\in\N,
\end{align*}
where $c_n=c_{p_n}(w)\in\R$ is such that $\int_\Omega|w+c_n|^{p_n-1}(w+c_n)=0$.  We claim that 
\[
\|w+c_n\|_{p_n+1}\to \|w\|_{2^*}\text{ as } n\to\infty. 
\]
Once we know this is true, we obtain
\begin{align*}
 \Lambda_{p_n}
 > \Lambda_{p_n}(1+o(1))+\delta=\Lambda_{p_n}+o(1)+\delta>\Lambda_{p_n},
\end{align*}
a contradiction. Therefore $\Lambda_{2^*-1}=\frac{1}{D_{2^*-1}}$, and the second statement of the lemma follows from Lemma \ref{lemma:D}-(ii).

Therefore, we are left with the proof of the claim. Observe that, by Lemma \ref{eq:auxlemmaPW},
 \begin{align*}
  \int_\Omega|w+c_n|^{p_n+1}=\|w+c_n\|_{p_n+1}^{p_n+1}=\min_{c\in\R}\|w+c\|_{p_n+1}^{p_n+1}\leq \|w\|_{p_n+1}^{p_n+1}=\int_\Omega |w|^{p_n+1}\leq \int_\Omega |w+1|^{2^*}.
 \end{align*}
This shows that $c_n$ is bounded. Then, by dominated convergence, $|w+c_n|_{p_n+1}\to |w+c^*|_{2^*}$ as $n\to\infty$, where $c^*=\lim_{n\to\infty}c_n.$ Then, again by dominated convergence,
\begin{align*}
0=\lim_{n\to\infty}\int_\Omega |w+c_n|^{p_n-1}(w+c_n)= \int_\Omega |w+c^*|^{2^*-2}(w+c^*),
\end{align*}
and therefore $c^*=c_{2^*-1}(w)=0$.
\end{proof}

We are now ready to conclude the proofs of the results stated in the introduction regarding $\Lambda_p$.
\begin{proof}[Proof of Theorem \ref{th:NeumannLENS_0}] We split the proof into three parts. 
\smallbreak

\noindent 1) Combining Lemma \ref{lemma:convergence_Deps} (in the dual framework) with Lemmas \ref{lemma:Lambdap_achieved}--\ref{lemma:D_p=1/Lambda},  we conclude $\Lambda_p$ is achieved for $p\in (0,2^*-1]$, extremizers satisfy $-\Delta v_p=\Lambda_p |v_p|^{p-1}v_p$ in $\Omega$ with Neumann b.c., and the map $p\mapsto \Lambda_p$ is continuous.

\smallbreak

\noindent 2) Let $p_n,p \subset (0,2^*-1]$ be such that $p_n\to p$ as $n\to\infty$, and let $v_{p_n}$ be a minimizer for $\Lambda_{p_n}$. By Lemmas \ref{lemma:DpLambdap}--\ref{lemma:D_p=1/Lambda},   $f_{p_n}:=|v_{p_n}|^{p_n-1}v_{p_n}$ is a maximizer for $D_{p_n}$, and by Lemma \ref{lemma:convergence_Deps} there exists $f_{p}\in X_{p}$ achieving $D_{p}$ such that, up to a subsequence,
\[
f_{p_n} \to f_{p} \qquad \text{strongly in } L^{\frac{p+1}{p}}(\Omega)\quad \text{ as }n\to\infty.
\]
Then $v_p:=|f_p|^{\frac{1}{p}-1}f_p$ achieves $\Lambda_p$.
From the converse of the dominated convergence theorem (see for instance \cite[Lemma A.1]{W96}), up to a subsequence we have $|f_{p_n}|\leq h\in L^\frac{p+1}{p}$. Therefore $|v_{p_n}|\leq h^\frac{1}{p}\in L^{p+1}(\Omega)$ and, from dominated convergence,
\[
v_{p_n}\to v_p \text{ strongly in } L^{p+1}(\Omega).
\]

If $0<p<2^*-1$, then, by Lemma~\ref{l:reg}, the sequence $(v_{p_n})_{n\in\N}$ is uniformly bounded in $W^{2,t}(\Omega)$, and, by Sobolev embeddings, $v_{p_n}\to v_p$ in $C^{0,\alpha}(\overline \Omega)$ (up to a subsequence) for any exponent $\alpha\in (0,1)$. Then, by Schauder estimates \cite[Theorem 6.30]{GT98}, $(v_{p_n})_{n\in\N}$ is uniformly bounded in $C^{2,\alpha}(\overline \Omega)$ and since $C^{2,\alpha}(\overline \Omega)\hookrightarrow C^{2,\alpha'}(\overline \Omega)$ is compact whenever $0<\alpha'<\alpha$, we obtain that
\[
v_{p_n} \to v_{p} \quad \text{ in  } C^{2,\alpha'}(\overline \Omega)\quad  \text{ (up to a subsequence)}\quad \text{ as }n\to\infty.
\]
As for the case $p=2^*-1$, observe that
\[
-\Delta v_{p_n} =h_{p_n}(x) v_{p_n},
\]
with $h_{p_n}(x)=\Lambda_{p_n}|v_{p_n}|^{p_n-1} \leq C|h|^\frac{p_n-1}{2^*-1}\leq (1+|h|^\frac{2^*-2}{2^*-1})\in L^\frac{N}{2}(\Omega)$. Then, by Lemma~\ref{lemma:regularitysol}, the sequence $(u_{p_n})$ is bounded in $L^t(\Omega)$ for every $t\geq 1$ and  also  in $W^{2,t}(\Omega)$ for $t\geq 1$ (by Lemma~\ref{l:reg}). Reasoning as before, we have convergence in $C^{2,\alpha}(\overline \Omega)$ for any $\alpha\in (0,1)$.

\smallbreak

\noindent 3) Let $u_p$ be a minimizer of $\Lambda_p$, and let $\psi_1$ be an $L^2$--normalized eigenfunction (i.e. a minimizer of $\Lambda_1$) such that $u_p\to \psi_1$ strongly in $H^1(\Omega)$. We have
\begin{align*}
 \int_\Omega \mu_1 u_p \psi_{1}=\int_\Omega \nabla u_p \nabla \psi_{1}
 =\int_\Omega \Lambda_p |u_p|^{p-1}u_p \psi_{1}
\end{align*}
and therefore
\begin{align*}
 0&=\int_\Omega u_p \psi_{1} \left(\frac{\Lambda_p}{\mu_1}|u_p|^{p-1}-1\right) = \int_\Omega u_p \psi_1 \int_0^1 \frac{d}{ds}  \left\{\left(\left(\frac{\Lambda_p}{\mu_1}\right)^\frac{1}{p-1}|\mu_1| \right)^{s(p-1)}\right\} \ dsdx\\
& =(p-1)\int_\Omega u_p \psi_{1} \int_0^1 \ln
 \left(
 \left(
 \frac{\Lambda_p}{\mu_1}\right)^\frac{1}{p-1}|u_p|
  \right)
  \left|
  \left(
 \frac{\Lambda_p}{\mu_1}\right)^\frac{1}{p-1}|u_p|
 \right|^{s(p-1)}\ ds\, dx\\
 &=(p-1)\int_\Omega\int_0^1
 u_p \psi_{1}
 \left(\frac{1}{p-1}
 \ln
 \left(
  \frac{\Lambda_p}{\mu_1}
  \right)
  +
  \ln
  |u_p|
   \right)
  \left(
 \frac{\Lambda_p}{\mu_1}\right)^s
 \left|
 u_p
 \right|^{s(p-1)}\ ds\, dx.
\end{align*} 
Then, by the strong convergence of $u_p$ to $\psi_1$,
\begin{align*}
 \int_\Omega\int_0^1
 u_p \psi_{1}
 \left(
 \frac{\Lambda_p}{\mu_1}\right)^s
 \left|
 u_p
 \right|^{s(p-1)}\ ds\, dx>0
\end{align*}
for all $p$ sufficiently close to $1$ and 
\begin{align*}
\frac{ \ln
 \left(
  \frac{\Lambda_p}{\mu_1}
  \right)}{p-1}
  =-\frac{
  \int_\Omega\int_0^1
 u_p \psi_{1}\left(
  \ln
  |u_p|
   \right)
  \left(
 \frac{\Lambda_p}{\mu_1}\right)^s
 \left|
 u_p
 \right|^{s(p-1)}\ ds\, dx}{\int_\Omega\int_0^1
 u_p \psi_{1}
 \left(
 \frac{\Lambda_p}{\mu_1}\right)^s
 \left|
 u_p
 \right|^{s(p-1)}\ ds\, dx},
\end{align*}
that is, again by strong convergence,
\begin{align*}
\ln\left(
 \left(
  \frac{\Lambda_p}{\mu_1}
  \right)^\frac{1}{p-1}\right)
  =
\frac{ \ln
 \left(
  \frac{\Lambda_p}{\mu_1}
  \right)}{p-1}
  =-\frac{\int_\Omega
 \psi_{1}^2\left(
  \ln
  |\psi_{1}|
   \right)
  \, dx}{\int_\Omega
 \psi_{1}^2 \, dx}
+o(1)
=-\frac{1}{2}\int_\Omega
 \psi_{1}^2\ln(\psi_{1}^2)+o(1).
\end{align*}
and the proof is finished.

\end{proof}

\begin{remark}\label{bbgv:rmk}
In \cite{BBGV} the authors consider the limit as $p\searrow 1$ of problem
\begin{align*}
 -\Delta u_p = \lambda_2 |u_p|^{p-1}u_p\quad \text{ in }\Omega,\qquad 
 u_p=0\quad \text{ on }\partial \Omega,
\end{align*}
where $\lambda_2$ is the second Dirichlet eigenvalue of the Laplacian in $\Omega$, which is a bounded smooth domain in $\R^N$.  One of their main results \cite[Theorem 4]{BBGV} guarantees that $u_p\to u_*$ in $H^1_0(\Omega)$ as $p\searrow 2$, where $u_*$ is an eigenfunction associated to $\lambda_2$ and such that 
\begin{align}\label{B:cond}
 \int_\Omega u_*^2 \ln(u_*^2) = 0.
\end{align}
Unfortunately, the proof of \cite{BBGV} does not extend directly to the Neumann case.  One of the main obstacles is \cite[Lemma 4.4]{BBGV}, used to show that the limit $u_p$ is nontrivial, which relies on the Poincar\'e inequality ($\|\nabla u_p\|_2\geq \lambda_2 \|u_p\|_2^2$).  The analogue of this result for the Neumann problem is the Poincar\'e-Wirtinger inequality ($\|\nabla u_p\|_2\geq \lambda_2 \|u_p-\overline{u}\|_2^2$, with $\bar u=\int_\Omega u$); however, the presence of a possibly nontrivial $\overline{u}$ does not seems to allow to conclude as in as in \cite[Lemma 4.4]{BBGV}.  Furthermore, the arguments in \cite{BBGV} only apply to superlinear problems (see \cite[Lemma 4.1]{BBGV}).  On the other hand, \cite[Lemma 4.3]{BBGV} can be extended to the Neumann case with only minor changes.  As a consequence, our Theorem \ref{th:NeumannLENS_0} implies that, if $u_1 = e^{-\frac{1}{2}\int_\Omega \psi_{1}^2\ln(\psi_{1}^2)}\psi_{1}$ as in \eqref{u1}, then $u_1$ satisfies~\eqref{B:cond} and it can also be indirectly characterized as the minimizer of a particular functional, as in \cite[Theorem 4]{BBGV}.
\end{remark}

\begin{remark}
 In the Dirichlet problem, bifurcation theory has been used to show that there is a continuous branch of solutions $\{v_p:p\in(0,2^*-1)\}$, see for example \cite[Section 7.4.1]{MW05}. However, this approach does not guarantee that the branch consists of least-energy solutions.
\end{remark}

\begin{remark}\label{GW:rmk}
In \cite{GiraoWeth}, the authors study existence and qualitative properties of extremal functions for Poincar\'e-Sobolev-type inequalities, see \eqref{GW:pb}. This problem is related to ours, but in our case we require the nonlinear condition $\int_\Omega |u|^{p-1}u=0$, see \eqref{Lambdap:def}.  As a consequence, many of the techniques in \cite{GiraoWeth} for problem \eqref{GW:pb} cannot be easily extended to \eqref{Lambdap:def}.  For instance, in \cite{GiraoWeth} it is shown the existence of minimizers for $N=3$ and $p=2^*-1$. This relies on the crucial estimate \cite[Proposition 2.1]{GiraoWeth}, which uses the linearity of the condition $\int_\Omega u=0$ in \eqref{GW:pb}. In fact, the same argument fails when considering the condition $\int_\Omega |u|^{p-1}u=0$, since some remainder terms do not decay sufficiently fast to zero in this setting. We recall that existence of l.e.n.s. for \eqref{Lambdap:def} is an open problem for $N=3$ and $p=2^*-1$.
\end{remark}

\subsection{Results regarding $L_p$ ($p>0$)} \label{sec:L_p}
We start by proving Proposition \ref{prop:rel_Lp_Dp}, which connects $\Lambda_p$ with $L_p$.
\begin{proof}[Proof of Proposition \ref{prop:rel_Lp_Dp}] Let $p\in(0,2^*-1]\backslash\{1\}$ and let $v\in H^1(\Omega)$ be a minimizer for $\Lambda_p$ which, by Theorem \ref{th:NeumannLENS_0}, exists and satisfies $-\Delta v=\Lambda_p |v|^{p-1}v$ in $\Omega$, $\partial_\nu v=0$ on $\partial \Omega$. Then $u:=\Lambda_p^\frac{1}{p-1}v$ satisfies \eqref{eq:LENSNeumann}, hence
\begin{align*}
L_p\leq I_p(u)=\frac{p-1}{2(p+1)}\|\nabla u\|_2^2= \frac{p-1}{2(p+1)}\Lambda_p^\frac{p+1}{p-1}.
\end{align*}
Conversely, since the set of solutions is nonempty by the previous paragraph, for any solution $u$ of \eqref{eq:LENSNeumann} we have that
\begin{align*}
\Lambda_p\leq  \int_\Omega \left|\nabla \frac{u}{\|u\|_{p+1}}\right|^2=\frac{\|\nabla u\|_2^2}{\|u\|_{p+1}^2}=\|\nabla u\|_2^\frac{2(p-1)}{p+1}=\left(\frac{2(p+1)}{p-1}I_p(u)\right)^\frac{p-1}{p+1}.
\end{align*}
By taking the infimum in $u$, the result follows.
\end{proof} 

\begin{proof}[Proof of Theorem \ref{th:NeumannLENS}--2,3,4] The proof of the statements in this theorem for $p>0$ are now a direct consequence of Theorem \ref{th:NeumannLENS_0} and Proposition \ref{prop:rel_Lp_Dp}.
\end{proof}

\begin{proof}[Proof of Proposition \ref{thm:varcha}] 
\noindent 1) For the case $p=0$, see \cite{PW15}.

\medskip

\noindent 2) Let $m_p:=\inf\{I_p(u):\ u\in H^1(\Omega)\setminus \{0\},\ \int_\Omega |u|^{p-1}u=0\}$. Since $L_p$ is achieved (by Proposition \ref{prop:rel_Lp_Dp}), there exists $u_p$ solution to \eqref{eq:LENSNeumann} such that $I_p(u_p)=L_p$. By Proposition \ref{prop:regularity}, $u_p$ is of class $C^2$; hence, by integrating the equation we see that $\int_\Omega |u|^{p-1}u=0$, which yields $L_p=I_p(u_p)\geq m_p.$

Regarding the inequality $m_p \geq L_p$, this is a direct consequence of \cite[Lemma 2.2]{PW15}, where it is shown that $m_p$ is a critical level. To keep this paper as self contained as possible, here we provide an alternative argument. Let $u\in H^1(\Omega)\setminus \{0\}$ be such that $\int_\Omega |u|^{p-1}u=0$. Then $\|u\|_{p+1}^2 \Lambda_p \leq \|\nabla u\|_2^2$ by definition of $\Lambda_p$, and
\[
I_p(u)=\frac{1}{2}\|\nabla u\|_2^2-\frac{1}{p+1}\|u\|_{p+1}^{p+1}\geq \varphi(\|u\|_{p+1}),
\]
where $\varphi(t):=\frac{\Lambda_p}{2}t^2-\frac{1}{p+1}t^{p+1}$ has a global minimum at $t=\Lambda_p^\frac{1}{p+1}$. Hence $I_p(u)\geq (\frac{1}{2}-\frac{1}{p+1})\Lambda_p^\frac{p+1}{p-1}$, and the proof now follows from Proposition \ref{prop:rel_Lp_Dp}.

\smallbreak

\noindent 3) Let $n_p=\inf_{w\in \mathcal{M}_p} I_p(w)$. Clearly $L_p\geq n_p$. As for the reverse inequality, given $u\in \mathcal{M}_p$ we have
\[
\Lambda_p\leq \frac{\|\nabla u\|_2^2}{\|u\|_{p+1}^2}=\|\nabla u\|_2^{\frac{2(p-1)}{p+1}} \iff \| \nabla u\|_2^2\geq \Lambda_p^\frac{p+1}{p-1}.
\]
Therefore $I_p(u)\geq  (\frac{1}{2}-\frac{1}{p+1})\Lambda_p^\frac{p+1}{p-1}$. The proof now follows from Proposition \ref{prop:rel_Lp_Dp} and standard arguments.
\end{proof}

\begin{remark}
In the dual framework, Proposition \ref{thm:varcha}-2,3 reads as follows:
\begin{enumerate}
\item for $p\in (0,1)$, 
\[
L_p=\inf\{\phi_p(f):\ f\in X_p\}.
\]
\item for $p\in (1,2^*-1]$,
\[
L_p= \inf_{w\in \mathcal{N}_p} \phi_p(w)= \inf_{f\in X_p\setminus\{0\}} \sup_{t>0} \phi_p(tw),
\]
where $\mathcal{N}_p$ is the Nehari-type set:
\[
\mathcal{N}_p=\left\{f\in X_p\setminus\{0\}: \phi_p'(f)f=0  \right\}.
\]
\end{enumerate}
\end{remark}

\section{Limit at \texorpdfstring{$p=0$}{1}}\label{p0:sec}

In this section we prove Part 1 of Theorem \ref{th:NeumannLENS}, namely, that the solution $u_p$ of~\eqref{eq:LENSNeumann} converges in $C^{1,\alpha}(\Omega)$ as $p\searrow 0$ to a function $u_0\in H^1(\Omega)$ which is a (weak) solution of 
\begin{align}\label{P0}
 -\Delta u_0 = \sign(u_0) \ \text{ in }\Omega, \qquad 
\partial_\nu u_0=0  \text{ on }\partial \Omega,
\end{align}
where
\begin{align*}
 \sign(u)
 =\begin{cases}
      1, &\text{ if }u>0,\\
      -1, &\text{ if }u<0,\\
      0, &\text{ if }u=0.
  \end{cases}
\end{align*}
Since this nonlinearity is no longer invertible, we cannot use a dual characterization. In this case we rely entirely on a direct approach. 

The solution of~\eqref{P0} can be found variationally (see \cite{PW15}) as follows. Let 
\begin{align*}
 \cM_0:=\left\{ 
 u\in H^1(\Omega)\::\: 
 \Big|
 |\{u>0\}|-|\{u<0\}|
 \Big|\leq |\{u=0\}|
 \right\}.
\end{align*}
Note that $\cM_0$ contains all solutions of \eqref{P0}, since $0=\int_\Omega \sign(u_0)=|\{u_0>0\}|-|\{u_0<0\}|$. Then the least-energy solution $u_0$ is the minimizer of
\begin{align*}
L_0=\inf_{\cM_0}I_0,\qquad  I_0(u)=\frac{1}{2}\int_\Omega |\nabla u|^2-\int_\Omega |u|
\end{align*}
It is known that $L_0$ is achieved by a function $u_0\in\cM_0$, that $u_0\in W^{2,q}_{loc}(\Omega)$ for all $q>0$, and that $u_0$ satisfies~\eqref{P0} pointwisely a.e. in $\Omega$, see \cite[Theorem 5.3]{PW15}.  Recall that the least-energy solution $u_p$ of~\eqref{eq:LENSNeumann} for $p\neq0$ close to 0 is the minimizer of 
\begin{align*}
 L_p=\inf_{\cM_p} I_p,\qquad I_p(u)=\frac{\|\nabla u\|_2^2}{2}-\frac{\|u\|_{p+1}^{p+1}}{p+1},\qquad 
 \cM_p=\{u\in H^1(\Omega)\::\: \int_\Omega |u|^{p-1}u=0\},
\end{align*}
which is achieved by \cite[Theorem 1.1]{PW15} or by Lemma \ref{lemma:D}.

\begin{theo}\label{thm:p0}
 Let $p_n\searrow 0$ as $n\to\infty$ and let $u_{p_n}$ be a l.e.n.s. of \eqref{eq:LENSNeumann}.  Then $L_{p_n}\to L_0$ as $n\to\infty$ and, up to a subsequence, $u_{p_n}\to u_0$ in $C^{1,\alpha}(\Omega)$ for any $\alpha\in(0,1)$ as $n\to\infty$, where $u_0$ is a least-energy solution of \eqref{P0}.
\end{theo}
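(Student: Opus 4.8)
The plan is to prove Theorem \ref{thm:p0} through a two-sided comparison of energy levels combined with a compactness argument for the solutions themselves. The central difficulty is that the functional $I_0$ is not differentiable (the nonlinearity $\sign$ is discontinuous) and the constraint set $\cM_0$ is not a smooth manifold, so the usual Lagrange-multiplier machinery is unavailable; moreover $\cM_p$ and $\cM_0$ are genuinely different constraint sets, so one cannot simply pass to the limit inside a fixed variational problem. I expect this mismatch between the constraints — and the proof that a limit of functions in $\cM_{p_n}$ lands in $\cM_0$ — to be the main obstacle.

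\textbf{Step 1: Upper bound $\limsup_n L_{p_n}\le L_0$.} Take the least-energy solution $u_0\in\cM_0$ of \eqref{P0}. Since $u_0$ is a nontrivial classical-type solution, $\int_\Omega\sign(u_0)=0$, i.e. $|\{u_0>0\}|=|\{u_0<0\}|$. Using $u_0$ (suitably modified by subtracting the $c_{p_n}$-constant from Lemma \ref{eq:auxlemmaPW}, or a convenient rescaling) as a test function for $L_{p_n}=\inf_{\cM_{p_n}}I_{p_n}$, one checks $c_{p_n}(u_0)\to c_0(u_0)$; since $u_0\in\cM_0$ one can arrange the limiting correction to vanish, and then $I_{p_n}(u_0+c_{p_n}(u_0))\to I_0(u_0)=L_0$ by dominated convergence (using $\|u_0\|_{p_n+1}^{p_n+1}\to \|u_0\|_1$ and $\|u_0\|_{p_n+1}\to 1$ after normalization). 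This gives $\limsup_n L_{p_n}\le L_0$; in particular $(L_{p_n})$ is bounded.

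\textbf{Step 2: Uniform estimates and compactness of $(u_{p_n})$.} From the bound on $L_{p_n}=I_{p_n}(u_{p_n})$ together with $I_{p_n}'(u_{p_n})u_{p_n}=0$ (so $\|\nabla u_{p_n}\|_2^2=\|u_{p_n}\|_{p_n+1}^{p_n+1}$) one extracts, via Young/Hölder and Poincaré--Wirtinger (controlling $\bar u_{p_n}$ through $\int|u_{p_n}|^{p_n-1}u_{p_n}=0$, hence $|\bar u_{p_n}|\le \|u_{p_n}\|_{p_n}$-type bounds), a uniform $H^1(\Omega)$ bound. Then $-\Delta u_{p_n}=|u_{p_n}|^{p_n-1}u_{p_n}$ with right-hand side bounded in every $L^t$ (for $n$ large, since $p_n\to 0$ makes the nonlinearity nearly bounded; more precisely $\||u_{p_n}|^{p_n-1}u_{p_n}\|_t=\|u_{p_n}\|_{p_nt}^{p_n}$ is controlled by the $H^1$-bound and Sobolev for fixed $t$), so Lemma \ref{l:reg} gives a uniform $W^{2,t}$ bound for each $t$, and Sobolev embedding yields a uniform $C^{1,\alpha}(\overline\Omega)$ bound. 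Hence, up to a subsequence, $u_{p_n}\to u_0$ in $C^{1,\alpha}(\overline\Omega)$ for every $\alpha\in(0,1)$, and also weakly in $W^{2,t}$, with $u_0\in H^1(\Omega)$.

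\textbf{Step 3: The limit lies in $\cM_0$, solves \eqref{P0}, and is nontrivial.} Passing to the limit in $-\Delta u_{p_n}=|u_{p_n}|^{p_n-1}u_{p_n}$: on $\{u_0\ne 0\}$ one has $u_{p_n}\to u_0$ locally uniformly and $|u_{p_n}|^{p_n-1}u_{p_n}\to \sign(u_0)$ pointwise, boundedly; testing against $\varphi\in C_c^\infty(\{u_0\ne0\})$ shows $-\Delta u_0=\sign(u_0)$ there, and standard arguments (cf. \cite[Theorem 5.3]{PW15}) extend this a.e. to $\Omega$, giving $\Delta u_0=0$ a.e. on $\{u_0=0\}$. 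Nontriviality: if $u_0\equiv 0$, then $\|\nabla u_{p_n}\|_2\to 0$, forcing $L_{p_n}=(\tfrac12-\tfrac1{p_n+1})\|\nabla u_{p_n}\|_2^2\to 0$, contradicting the lower bound $L_{p_n}\ge(\tfrac12-\tfrac1{p_n+1})\Lambda_{p_n}^{(p_n+1)/(p_n-1)}$ — wait, that exponent degenerates at $p=0$; instead use directly that any admissible $u$ with $\int|u|^{p_n-1}u=0$ satisfies $\|\nabla u\|_2^2\ge c\|u\|_{p_n+1}^2$ with $c$ bounded below (a Poincaré-type constant on the subspace, uniform in $n$), whence $L_{p_n}\ge \kappa>0$ uniformly; this rules out $u_0\equiv 0$. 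Finally, from $\int_\Omega|u_{p_n}|^{p_n-1}u_{p_n}=0$ and dominated convergence on $\{u_0\ne0\}$ together with the bound on $|\{u_{p_n}=0\}|\le$ (small, by $C^1$-convergence and the regularity of the nodal set of $u_0$ — or simply pass to the limit to get $\int_\Omega\sign(u_0)=0$, i.e. $|\{u_0>0\}|=|\{u_0<0\}|$), we conclude $u_0\in\cM_0$.

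\textbf{Step 4: Lower bound and conclusion.} Since $u_0\in\cM_0$, by definition $I_0(u_0)\ge L_0$. On the other hand, by lower semicontinuity of $\|\nabla\cdot\|_2^2$ and $\|u_{p_n}\|_{p_n+1}^{p_n+1}\to\|u_0\|_1$ (dominated convergence, $C^1$-bound), one gets $I_0(u_0)\le\liminf_n I_{p_n}(u_{p_n})=\liminf_n L_{p_n}$. Combining with Step 1, $L_0\le I_0(u_0)\le\liminf_n L_{p_n}\le\limsup_n L_{p_n}\le L_0$, so all inequalities are equalities: $L_{p_n}\to L_0$, $I_0(u_0)=L_0$, and $u_0$ is a least-energy solution of \eqref{P0}. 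This proves the theorem.
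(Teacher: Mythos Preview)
Your overall strategy --- two-sided energy comparison, $H^1$/$W^{2,t}$ compactness, identification of the limit in $\cM_0$ --- is the paper's. One difference worth noting: the paper obtains the $H^1$ bound \emph{independently} of any bound on $L_{p_n}$, via the minimizing property $\|u_{p_n}\|_{p_n+1}=\min_c\|u_{p_n}+c\|_{p_n+1}$ (Lemma~\ref{eq:auxlemmaPW}) combined with H\"older and Poincar\'e--Wirtinger. Your Step~2 seems to lean on Step~1's upper bound for $L_{p_n}$, but since $L_{p_n}=\bigl(\tfrac12-\tfrac1{p_n+1}\bigr)\|\nabla u_{p_n}\|_2^2<0$ in the sublinear regime, an upper bound on $L_{p_n}$ gives a \emph{lower} bound on $\|\nabla u_{p_n}\|_2^2$, not an upper one --- so the Poincar\'e--Wirtinger argument you allude to is really the active ingredient, not the bound on $L_{p_n}$.

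There is one genuine error in Step~3: the claim ``$L_{p_n}\ge\kappa>0$ uniformly'' is false, for the sign reason just mentioned. Your first instinct was correct and should not have been abandoned (the exponent $\tfrac{p+1}{p-1}$ does \emph{not} degenerate as $p\to0$; it tends to $-1$): if $u_0\equiv0$ then by the $C^1$-convergence from Step~2 one has $\|\nabla u_{p_n}\|_2\to0$, hence $L_{p_n}\to0$, contradicting Step~1's $\limsup_n L_{p_n}\le L_0<0$ (recall $I_0(w)=-\tfrac12\|w\|_1<0$ for any nontrivial solution of~\eqref{P0}). Alternatively --- and this is what the paper does --- simply postpone nontriviality to Step~4, where $I_0(u_0)=L_0<0$ forces $u_0\not\equiv0$ automatically. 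Finally, your attempt to pass to the limit \emph{in the PDE} on $\{u_0=0\}$ is delicate ($|u_{p_n}|^{p_n-1}u_{p_n}$ need not converge pointwise there) and unnecessary: once $u_0\in\cM_0$ and $I_0(u_0)=L_0$, Proposition~\ref{thm:varcha}(1) (i.e.\ \cite[Theorem~5.3]{PW15}) already yields that $u_0$ solves~\eqref{P0}.
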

\begin{proof}
 Observe that $(u_{p_n})$ is a bounded sequence in $H^1(\Omega)$; indeed, since $\|u_{p_n}\|_{p_n+1}=\min\limits_{c\in\R}\|u_{p_n}+c\|_{p_n+1}$, by Lemma \ref{eq:auxlemmaPW}, and $I_{p_n}(u_{p_n})=(\frac{1}{2}-\frac{1}{p_n+1})\|u_{p_n}\|_{p_n+1}^{p_n+1}<0$, we deduce that
 \begin{align*}
  \|\nabla u_{p_n}\|_2^2 
  &<\frac{2}{p_n+1}\|u_{p_n}\|^{p_n+1}_{p_n+1}
  =\frac{2}{p_n+1}\min\limits_{c\in\R}\|u_{p_n}+c\|^{p_n+1}_{p_n+1}\\
  &\leq 2|\Omega|^{\frac{1-p_n}{2}}\min\limits_{c\in\R}\|u_{p_n}+c\|^{p_n+1}_{2}
  \leq 2|\Omega|^{\frac{1-p_n}{2}}\mu_1^{\frac{p_n+1}{2}}\|\nabla u_{p_n}\|^{p_n+1}_{2},
 \end{align*}
and therefore $u_{p_n}$ is bounded in $H_1(\Omega)$.  Then there is $\widehat u\in H^1(\Omega)$ such that $u_{p_n}\rightharpoonup \widehat u$ in $H^1(\Omega)$ and $u_{p_n}\to \widehat u$ in $L^q(\Omega)$ for all $1\leq q<2^*$ as $n\to\infty$. In particular, by \cite[Lemma A.1]{W96} and dominated convergence,
\begin{align*}
 0 = \lim_{n\to\infty}\int_\Omega |u_{p_n}|^{p_n-1}u_{p_n} = \int_\Omega \sign(\widehat u)=|\{u>0\}|-|\{u<0\}|.
\end{align*}
In particular, $\widehat u\in \cM_0$.  But then
\begin{align}
 \liminf_{n\to\infty} L_{p_n} = \liminf_{n\to\infty}\left(
 \frac{\|\nabla u_{p_n}\|_2^2}{2}-\frac{\|u_{p_n}\|_{p_n+1}^{p_n+1}}{p_n+1}\right)
 \geq \left(
 \frac{\|\nabla \widehat u\|_2^2}{2}-\|\widehat u\|_{1}\right)\geq L_0.\label{l01}
\end{align}
On the other hand, by Lemma \ref{eq:auxlemmaPW}, there is a bounded sequence $(c_{n})\subset\R$ such that $u_0+c_{n}\in\cM_{p_n}$. Up to a subsequence, there is $\widehat c\in \R$ such that $c_n\to\widehat c$ as $n\to\infty$, and then
\begin{align}
 \limsup_{n\to\infty}L_{p_n}\leq \frac{\|\nabla u_0\|_2^2}{2}
 -\limsup_{n\to\infty}\frac{\|u_0+c_{n}\|_{{p_n}+1}^{{p_n}+1}}{{p_n}+1}
 &\leq \frac{\|\nabla u_0\|_2^2}{2}-\|u_0+\widehat c\|_{1}\notag\\
 &\leq \frac{\|\nabla u_0\|_2^2}{2}-\|u_0\|_{1} = L_0,\label{l02}
\end{align}
where we used that $\|u_0\|_{1}=\min_{c\in \R}\|u_0+c\|_{1}$, by Lemma \ref{eq:auxlemmaPW}.  But then~\eqref{l01} and~\eqref{l02} imply that $\lim_{n\to\infty}L_{p_n}=L_0$.
\medskip
Next, using the lower-semicontinuity of the gradient norm,
\begin{align}\label{lpl0}
 L_0=\liminf_{n\to\infty}L_{p_n}=\liminf_{n\to\infty}I_{p_n}(u_{p_n})\geq I_0(\widehat u)\geq L_0,
\end{align}
and therefore $\widehat u$ is a least-energy solution of~\eqref{P0}. Moreover, from~\eqref{lpl0}, we can deduce that $\|\nabla u_{p_n}\|_2^2\to \|\nabla \widehat u\|_2^2$ and therefore $u_{p_n}\to \widehat u$ in $H^1(\Omega)$.  Finally, by \cite[Lemma A.1]{W96}, there is $U\in L^1(\Omega)$ such that $|u_{p_n}|\leq |U|$ and, since $u_{p_n}$ is a $C^{2}(\Omega)$ solution of~\eqref{eq:LENSNeumann}, we have that
\begin{align*}
\int_{\Omega} |\Delta u_{p_n}|^{q} = \int_{\Omega}|u_{p_n}|^{p_nq} 
\leq  \int_{\Omega}(1+|U|)^{p_nq} \leq \int_{\Omega}(1+|U|)
\end{align*}
for all $n$ sufficiently large.  But then $(u_{p_n})$ is bounded in $W^{2,q}(\Omega)$ for any $q \geq 1$, and then, by Sobolev embeddings, $(u_{p_n})\subset C^{1,\alpha}(\Omega)$ for any $\alpha\in(0,1)$.  Since $C^{1,\alpha}(\Omega)$ is compactly embedded in $C^{1,\alpha'}(\Omega)$ for every $0<\alpha'<\alpha<1$, we have that $u_{p_n}\to \widehat u$ in $C^{1,\alpha'}(\Omega)$ for every $\alpha'\in(0,1)$, as claimed.
\end{proof}

Parts 2 to 4 of Theorem \ref{th:NeumannLENS} together with the continuity of the map $p\mapsto L_p$ for $p\in (0,\frac{N+2}{N-2}]\backslash\{1\}$ were proved in the previous section. Now we finish the proof by showing Part 1 and the continuity of $p\mapsto L_p$ at $p=0$.

\begin{proof}[Proof of Theorem \ref{th:NeumannLENS}-1] Item 1 and the continuity of $p\mapsto L_p$ at $p=0$ are a direct consequence of Theorem \ref{thm:p0}.
\end{proof}

\section{Symmetry, monotonicity, and symmetry-breaking}\label{symm:sec}
Let $N\geq 4$,
\begin{align*}
p_0:=2^*-1=\frac{N+2}{N-2},\qquad \cI:=(0,p_0],
\end{align*}
 and, for $p\in\cI$, let $u_p$ denote a least-energy solution of~\eqref{eq:LENSNeumann} with $\Omega$ being a ball or an annulus. In \cite[Corollary 1.4]{ST17}  it is shown that $u_p$ is foliated Schwarz symmetric and not radially symmetric if $0<p<p_0$ (see also \cite[Theorem 1.1]{PW15}). For $p=0$, in \cite[Theorem 1.2]{PW15} it is shown that $u_0$ is foliated Schwarz symmetric if $\Omega$ is a ball or an annulus, being not radial if $\Omega$ is a ball (up to our knowledge, the symmetry breaking on the annulus is an open problem).  In this section we extend these results to the critical case $p=p_0$, using the variational characterization~\eqref{D} and polarizations,  and we also show some new monotonicity results in the supercritical case if $\Omega$ is an annulus. 

\begin{remark}\label{rem:Pohozaev}
In the critical case $p=p_0:=2^*-1$, the fact that $u_{p_0}$ is \emph{not} radially symmetric can  be deduced from the Pohozaev identity if $\Omega$ is a ball, but in the case of annuli this necessarily requires another argument. Indeed, if $u_{p_0}$ were radial, by the proof of \cite[Theorem 2.1]{C10}, we would have that
\[
\left(\frac{r^N}{2N}(u_{p_0}'(r))^2+\frac{N-2}{2N}r^{N-1} u_{p_0}'(r)u_{p_0}(r)+\frac{N-2}{2N^2}r^N |u_{p_0}(r)|^\frac{2N}{N-2}\right)'=0,\ r=|x|.
\]
Thus, if $\Omega$ is a ball ($r\in [0,1)$), integrating between the origin (where $u_{p_0}'(0)=0$) and the first zero $r_0$ of $u_{p_0}$, we obtain that $\frac{r_0^N}{2N}(u'(r_0))^2=0,$ a contradiction by Hopf's Lemma or by the unique continuation principle.

However, if $\Omega$ is an annulus ($r\in (a,1)$ with $a>0$), then no contradiction arises, since integrating between $a$ (where $u_{p_0}'(a)=0$) and the first  zero $r_0$ yields
\[
\frac{N-2}{2N^2}a^N|u_{p_0}(a)|^\frac{2N}{N-2}=\frac{r_0^N}{2N}(u'(r_0))^2.
\]
\end{remark}

\subsection{Foliated Schwarz symmetry}\label{fss:ssec}

We introduce some standard notation. Let $N\geq 4$, $\Sn=\{x\in\mathbb R^N: |x|=1\}$ be the unit sphere, and fix $e\in \Sn$. We consider the halfspace $H(e):=\{x\in \mathbb R^N: x\cdot e>0\}$ and the half domain $\Omega(e):=\{x\in \Omega: x\cdot e>0\},$ where $\Omega$ is either a ball or an annulus.  

The composition of a function $w:\overline{\Omega}\to\R$ with a reflection with respect to $\partial H(e)$ is denoted by $w_e$, that is,
\begin{align*}
 w_e: \overline{\Omega}\to\R\qquad \text{ is given by }\qquad w_e(x):=w(x-2(x\cdot e)e).
\end{align*}
 
The \emph{polarization} $u^H$ of $u:\overline{\Omega}\to \R$ with respect to a hyperplane $H=H(e)$ is given by
\begin{align*}
 u^{H}:=\begin{cases} 
      \max\{u,u_e\} & \text{ in }\overline{\Omega(e)},\\
      \min\{u,u_e\} & \text{ in }\overline{\Omega}\ \backslash \, \overline{\Omega(e)}.
 \end{cases}
\end{align*}

We say that $u\in C(\overline{\Omega})$ is \textit{foliated Schwarz symmetric with respect to some unit vector $e^*\in \Sn$} if $u$ is axially symmetric with respect to the axis $\mathbb R e^*$ and nonincreasing in the polar angle $\theta:= \operatorname{arccos}(\frac{x}{|x|}\cdot e^*)\in [0,\pi].$ We use the following characterization of foliated Schwarz symmetry given in \cite{B03} (see also \cite{SW12} and \cite{W10}).
\begin{lemma}[Particular case of Proposition 3.2 in \cite{SW12}]\label{l:char}
There is $e^*\in\Sn$ such that $u\in C(\overline{\Omega})$ is foliated Schwarz symmetric with respect to $e^*$ if and only if for each $e\in \Sn$ either 
 \begin{align}\label{char}
u\geq u_e\quad \text{ in }\Omega(e)\qquad \text{ or }\qquad u\leq u_e\quad \text{ in }\Omega(e).
 \end{align}
\end{lemma}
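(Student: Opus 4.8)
The plan is to prove the biconditional by treating the two implications separately; the forward one is elementary spherical geometry, while the converse rests on a structural analysis of the set of admissible reflection directions, whose analytic core is a known polarization lemma. Throughout I write $R_e x:=x-2(x\cdot e)e$ for the reflection across $\partial H(e)$, so that $u_e=u\circ R_e$, and $\theta(x):=\arccos\!\left(\frac{x\cdot e^*}{|x|}\right)$ for the polar angle relative to $e^*$.

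For the implication ``$\Rightarrow$'', assume $u$ is foliated Schwarz symmetric with respect to $e^*$. For $x\in\Omega(e)$ one computes $x\cdot e^*-R_ex\cdot e^*=2(x\cdot e)(e\cdot e^*)$, which has the sign of $e\cdot e^*$ since $x\cdot e>0$; as $|R_ex|=|x|$, this gives $\theta(x)\le\theta(R_ex)$ when $e\cdot e^*\ge0$ and $\theta(x)\ge\theta(R_ex)$ when $e\cdot e^*\le0$. Since $u$ is nonincreasing in $\theta$, it follows that $u\ge u_e$ in $\Omega(e)$ in the first case and $u\le u_e$ in $\Omega(e)$ in the second; and when $e\cdot e^*=0$ the points $x$ and $R_ex$ have the same modulus and the same polar angle, so axial symmetry gives $u(x)=u_e(x)$, so that \eqref{char} holds. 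Hence \eqref{char} holds for every $e\in\Sn$.

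For ``$\Leftarrow$'', first dispose of the degenerate case: if $u=u_e$ for every $e\in\Sn$ then $u$ is invariant under the full orthogonal group, hence radial, hence trivially foliated Schwarz symmetric with respect to any $e^*$. Assume this is not the case and set $U:=\{e\in\Sn:\ u\ge u_e\text{ in }\Omega(e)\}$. Since $u_{-e}=u_e$ and $\Omega(-e)=\Omega\setminus\overline{\Omega(e)}$, the condition ``$u\le u_e$ in $\Omega(e)$'' is equivalent to $-e\in U$, so the hypothesis \eqref{char} says precisely that $U\cup(-U)=\Sn$. Moreover $U$ is closed (since $u$ is uniformly continuous on the compact set $\overline\Omega$, the map $e\mapsto u_e$ is continuous into $C(\overline\Omega)$, and each defining inequality is closed), and $U\ne\Sn$ (otherwise $u=u_e$ for all $e$). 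The crux is the \emph{geodesic convexity} of $U$: if $e_1,e_2\in U$ are not antipodal, the shorter geodesic arc of $\Sn$ joining them lies in $U$. This is exactly the polarization lemma underlying \cite[Proposition 3.2]{SW12} (see also \cite{B03,W10}): invariance of $u$ under the polarizations in the directions $e_1$ and $e_2$ propagates to all intermediate directions by composing the reflections $R_{e_1},R_{e_2}$, iterating, and passing to the limit via the continuity of $u$; I would reproduce that argument. Granting it, a proper closed geodesically convex subset of $\Sn$ satisfying $U\cup(-U)=\Sn$ is necessarily a closed hemisphere $U=\{e\in\Sn:\ e\cdot e^*\ge0\}$ for a unique $e^*\in\Sn$ — geodesic convexity places $U$ inside some closed hemisphere $\{e\cdot e^*\ge0\}$, and $U\cup(-U)=\Sn$ together with closedness turns the inclusion into an equality.

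Finally I would read off foliated Schwarz symmetry with respect to this $e^*$. If $e\cdot e^*=0$, then $e,-e\in U$, so $u\ge u_e$ in $\Omega(e)$ and $u\ge u_{-e}=u_e$ in $\Omega(-e)$, whence $u=u_e$ on $\Omega$; thus $u$ is invariant under every reflection in a hyperplane containing the axis $\R e^*$, and since these generate all the orthogonal maps fixing $e^*$, the function $u$ is axially symmetric about $\R e^*$. For the monotonicity, let $x,y$ lie on a common sphere $\partial B_r\subset\overline\Omega$ with $x\ne y$ and $\theta(x)\le\theta(y)$; then $e:=(x-y)/|x-y|\in\Sn$ satisfies $R_ex=y$, $x\cdot e=|x-y|/2>0$ (so $x\in\Omega(e)$), and $e\cdot e^*=r(\cos\theta(x)-\cos\theta(y))/|x-y|\ge0$, hence $e\in U$ and $u(x)\ge u_e(x)=u(y)$. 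So $u$ is nonincreasing in the polar angle on every sphere, and combined with axial symmetry this is exactly foliated Schwarz symmetry with respect to $e^*$. The one genuine obstacle is the geodesic convexity of $U$: all the analytic weight sits in the composition-of-polarizations estimate, which is the heart of \cite{B03,SW12,W10}, while everything else is bookkeeping.
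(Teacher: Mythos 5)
Your forward direction is fine, and so is the reduction of the converse to showing that $U=\{e\in\Sn:\ u\ge u_e\text{ in }\Omega(e)\}$ is a closed hemisphere (the equivalence ``$u\le u_e$ in $\Omega(e)\iff -e\in U$,'' the closedness of $U$, and the final read-off of axial symmetry and angular monotonicity from $U=\{e\cdot e^*\ge0\}$ are all correct). But the step you flag as the crux — the \emph{unconditional} geodesic convexity of $U$ — is false, and no argument from \cite{SW12,B03,W10} can deliver it because the statement itself fails. Take $\Omega$ an annulus in $\R^N$, $N\ge2$, and $u(x)=f(|x|)\,\dfrac{x_1^2-x_2^2}{|x|^2}$ with $f\not\equiv0$. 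With $e_1=(1,0,\dots,0)$, $e_2=(0,1,0,\dots,0)$ one has $R_{e_1}x=(-x_1,x_2,\dots)$, $R_{e_2}x=(x_1,-x_2,x_3,\dots)$, hence $u_{e_1}=u_{e_2}=u$: both $e_1,e_2\in U$, indeed with full reflection symmetry, not just polarization invariance. Yet for the geodesic midpoint $e=(e_1+e_2)/\sqrt2$ one computes $R_ex=(-x_2,-x_1,x_3,\dots)$, so $u_e=-u$ and $u-u_e=2u$ changes sign on $\Omega(e)=\{x_1+x_2>0\}$; thus $e\notin U$. Membership of $e_1,e_2$ in $U$ does \emph{not} ``propagate to intermediate directions by composing reflections'': here $\langle R_{e_1},R_{e_2}\rangle$ is a finite group, and even with an irrational angle the iteration device only exploits genuine reflection invariance, which mere polarization invariance does not supply.

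The same example explains what went wrong structurally: for this $u$ one has $e\notin U$ \emph{and} $-e\notin U$, so $U\cup(-U)\ne\Sn$ — the hypothesis of the lemma fails. This tells you that ``$U\cup(-U)=\Sn$'' cannot be parked until after convexity has been established; it is the load-bearing hypothesis and must enter the argument from the start. The proof you cite from \cite{SW12} (see also \cite{B03,W10}) is organized around that hypothesis directly: one fixes a radius $r$ with $u|_{\partial B_r}$ nonconstant (the radial case being trivial), chooses $e^*$ maximizing $u(r\,\cdot)$ on $\Sn$, and then uses the dichotomy \eqref{char} together with the maximality of $e^*$ to force $e\in U$ for every $e$ with $e\cdot e^*>0$; the hemisphere structure of $U$ is the \emph{conclusion} of that argument, not a geometric property of $U$ provable in isolation. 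To repair your proof you would need to replace the geodesic-convexity claim by that maximum-point argument. (For context: the paper offers no proof of this lemma, quoting \cite[Proposition~3.2]{SW12} directly, so there is no in-paper argument to compare against.)
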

We are ready to show the symmetry result. For $p<p_0$, this is a consequence of \cite{ST17}, which deals with the more general situation of Lane-Emden systems in the subcritical regime. However, the variational characterization therein is not the one associated to $D_p$ given in \eqref{D} and, for that reason, the proof is divided into sublinear and superlinear cases (see Theorems 4.8 and 4.9 in \cite{ST17}). Here we present a proof for the new case $p=p_0$. Since the argument also gives a unified approach for all $p\in \mathcal{I}$, we prove symmetry for all those ranges of $p$, for completeness.

\begin{theo}\label{th:sym}
 Let $\Omega$ be either a ball or an annulus centered at the origin of $\R^N$, take $p\in (0,2^*-1]\backslash\{1\}$, and let $f\in X_{p}$ be a maximizer of $D_{p}$ defined in~\eqref{D} and $u:=D_{p}^{\frac{1}{p-1}}|f|^{\frac{1}{p}-1}f$. Then there is $e^*\in\Sn$ such that $u$ and $f$ are foliated Schwarz symmetric with respect to $e^*$.
\end{theo}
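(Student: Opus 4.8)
The plan is to run a polarization argument in the \emph{dual} variable $f$: because $f$ is an extremizer of $D_p$, I will show that every polarization $f^H$ is again an extremizer, deduce from the equality cases that the polarization of the associated solution $u$ solves the same equation, and finally compare $u$ with its polarization by the strong maximum principle. Throughout, fix $e\in\Sn$, set $H=H(e)$, and use the standard properties of the polarization $g\mapsto g^H$ on the symmetric domain $\Omega$ (see \cite{ST17} and the references therein): it is a rearrangement, so it preserves every $L^q$-norm, the average $\int_\Omega g$, and more generally $\int_\Omega F(g^H)=\int_\Omega F(g)$ for any Borel $F$; it preserves the Dirichlet integral, $\|\nabla g^H\|_2=\|\nabla g\|_2$ for $g\in H^1(\Omega)$; it commutes with nondecreasing functions and with the addition of constants; and it satisfies the Hardy--Littlewood inequality $\int_\Omega g^H h^H\geq\int_\Omega gh$. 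Recall also that $Kg$ is the unique minimizer over $\{z\in H^1(\Omega):\int_\Omega z=0\}$ of $z\mapsto\frac{1}{2}\|\nabla z\|_2^2-\int_\Omega gz$, with minimal value $-\frac{1}{2}\int_\Omega gKg=-\frac{1}{2}\|\nabla Kg\|_2^2$. With $v:=Kf$ and $c:=c_p(v)$, Lemma~\ref{lemma:D}(ii) gives $v+c=K_pf=D_p|f|^{\frac{1}{p}-1}f$, so $u$ is a positive multiple of $v+c$ and $f=\gamma\,|v+c|^{p-1}(v+c)$ for some $\gamma>0$; by Proposition~\ref{prop:rel_Lp_Dp} and Proposition~\ref{prop:regularity}, $u\in C^{2,\alpha}(\overline\Omega)$ for every $\alpha\in(0,1)$ and $-\Delta u=\kappa|u|^{p-1}u$ in $\Omega$, $\partial_\nu u=0$ on $\partial\Omega$, for some constant $\kappa=\kappa(p)>0$.

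Next I would prove that $f^H$ also achieves $D_p$. Clearly $f^H\in X_p$ and $\|f^H\|_{\frac{p+1}{p}}=1$, while combining the minimizing property of $K$, the invariance of the Dirichlet integral, the Hardy--Littlewood inequality, and the identities $\int_\Omega fv=\|\nabla v\|_2^2=D_p$ yields
\begin{align*}
-\tfrac{1}{2}\int_\Omega f^HKf^H
&\leq \tfrac{1}{2}\|\nabla v^H\|_2^2-\int_\Omega f^Hv^H
= \tfrac{1}{2}\|\nabla v\|_2^2-\int_\Omega f^Hv^H \\
&\leq \tfrac{1}{2}\|\nabla v\|_2^2-\int_\Omega fv
= -\tfrac{1}{2}D_p,
\end{align*}
so $\int_\Omega f^HKf^H\geq D_p$; by the definition~\eqref{D} of $D_p$, equality holds throughout. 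In particular $v^H$ minimizes $z\mapsto\frac{1}{2}\|\nabla z\|_2^2-\int_\Omega f^Hz$ over average-zero $z$, so by strict convexity $K(f^H)=v^H$.

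Then I would show that the polarization $u^H$ of $u$ solves the \emph{same} Neumann problem. Since $u$ is a positive multiple of $v+c$ and polarization commutes with positive scalings and with adding the constant $c$, $u^H$ is that same multiple of $v^H+c$; since moreover $-\Delta v^H=-\Delta K(f^H)=f^H$ and $f^H=\gamma\,|v^H+c|^{p-1}(v^H+c)$ (polarization commuting with $t\mapsto|t|^{p-1}t$), a direct computation gives $-\Delta u^H=\kappa|u^H|^{p-1}u^H$ in $\Omega$, $\partial_\nu u^H=0$ on $\partial\Omega$, with the \emph{same} $\kappa>0$; as $u^H\neq 0$, Proposition~\ref{prop:regularity} gives $u^H\in C^{2,\alpha}(\overline\Omega)$. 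On $\Omega(e)$ the function $\varphi:=u^H-u=(u_e-u)^+$ is nonnegative and of class $C^2$, and $-\Delta\varphi=\kappa\bigl(|u^H|^{p-1}u^H-|u|^{p-1}u\bigr)\geq 0$ there, since it vanishes where $u^H=u$ and is positive where $u^H=u_e>u$ (strict monotonicity of $t\mapsto|t|^{p-1}t$). As $\Omega(e)$ is connected (a half-ball, resp.\ a half-annulus), the strong maximum principle forces either $\varphi\equiv 0$ on $\Omega(e)$, i.e.\ $u\geq u_e$ in $\Omega(e)$, or $\varphi>0$ on $\Omega(e)$, i.e.\ $u\leq u_e$ in $\Omega(e)$; in either case \eqref{char} holds for this $e$. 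Since $e\in\Sn$ is arbitrary, Lemma~\ref{l:char} yields $e^*\in\Sn$ with $u$ foliated Schwarz symmetric with respect to $e^*$, and since $f$ is an increasing function of $u$ (namely $f=\gamma\,|v+c|^{p-1}(v+c)$ with $v+c$ a positive multiple of $u$), the same holds for $f$.

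The main obstacle is the identity $K(f^H)=(Kf)^H$ that makes $u^H$ a solution: polarization does \emph{not} commute with the solution operator $K$ for a generic $f$, and it is precisely the extremality of $f$---through the equality cases of the Dirichlet-energy invariance and of the Hardy--Littlewood inequality, together with the strict convexity of the dual functional---that forces this identity. Once it is in hand, the concluding comparison is the by-now-classical ``polarization plus strong maximum principle'' dichotomy; note that because $t\mapsto|t|^{p-1}t$ and $t\mapsto|t|^{\frac{1}{p}-1}t$ are increasing for \emph{every} $p>0$, no distinction between the sublinear, superlinear, and critical regimes is required.
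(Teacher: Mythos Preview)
Your proof is correct and follows a genuinely different route from the paper's. Both arguments fix a halfspace $H=H(e)$ and aim to verify the dichotomy~\eqref{char} so as to apply Lemma~\ref{l:char}, but the mechanisms differ.

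The paper does \emph{not} establish the identity $K(f^H)=(Kf)^H$. Instead it defines $\widetilde u:=D_p^{-\frac{p}{p-1}}K_p(f^H)$ (without knowing this equals $u^H$), observes that $U:=u+u_e-\widetilde u-\widetilde u_e$ is harmonic with Neumann data, hence constant, and then introduces two auxiliary functions $w_1=\widetilde u-u+k/2$ and $w_2=\widetilde u-u_e+k/2$ that solve mixed Dirichlet--Neumann problems on $\Omega(e)$ with nonnegative right-hand side. The maximum principle forces $w_1,w_2\geq 0$, and an explicit algebraic identity then gives $\int_\Omega fKf-\int_\Omega f^H Kf^H\leq 0$, with \emph{strict} inequality as soon as~\eqref{char} fails; this contradicts the maximality of $f$.

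Your approach is more variational: you combine the Hardy--Littlewood polarization inequality with the invariance of the Dirichlet energy under polarization and the convex minimization characterization of $K$ to obtain $\int_\Omega f^H Kf^H\geq D_p$, hence equality; strict convexity then pins down $K(f^H)=(Kf)^H$, so that $u^H$ is a classical solution of the \emph{same} equation as $u$, and the strong maximum principle applied to $u^H-u\geq 0$ on the connected half-domain yields~\eqref{char} directly. This is cleaner and modular, relying only on standard rearrangement facts, and as a by-product it produces the commutation identity $K(f^H)=(Kf)^H$ for extremizers. The paper's argument, by contrast, avoids invoking the invariance $\|\nabla v^H\|_2=\|\nabla v\|_2$ and works entirely at the level of PDE comparisons; this may be easier to transplant to settings (other operators, other boundary conditions) where the Dirichlet-energy invariance of polarization is less readily available.
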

\begin{proof}
Let $f$ and $u$ be as in the statement and fix a hyperplane $H=H(e)$ for some $|e|=1$.  By Proposition~\ref{prop:regularity} and Lemma~\ref{lemma:D}, $u\in C^{2,\alpha}(\overline{\Omega})$ for all $\alpha\in(0,1)$, $u$ solves~\eqref{eq:LENSNeumann} pointwisely, and $D_{p}^{-\frac{p}{p-1}}f=-\Delta u\in C^{0,\alpha}(\overline{\Omega})$; thus $f^H\in L^\infty(\Omega)$ and $\widetilde u:=D_{p}^{-\frac{p}{p-1}}K_{p} (f^H)\in W^{2,p}\cap C^{1,\alpha}$ for every $p\geq 1$ and $\alpha\in (0,1)$, by Lemma~\ref{l:reg} and Sobolev embeddings. 

Let $U:=u_e+u-\widetilde u - \widetilde u_e$, then using the definition of $f^H$ we have that 
\begin{align*}
-\Delta U = D_{p}^{-\frac{p}{p-1}}(f-f^H-(f^H)_e+f_e)=0\quad \text{ in }\Omega, \qquad \partial_\nu U=0\quad \text{ on } \partial \Omega. 
\end{align*}
Testing this equation with $U$ and integrating by parts we obtain that $U=k$ in $\Omega$ for some $k\in\R$. Then 
\begin{align}\label{reflections}
u_e+u=\widetilde u_e+\widetilde u+k\qquad \text{ in }\Omega.
\end{align}
Let 
\begin{align}\label{Gammas}
 \Gamma_1:=\{x\in\partial{\Omega(e)}\::\: x\cdot e=0\},\qquad \Gamma_2:=\{x\in\partial{\Omega(e)}\::\: x\cdot e>0\},
\end{align}
$w_1:=\widetilde u-u+k/2$, and $w_2:=\widetilde u-u_e+k/2$. Since $u=u^e$ and $\widetilde u=\widetilde u_e$ on $\Gamma_1$, we have that 
$w_1=w_2=0$ on $\Gamma_1$, by~\eqref{reflections}, and $\partial_\nu w_1=\partial_\nu w_2=0$ on $\Gamma_2$. Furthermore,
\begin{align*}
-\Delta w_1 =f^H - f\geq 0\quad \text{ in }\Omega(e)\qquad \text{ and }\qquad -\Delta w_2 =f^H - f_e\geq 0\quad \text{ in }\Omega(e),
\end{align*}
which implies by the maximum principle and Hopf's Lemma that $w_1 \geq 0$ and $w_2 \geq  0 $ in $\Omega(e)$.

By Lemma~\ref{lemma:D}, we know that $u=D_{p}^{-\frac{p}{p-1}}K_{p} f$.  Therefore, using that $\widetilde u_e=u_e+u-\widetilde u-k$ (by~\eqref{reflections}), $f^H_e=f_e+f- f^H$ (by definition of $f^H$), and that $\int_\Omega f = 0$, we obtain that
\begin{align}
D_{p}^{-\frac{p}{p-1}}\int_\Omega f K f- f^H K f^H\ dx&=\int_\Omega fu - f^H \widetilde u
 =\int_{\Omega(e)} fu + f_eu_e - f^H \widetilde u - (f^H)_e \widetilde u_e\, dx \nonumber\\
 &=\int_{\Omega(e)} fu + f_eu_e - f^H \widetilde u - (f_e+f- f^H)(u_e+u-\widetilde u-k)\, dx\nonumber\\
 &=\int_{\Omega(e)} (f_e-f^H)w_1 + (f-f^H)w_2 + \frac{k}{2}(f_e+f)\, dx \leq 0.\label{ineq}
\end{align}

To show that $u$ is foliated Schwarz symmetric with respect to some $e^*\in\Sn$, we use Lemma~\ref{l:char}. Assume, by contradiction, that~\eqref{char} does not hold. Then, without loss of generality, there are $e\in \Sn$ and the corresponding halfspace $H=H(e)$ such that
$u\neq u^H$ in $\Omega(e)$ and $u_e\neq u^H$ in $\Omega(e)$. Since $t\mapsto h(t):=|t|^s t$ is a strictly monotone increasing function in $\R$ for $s>-1$, this implies that
\begin{align*}
f = h(u)\neq h(u)^H = f^H  \qquad \text{ and }\qquad  f_e\neq f^H\quad\text{ in }\Omega(e).
\end{align*}
As a consequence, $w_1>0$ and $0\neq f-f^H\leq 0$ in $B(e)$ and $w_2>0$ in $B(e)$; but then,~\eqref{ineq} implies that 
\begin{align*}
\int_\Omega f K f < \int_\Omega f^H K f^H,
\end{align*}
Since $\|f^H\|_\frac{p+1}{p}=\|f\|_\frac{p+1}{p}$ (polarization is a rearrangement), this contradicts the maximality of $f$.  Therefore~\eqref{char} holds and the theorem follows from Lemma~\ref{l:char}.
\end{proof}

\subsection{Monotonicity of radial l.e.n.s. in annuli}
The main purpose of this subsection is to show existence and monotonicity of least-energy radial solutions of~\eqref{eq:LENSNeumann} in the critical and supercritical case $p\geq 2^*-1$ in annuli. We use the tools developed in \cite{ST17}, however this time applied to the variational framework characterization~\eqref{D}. 

For $p>1$, let $L_{rad}^{\frac{p+1}{p}}(\Omega)$ be the subspace of radially symmetric functions in $L^{\frac{p+1}{p}}(\Omega)$,
\begin{align}
\Omega&:=B_1(0)\backslash B_a(0)\quad \text{ for some }a\in(0,1),\label{Om}\\
 X_{p,rad}&:=\{f\in L_{rad}^\frac{p+1}{p}(\Omega): \int_\Omega f=0 \},\notag\\
 D_{p,rad}&:=\sup\left\{\int_\Omega fK f:\ f\in X_{p,rad},\ \|f\|_{\frac{p+1}{p}}=1\right\}.\notag
\end{align} 

If $f\in X_{p,rad}$ achieves $D_{p,rad}$ and $u:=D_{p}^{\frac{1}{p-1}}|f|^{\frac{1}{p}-1}f$, then we say that $u$ is a \emph{least-energy radial solution} of~\eqref{eq:LENSNeumann}, since, arguing as in the nonradial case, we have that $u$ achieves
\begin{align}
L_{p,rad}:=\inf \{I_{p}(w):\ w\in H^1_{rad}(\Omega)\setminus \{0\} \text{ is a weak solution of~\eqref{eq:LENSNeumann}}\}.
\end{align}
In the following we do a slight abuse of notation and use $w(|x|)=w(x)$ for a radial function $w$. We use $L^\infty_{rad}(\Omega)$ and $C_{rad}(\overline \Omega)$ to denote the subspace of radial functions in $L^\infty(\Omega)$ and $C(\overline \Omega)$, respectively.  Let $\Omega$ and $a$ be as in~\eqref{Om} and let
\begin{align*}
&{\cal I}:L_{rad}^\infty(\Omega)\to C_{rad}(\overline{\Omega}),\qquad {\cal I}h(x):=\int_{\{a{\leq}|y|{\leq}|x|\}} h(y)\ dy
=N\omega_N\int_a^{|x|}h(\rho)\rho^{N-1}\ d\rho\\
&\mF: C_{rad}(\overline{\Omega})\to L_{rad}^\infty(\Omega),\qquad \mF h:=(\chi_{\{{\cal I}h>0\}}-\chi_{\{{\cal I}h\leq 0\}})\, h.
\end{align*}
For $h\in C_{rad}(\overline{\Omega})$, the $\divideontimes$-transformation is given by 
\begin{align*}
h^{\divideontimes}\in L^\infty_{{rad}}(\Omega),\qquad h^{\divideontimes}(x) := ({\mathfrak F} h)^\#(\omega_N |x|^N-\omega_N \delta^N),
\end{align*}
where $\omega_N= |B_1|$ is the volume of the unitary ball in $\R^N$ and $\#$ is the decreasing rearrangement given by 
\begin{align*}
h^\#:[0,|\Omega|]\to\R,\qquad h^{\#}(0):=\text{ess sup}_U h\quad h^\#(s):=\inf\{t\in\R\::\: |\{h>t\}|<s\},\ s>0.
\end{align*}
For more details and comments regarding the definition of the \emph{flip-\&-rearrange transformation} $\divideontimes$ we refer to \cite[Section 3.2]{ST17}. We use the following result, which is a particular case of \cite[Theorem 1.3]{ST17} (for $f=g$, $p=q$) combined with \cite[Proposition 3.4]{ST17}.

\begin{theo}\label{thm:trans} Let $p>0$, $\Omega$ be as in~\eqref{Om}, and $f:\overline{\Omega}\to \R$ be a continuous and radially symmetric function with $\int_\Omega f = 0$. Then $f^\divideontimes\in X_{p,rad}$,
\begin{align}\label{in}
\|f^\divideontimes\|_\frac{p+1}{p}=\|f\|_\frac{p+1}{p}\quad \text{ and }\quad  \int_\Omega fK f\leq \int_\Omega f^\divideontimes K f^\divideontimes.
\end{align}
Furthermore, if $f$ is nontrivial and~\eqref{in} holds with equality, then $f$ is monotone in the radial variable. Moreover, $Kf$ is radially symmetric and strictly monotone in the radial variable.
\end{theo}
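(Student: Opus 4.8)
The plan is to deduce Theorem~\ref{thm:trans} from \cite[Theorem 1.3 and Proposition 3.4]{ST17} specialized to $f=g$, $p=q$, so strictly speaking one only needs to quote those statements; still, since the equality case is the part we really use, let me record the mechanism behind it.

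The first step is to rewrite $\int_\Omega fKf$ as a weighted one-dimensional integral. Setting $v:=Kf$, which is radial by uniqueness and rotational invariance, and integrating $-\Delta v=f$ over $B_r\setminus B_a$ with $\partial_\nu v=0$ on $\partial B_a$, one gets $N\omega_N r^{N-1}v'(r)=-\mathcal{I}f(r)$, hence
\[
\int_\Omega fKf=\int_\Omega|\nabla v|^2=N\omega_N\int_a^1 r^{N-1}v'(r)^2\,dr=\frac{1}{N\omega_N}\int_a^1\frac{\mathcal{I}f(r)^2}{r^{N-1}}\,dr .
\]
Under the measure-preserving change of variables $t=\omega_N(r^N-a^N)\in[0,|\Omega|]$ (so that $dt=N\omega_N r^{N-1}\,dr$), and with $\tilde f(t):=f(r)$, $F(t):=\int_0^t\tilde f$, this reads $\int_\Omega fKf=c\int_0^{|\Omega|}w(t)\,F(t)^2\,dt$ with $c=(N\omega_N)^{-2}$ and $w(t)=r(t)^{-2(N-1)}>0$; the key point is that the very same formula, with the same $c$ and $w$, holds for any radial $g\in X_{p,rad}$ in place of $f$.

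Next I would use the two structural properties of the transformation. The flip satisfies $\mathcal{I}(\mathfrak{F}f)(r)=|\mathcal{I}f(r)|$ --- both sides are Lipschitz, vanish at $r=a$, and have the same derivative a.e.\ by the usual rule for $\frac{d}{dr}|\cdot|$ --- so $\int_\Omega\mathfrak{F}f=|\mathcal{I}f(1)|=0$ and, in the $t$-variable, $\int_0^t\widetilde{\mathfrak{F}f}=|F(t)|\ge 0$, vanishing at $t=0$ and $t=|\Omega|$. The decreasing rearrangement then only improves matters: by Hardy--Littlewood, $G(t):=\int_0^t(\mathfrak{F}f)^\#\ge\int_0^t\widetilde{\mathfrak{F}f}=|F(t)|$, and $G$ is concave, nonnegative, with $G(0)=G(|\Omega|)=0$; in particular $\int_\Omega f^{\divideontimes}=G(|\Omega|)=0$, while equimeasurability together with $|\mathfrak{F}f|=|f|$ yields $\|f^{\divideontimes}\|_{\frac{p+1}{p}}=\|f\|_{\frac{p+1}{p}}$, so $f^{\divideontimes}\in X_{p,rad}$ with the same norm. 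Combining with the first step, and using $0\le|F|\le G$ pointwise together with $w>0$,
\[
\int_\Omega fKf=c\int_0^{|\Omega|}w(t)\,F(t)^2\,dt\le c\int_0^{|\Omega|}w(t)\,G(t)^2\,dt=\int_\Omega f^{\divideontimes}Kf^{\divideontimes},
\]
which is \eqref{in}; note that only positivity of $w$ is used here.

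For the equality statement, suppose $f\not\equiv 0$ and equality holds above; then $F^2=G^2$ a.e., hence $|F|\equiv G$ by continuity. Since $G'=(\mathfrak{F}f)^\#$ is nonincreasing, $|F|$ is concave, nonnegative, and vanishes at both endpoints of $[0,|\Omega|]$; a short concavity argument shows such a function cannot vanish at an interior point without being identically zero, and $|F|\equiv 0$ would force $\mathcal{I}f\equiv 0$ and hence $f\equiv 0$. Therefore $\mathcal{I}f$ has a fixed sign on $(a,1)$, so $\mathfrak{F}f=\varepsilon f$ for a single $\varepsilon\in\{\pm1\}$; moreover $|F|\equiv G$ says $\widetilde{\mathfrak{F}f}=(\mathfrak{F}f)^\#$, i.e.\ $\mathfrak{F}f$ is nonincreasing in $r$, whence $f=\varepsilon\mathfrak{F}f$ is monotone in $r$. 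Finally $Kf=v$ is radial, and since $f$ is monotone with $\int_\Omega f=0$ the function $\mathcal{I}f$ is unimodal and does not vanish on $(a,1)$, so $v'(r)=-\mathcal{I}f(r)/(N\omega_N r^{N-1})$ never vanishes there and $Kf$ is strictly monotone. The delicate points --- the measure-preserving identification of $\Omega$ with $[0,|\Omega|]$, the Hardy--Littlewood step for the \emph{signed} function $\mathfrak{F}f$, and the a.e.-derivative identity for $|\mathcal{I}f|$ --- are exactly what \cite[Section 3]{ST17} provides, which is why I would invoke it rather than reprove it.
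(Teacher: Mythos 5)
Your proposal is correct and takes the same route as the paper, which states this theorem as a direct citation of \cite{ST17} (Theorem~1.3 with $f=g$, $p=q$, combined with Proposition~3.4) and gives no proof of its own. Your reconstruction of the underlying mechanism --- the reduction of $\int_\Omega fKf$ to a weighted integral of $(\cI f)^2$, the identity $\cI(\mF f)=|\cI f|$, the Hardy--Littlewood comparison, and the concavity argument in the equality case --- accurately reflects the argument carried out in that reference, so there is no divergence from the paper's approach.
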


We are ready to show our monotonicity result. 

\begin{theo} \label{thm:mono}
Let $\Omega$ be as in~\eqref{Om} and let $p>1$. The set of least-energy radial solutions of~\eqref{eq:LENSNeumann} is nonempty. Moreover, if $u$ is a least-energy radial solution, then $u\in C^{2,\alpha}(\overline{\Omega})$ is a classical radial solution of~\eqref{eq:LENSNeumann} and $u$ is strictly monotone in the radial variable.
\end{theo}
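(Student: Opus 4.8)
The plan is to run the dual-variational scheme of Section~\ref{sec:3} inside the radial class, exploiting that on an annulus $\Omega=B_1(0)\setminus B_a(0)$ (with $a>0$) the radial Sobolev embeddings are compact for \emph{every} exponent, so the critical/supercritical obstruction present in the general case simply disappears.

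First I would establish the a priori bounds and existence of a maximizer of $D_{p,rad}$. Since $r=|x|\in[a,1]$ is bounded away from the origin, a radial function is, up to the smooth weight $r^{N-1}$, essentially one-dimensional, so $W^{2,\frac{p+1}{p}}_{rad}(\Omega)\hookrightarrow C^1(\overline\Omega)$ with \emph{compact} embedding, and hence $K$ maps $L^{\frac{p+1}{p}}_{rad}(\Omega)$ compactly into $C(\overline\Omega)\hookrightarrow L^{p+1}(\Omega)$, with no restriction on $p$. Combining this with Hölder's inequality gives $D_{p,rad}\in(0,\infty)$ (positivity by testing with a radial eigenfunction, exactly as in Lemma~\ref{lemma:bounds}) and shows that $f\mapsto\int_\Omega fKf$ is weakly continuous along radial sequences. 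A maximizing sequence $f_k\rightharpoonup f$ in $L^{\frac{p+1}{p}}_{rad}(\Omega)$ then has $\int_\Omega fKf=D_{p,rad}>0$, so $f\neq0$; the normalization argument in the proof of Lemma~\ref{lemma:CK91} forces $\|f\|_{\frac{p+1}{p}}=1$, and uniform convexity of $L^{\frac{p+1}{p}}(\Omega)$ upgrades weak to strong convergence. (This replaces the delicate concentration-compactness of Lemma~\ref{lemma:CK91} by a soft argument valid for all $p>1$.)

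Next I would derive the equation, the regularity, and the least-energy identification. By the principle of symmetric criticality --- or by redoing the Lagrange-multiplier computation of Lemma~\ref{lemma:D}(ii) with radial test functions and using the $O(N)$-invariance of $\phi_p$ and of the constraint --- one obtains $K_p f=D_{p,rad}\,|f|^{\frac1p-1}f$, so the rescaling $u$ of $|f|^{\frac1p-1}f$ (as in Theorem~\ref{th:sym}) is a radial weak solution of~\eqref{eq:LENSNeumann}. For regularity I would bootstrap with the radial embedding rather than Proposition~\ref{prop:regularity} (which only covers $p\le 2^*-1$): up to an additive constant $u$ is a positive multiple of $Kf\in W^{2,\frac{p+1}{p}}_{rad}(\Omega)\hookrightarrow C^1(\overline\Omega)$, hence bounded; then $f$ is a constant multiple of $|u|^{p-1}u\in C^{0,\alpha}(\overline\Omega)$ for every $\alpha\in(0,1)$ (as $t\mapsto|t|^{p-1}t$ is locally Lipschitz for $p>1$), and Schauder estimates give $u\in C^{2,\alpha}(\overline\Omega)$ for every $\alpha\in(0,1)$. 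Finally, repeating the proofs of Proposition~\ref{prop:rel_Lp_Dp} and Lemmas~\ref{lemma:DpLambdap}--\ref{lemma:D_p=1/Lambda} verbatim but with $H^1(\Omega)$ replaced by $H^1_{rad}(\Omega)$ yields $\Lambda_{p,rad}=1/D_{p,rad}$ and $L_{p,rad}=\frac{p-1}{2(p+1)}\Lambda_{p,rad}^{\frac{p+1}{p-1}}=I_p(u)$, so $u$ is a least-energy radial solution and the set of such solutions is nonempty.

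For the monotonicity I would invoke Theorem~\ref{thm:trans}: having shown $u\in C^{2,\alpha}(\overline\Omega)$, the function $f$ is continuous, radial, with $\int_\Omega f=0$, so $f^\divideontimes\in X_{p,rad}$ with $\|f^\divideontimes\|_{\frac{p+1}{p}}=\|f\|_{\frac{p+1}{p}}=1$ and $\int_\Omega fKf\le\int_\Omega f^\divideontimes Kf^\divideontimes$. Since $f$ maximizes $D_{p,rad}$ over the unit sphere of $X_{p,rad}$, also $\int_\Omega f^\divideontimes Kf^\divideontimes\le D_{p,rad}=\int_\Omega fKf$, so equality holds in~\eqref{in}; by the equality statement of Theorem~\ref{thm:trans}, $f$ is monotone in the radial variable and $Kf$ is \emph{strictly} monotone in the radial variable. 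From $K_p f=D_{p,rad}\,|f|^{\frac1p-1}f$ one has $|f|^{\frac1p-1}f=D_{p,rad}^{-1}\bigl(Kf+c_p(Kf)\bigr)$, hence $u$ is a positive multiple of $Kf$ plus a constant, and is therefore strictly monotone in $r$ as well. The genuinely new input, compared to \cite{ST17,CK91}, is confined to the compactness and supercritical regularity above, which are soft precisely because radial functions on an annulus are effectively one-dimensional; the monotonicity step is then essentially automatic from Theorem~\ref{thm:trans}. I expect the most tedious (though conceptually routine) part of writing this up to be the symmetric-criticality verification and the transcription of the $\Lambda_p$--$L_p$--$D_p$ identities to the radial setting.
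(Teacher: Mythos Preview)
Your proposal is correct and follows essentially the same route as the paper: both exploit the compact embedding $W^{2,t}_{rad}(\Omega)\hookrightarrow C(\overline\Omega)$ on the annulus to obtain a maximizer of $D_{p,rad}$ for all $p>1$ without concentration-compactness, then apply Theorem~\ref{thm:trans} to the maximizer $f$ to deduce monotonicity. Your write-up is somewhat more explicit on the least-energy identification (via $\Lambda_{p,rad}=1/D_{p,rad}$ and the $L_{p,rad}$ formula) and on the strict-monotonicity step (arguing through the strict monotonicity of $Kf$ rather than of $f$), but these are details the paper waves past with ``arguing as in the nonradial case''.
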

\begin{proof}
The existence of least-energy radial solutions for $p>1$ follows similarly as in Lemma~\ref{lemma:D}. Indeed, by letting $\psi_{1}$ be the first non-constant \emph{radial} eigenfunction, we have that $\psi_{1}\in X_{p,rad}$ and $D_{p,rad}>0$.  Recall now that, for $t>1$, the embedding 
\begin{align}\label{ce}
W_{rad}^{2,t}(\Omega)\subset W_{rad}^{1,t}(\Omega) \hookrightarrow C^0(\overline \Omega)
\end{align}
is compact  (this is well known and can be proved using radial variables and reducing the problem to the 1-dimensional case $W^{1,t}(a,1)$). Therefore, the operator $K$ is compact from $L_{rad}^\frac{p+1}{p}(\Omega)$ to $C^0(\overline \Omega)$, and in particular to $L_{rad}^{p+1}(\Omega)$. Reasoning now as in Lemma~\ref{lemma:D}-(ii), if follows that $D_{p,rad}$ is achieved at some $f\in X_{p,rad}$ such that $\| f\|_{\frac{{p}+1}{{p}}}=1$, and $u=D_{p,rad}^{-\frac{p}{p-1}}K_{p} f$ is a radial least-energy solution of~\eqref{eq:LENSNeumann}.

Let $f\in X_{p,rad}$ be a maximizer for $D_{p,rad}$. Arguing as in Proposition~\ref{prop:regularity} (using \eqref{ce}) we have that $f$ is continuous up to the boundary $\partial \Omega$. By Theorem~\ref{thm:trans} and the maximality of $f$ we have that $f$ is strictly monotone in the radial variable and therefore $u=D_{p,rad}^{-\frac{p}{p-1}}K_{p} f=D_{p,rad}^{\frac{1}{p-1}}|f|^{\frac{1}{p}-1}f$ is also strictly monotone in the radial variable.
\end{proof}

\subsection{Symmetry breaking in annuli}

We use the following two lemmas. 

\begin{lemma}\label{lemma1}
Let $f\in X_{p,rad}$ be a maximizer for~\eqref{D} with $p\in\cI_{rad}$, $p>1$, and $u:=D_{p}^{-\frac{p}{p-1}}K_{p} f$.  Then $u\in C^{3,\alpha}(\overline{\Omega})$. Moreover, if $u$ is increasing in the radial variable and $\bar f:= p |u|^{p-1} u_{x_1},$ then 
\begin{align}\label{ap:pos:1}
K_{p} \bar f\geq 0\quad \text{ in }\Omega(e_1)
\end{align}
 and
\begin{align}\label{st:pos}
\infty> \int_{\Omega(e_1)} p|u|^{p-1}u_{x_1}(u_{x_1}-K_{p}\bar f )\, dx\geq 0.
\end{align}
\end{lemma}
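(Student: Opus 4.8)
The plan is to exploit the fact that the function $\bar f = p|u|^{p-1}u_{x_1}$ is the derivative of the nonlinearity applied to $u$ in the $x_1$-direction, and that differentiating the equation $-\Delta u = D_p^{-\frac{p}{p-1}}|f|^{1/p-1}f$ — equivalently $-\Delta(|u|^{p-1}u) = D_p^{-\frac{p}{p-1}} u$ after applying $|{\cdot}|^{p-1}({\cdot})$ to the dual identity, or more directly $-\Delta u = D_p^{-\frac{p}{p-1}}|u|^{p-1}u$ with the correct constant — with respect to $x_1$ shows that $u_{x_1}$ solves a linearized problem. First I would record the improved regularity: since $u\in C^{2,\alpha}(\overline\Omega)$ solves $-\Delta u = \lambda|u|^{p-1}u$ with $p>1$, the right-hand side is $C^{1,\alpha}$ (as $t\mapsto|t|^{p-1}t$ is $C^1$ for $p>1$), hence by Schauder $u\in C^{3,\alpha}(\overline\Omega)$; this is exactly what lets us differentiate the equation classically. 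Note that $u$ being radial and increasing in $r$ means $u$ is radial, so $u_{x_1}(x) = u'(|x|)\frac{x_1}{|x|}$, which is positive in $\Omega(e_1)=\{x_1>0\}$, vanishes on $\{x_1=0\}$, and is antisymmetric under $x_1\mapsto -x_1$.

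Next, differentiating $-\Delta u = \lambda |u|^{p-1}u$ (with $\lambda = D_p^{-\frac{p}{p-1}}\cdot(\text{const})$, the precise constant being irrelevant to the sign statements) in $x_1$ gives $-\Delta(u_{x_1}) = \lambda p|u|^{p-1}u_{x_1} = \lambda \bar f$ in $\Omega$, i.e. $-\Delta(\lambda^{-1}u_{x_1}) = \bar f$. To identify $K_p\bar f$ I would check the two defining properties of $K_p$: that $-\Delta(K_p\bar f) = \bar f$ with zero Neumann data, and the nonlinear normalization $\int_\Omega |K_p\bar f|^{p-1}K_p\bar f = 0$. The Neumann boundary condition for $u_{x_1}$ on the sphere needs care — $\partial_\nu u = 0$ on $\partial\Omega$ together with radial symmetry gives $u'(1)=u'(a)=0$, and from $u_{x_1}=u'(|x|)x_1/|x|$ one computes $\partial_\nu u_{x_1}$ on $\{|x|=1\}$ and $\{|x|=a\}$; since $u'$ vanishes there, the only surviving term involves $u''$, and in fact $\partial_\nu u_{x_1} = u''(|x|)\frac{x_1}{|x|}\cdot(\pm 1)$ restricted to the boundary, which need not vanish, so $u_{x_1}$ itself is \emph{not} the Neumann-inverse of $\bar f$. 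The correct statement is that $K\bar f$ and $u_{x_1}$ differ by a harmonic function with matching (nonzero) Neumann data; but since $u_{x_1}$ is antisymmetric in $x_1$ and $\int_\Omega \bar f = \int_\Omega -\Delta(\lambda^{-1}u_{x_1}) = 0$ by the divergence theorem (using the boundary behaviour), one gets $\int_\Omega \bar f = 0$, so $\bar f\in X_p$ and $K\bar f$ is well-defined; then $K_p\bar f = K\bar f + c_p(K\bar f)$ for the unique constant making the nonlinear average vanish. The key point for \eqref{ap:pos:1} is to show this $K_p\bar f \geq 0$ on $\Omega(e_1)$: I would argue that $K\bar f$ (the zero-average solution) can be taken antisymmetric in $x_1$ — since $\bar f$ is antisymmetric and the equation/boundary conditions respect the reflection $x_1\mapsto -x_1$, uniqueness forces $K\bar f$ to be antisymmetric — and then apply the maximum principle / Hopf on $\Omega(e_1)$ with the mixed boundary conditions ($K\bar f = 0$ on $\{x_1=0\}$ from antisymmetry, $\partial_\nu K\bar f$ whatever it is on the curved part) together with $-\Delta K\bar f = \bar f \geq 0$ in $\Omega(e_1)$ (since $u_{x_1}>0$ there and $p>1$) to conclude $K\bar f \geq 0$ in $\Omega(e_1)$; because $K\bar f$ is antisymmetric its nonlinear average over $\Omega$ is automatically zero, so $c_p(K\bar f)=0$ and $K_p\bar f = K\bar f \geq 0$ in $\Omega(e_1)$.

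Finally, for \eqref{st:pos}, I would test the identity $-\Delta(\lambda^{-1}u_{x_1}) = \bar f = -\Delta(K_p\bar f)$ — equivalently $-\Delta(\lambda^{-1}u_{x_1} - K_p\bar f) = 0$ with appropriate boundary data — but cleaner is to integrate directly: the integrand $p|u|^{p-1}u_{x_1}(u_{x_1} - K_p\bar f) = \bar f\,(u_{x_1} - \lambda K_p\bar f)$ up to the constant $\lambda$; one writes $\int_{\Omega(e_1)}\bar f\,u_{x_1} = \int_{\Omega(e_1)} -\Delta(\lambda^{-1}u_{x_1})\,u_{x_1}$ and integrates by parts to get $\lambda^{-1}\int_{\Omega(e_1)}|\nabla u_{x_1}|^2$ plus boundary terms, and similarly $\int_{\Omega(e_1)}\bar f\, K_p\bar f = \int_{\Omega(e_1)}|\nabla K\bar f|^2$ plus boundary terms; the difference, after the boundary terms are controlled (they cancel or are nonnegative thanks to the antisymmetry making everything vanish on $\{x_1=0\}$ and the matching Neumann data on the curved boundary), is nonnegative — in fact it equals $\int_{\Omega(e_1)}|\nabla(\lambda^{-1}u_{x_1} - K\bar f)|^2 \geq 0$ modulo constants, which also gives finiteness (everything is $C^{3,\alpha}(\overline\Omega)$, hence all integrals converge). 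I expect the main obstacle to be the careful bookkeeping of the boundary terms on the curved part $\Gamma_2 = \{x\in\partial\Omega(e_1): x_1>0\}$: unlike the homogeneous-Neumann situation in Theorem \ref{th:sym}, here $\partial_\nu u_{x_1}$ does \emph{not} vanish on $\partial\Omega$, so one must verify that $u_{x_1}$ and $K\bar f$ have the \emph{same} normal derivative on the full sphere $\partial\Omega$ (this follows because their difference is harmonic with zero average, hence constant — wait, it need not be harmonic globally; rather $\lambda^{-1}u_{x_1}-K\bar f$ is harmonic in $\Omega$ and one shows via its Neumann data and antisymmetry that it is a constant, which then drops out of all the gradient integrals) so that the boundary contributions in the two integration-by-parts computations are identical and cancel in the difference. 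Establishing that $\lambda^{-1}u_{x_1} - K\bar f$ is constant — by checking it is harmonic and computing that its normal derivative vanishes, or by the uniqueness in Lemma \ref{l:reg} applied to $\bar f \in X_p$ after subtracting the correct constant — is the technical heart of the argument.
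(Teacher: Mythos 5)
The regularity step and the argument for \eqref{ap:pos:1} are essentially sound (and indeed your observations that $\int_\Omega\bar f=0$, that $K\bar f$ is antisymmetric by uniqueness, and that $c_p(K\bar f)=0$ so $K_p\bar f=K\bar f$, are all correct). The paper itself handles the regularity the same way and then cites \cite[Lemma 4.5]{ST17} for \eqref{ap:pos:1}--\eqref{st:pos}, so you are attempting a self-contained replacement of that lemma — which is fine in principle, but your argument for \eqref{st:pos} has a genuine gap, and it is a sign error, not merely ``bookkeeping of boundary terms.''

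Your integration-by-parts computation actually yields the \emph{opposite} inequality. Since $u$ is radial and $\partial_\nu u=0$ on $\partial\Omega$, we have $u_{x_1}=u'(r)x_1/r=0$ on $\partial\Omega$, so $\int_\Omega\bar f\,u_{x_1}=\int_\Omega(-\Delta u_{x_1})u_{x_1}=\int_\Omega|\nabla u_{x_1}|^2$ with no boundary term. Also $\int_\Omega\bar f\,K\bar f=\int_\Omega|\nabla K\bar f|^2$ and, pairing the two via $-\Delta K\bar f=\bar f$ and $\partial_\nu K\bar f=0$, one gets $\int_\Omega\nabla u_{x_1}\cdot\nabla K\bar f=\int_\Omega\bar f\,u_{x_1}=\int_\Omega|\nabla u_{x_1}|^2$. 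Expanding the square then gives $\int_\Omega|\nabla(u_{x_1}-K\bar f)|^2=\int_\Omega|\nabla K\bar f|^2-\int_\Omega|\nabla u_{x_1}|^2=-\int_\Omega\bar f(u_{x_1}-K\bar f)$. So the quantity you claim is a perfect square is in fact the \emph{negative} of one: $\int_\Omega\bar f(u_{x_1}-K_p\bar f)=-\int_\Omega|\nabla(u_{x_1}-K\bar f)|^2\leq 0$. (Equivalently, $u_{x_1}$ is the Dirichlet inverse of $\bar f$, and since $\int_\Omega u_{x_1}=0$ it is an admissible competitor for the zero-average Dirichlet energy minimization whose minimizer is $K\bar f$, which forces $\int|\nabla u_{x_1}|^2\leq\int|\nabla K\bar f|^2$.) Your sketch is also internally inconsistent on this point: you correctly note that $\partial_\nu u_{x_1}=\pm u''(r)x_1/r\neq 0$ on $\partial\Omega$ (because $u(a),u(1)\neq 0$ by Neumann b.c. and ODE uniqueness), yet the final paragraph tries to argue $u_{x_1}-K\bar f$ is constant ``by computing that its normal derivative vanishes.'' It does not vanish, and the difference is not constant.

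The actual source of the lower bound \eqref{st:pos} is the \emph{maximality of $f$} for $D_p$, which your proof never invokes. The perturbation $h=\bar f$ (equivalently $f_{x_1}$, up to a constant) lies in $X_p$, and the second-order optimality condition for the constrained maximum of $g\mapsto\int gKg/\|g\|^2_{(p+1)/p}$ at $g=f$ reads $\int hKh\leq\frac{D_p}{p}\int|f|^{1/p-1}h^2$. Using $|f|^{1/p-1}=D_p^{-1}|u|^{1-p}$ and $\bar f=p|u|^{p-1}u_{x_1}$, the right-hand side becomes exactly $\int\bar f\,u_{x_1}$, and the left-hand side is $\int\bar f\,K_p\bar f$; this is precisely $\int_\Omega\bar f(u_{x_1}-K_p\bar f)\geq 0$, which halves to $\Omega(e_1)$ by symmetry of the integrand. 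Combined with the opposite (strict, via Lemma~\ref{lemma2}) sign in Theorem~\ref{nr:tm:intro}, this is what produces the contradiction. Without this second-variation input the inequality \eqref{st:pos} is false in the direction you need, so the gap is essential, not technical.

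Finiteness in \eqref{st:pos} is fine either way since everything is $C^{2,\alpha}(\overline\Omega)$, as you note.
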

\begin{proof}
Let $p\geq 2^*-1$. By Proposition~\ref{prop:regularity} we know that $u\in C^{2,\alpha}(\overline{\Omega})$ for every $\alpha\in(0,1)$, but then $-\Delta u = |u|^{p-1}u\in C^{1,\alpha}(\overline{\Omega})$ (because $p>1$) and, by elliptic regularity, $u\in C^{3,\alpha}(\overline{\Omega})$. Moreover, by Lemma~\ref{lemma:D} we know that $u$ achieves~\eqref{c:eps}. The claim now follows from \cite[Lemma 4.5]{ST17} (using $p=q$, observe that \cite[Lemma 4.5]{ST17} is stated in the subcritical regime, but the same proof holds for the critical and supercritical cases if the assumptions hold).
\end{proof}

\begin{lemma}[Lemma 4.6 from \cite{ST17}]\label{lemma2}
 Let $t>1$ and $h\in L^t(\Omega)\backslash\{0\}$ be an antisymmetric function in $\Omega$ with respect to $x_1$ and let $w^N:=K h\in W^{2,t}(\Omega)$, that is, $w^N$ is the unique strong solution of 
 \begin{align*}
 -\Delta w^N=h\quad \text{ in }\Omega,\qquad\partial_\nu w^N=0\quad \text{ on }\partial \Omega,\qquad \text{ and }\qquad \int_\Omega w^N =0.
 \end{align*}
Moreover, let $w^D\in W^{2,t}(\Omega)\cap C^1(\overline{\Omega})$ be a strong solution of $-\Delta w^D=h$ in $\Omega$ with $w^D=0$ on $\partial \Omega$.  If
$h\geq0$ and $w^N\geq 0$ in $\Omega(e_1)$ then $w^D<w^N$ in $\Omega(e_1)$.
\end{lemma}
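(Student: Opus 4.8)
The plan is to study the harmonic difference $v:=w^N-w^D$ and to prove $v>0$ in $\Omega(e_1)$ by the maximum principle, the only subtle point being the exclusion of the degenerate case $v\equiv 0$.

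First I would exploit the symmetry of the data. Since $\Omega$ is invariant under the reflection $\sigma(x)=x-2(x\cdot e_1)e_1$ and $h\circ\sigma=-h$ (so, in particular, $\int_\Omega h=0$, which makes $w^N=Kh$ well defined by Lemma~\ref{l:reg}), the uniqueness statements of Lemma~\ref{l:reg} and of the Dirichlet problem force $w^N\circ\sigma=-w^N$ and $w^D\circ\sigma=-w^D$: indeed, $-(w^N\circ\sigma)$ solves the same zero-average Neumann problem as $w^N$, and likewise for $w^D$. Hence $w^N=w^D=0$ on the fixed hyperplane $\{x_1=0\}$, and in particular on $\Gamma_1:=\{x\in\partial\Omega(e_1):x_1=0\}$. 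Now $-\Delta v=h-h=0$ in $\Omega$, and on $\partial\Omega(e_1)=\Gamma_1\cup\Gamma_2$, with $\Gamma_2:=\{x\in\partial\Omega(e_1):x_1>0\}\subset\partial\Omega$, we have $v=0$ on $\Gamma_1$ and $v=w^N-0=w^N\geq 0$ on $\Gamma_2$ by hypothesis (here one uses that $w^N$ is continuous up to $\partial\Omega$, which is clear in the applications, where $h$ is smooth). The weak maximum principle then gives $v\geq 0$ in $\Omega(e_1)$, and the strong maximum principle yields the dichotomy: either $v>0$ throughout $\Omega(e_1)$, which is the assertion, or $v\equiv 0$ in $\Omega(e_1)$.

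It remains to rule out $v\equiv 0$. Assume it; then $w:=w^N=w^D$ on $\overline{\Omega(e_1)}$, so $w\in C^1(\overline{\Omega(e_1)})$, it solves $-\Delta w=h$ in $\Omega(e_1)$, and $w=0$ on $\partial\Omega(e_1)$ (on $\Gamma_2$ by the Dirichlet condition, on $\Gamma_1$ by the antisymmetry above). Since $h\geq 0$ in $\Omega(e_1)$ and $h\not\equiv 0$ there (otherwise antisymmetry would give $h\equiv 0$ in $\Omega$, contradicting $h\neq 0$), the strong maximum principle yields $w>0$ in $\Omega(e_1)$. Choosing $x_0\in\Gamma_2$ at which $\Omega(e_1)$ satisfies an interior ball condition, e.g.\ a point of $\partial\Omega$ on the positive $x_1$-semiaxis, Hopf's lemma applied to the superharmonic function $w$ gives $\partial_\nu w(x_0)<0$, where $\nu$ is the outer unit normal of $\Omega(e_1)$, equivalently of $\Omega$, at $x_0$. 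On the other hand $\partial_\nu w^N=0$ on $\partial\Omega$, and near $\Gamma_2$ this holds classically because there $w^N=w^D\in C^1$; since $w=w^N$ near $x_0$, this forces $\partial_\nu w(x_0)=0$, a contradiction. Therefore $v\equiv 0$ is impossible and $w^D<w^N$ in $\Omega(e_1)$.

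I expect the main obstacle to be precisely this last step: one must combine the antisymmetry (needed to pin $v$ on the cut $\Gamma_1$), the fact that sign information is available only on the half-domain $\Omega(e_1)$, and Hopf's lemma played off against the homogeneous Neumann condition of $w^N$. The accompanying regularity bookkeeping near $\Gamma_2$, so that the relevant normal derivatives are classical, is routine.
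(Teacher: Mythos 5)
Your proof is correct, and it follows what is essentially the only natural route, which is also the one taken in the source \cite{ST17} that the paper cites (the paper itself does not reproduce the argument). The three ingredients are exactly right: antisymmetry of both $w^N$ and $w^D$, which pins $v = w^N - w^D$ to zero on the cut $\Gamma_1$ (this is also why the lemma works only for $\Omega$ invariant under the reflection $\sigma$); the weak and strong maximum principles on the connected half-domain $\Omega(e_1)$, giving $v>0$ or $v\equiv 0$; and the exclusion of the degenerate case by playing Hopf's lemma for the Dirichlet-zero supersolution against the Neumann condition $\partial_\nu w^N = 0$ at a point of $\Gamma_2$. The regularity bookkeeping you flag is genuinely needed but unproblematic: in the degenerate case one has $w = w^D \in C^1(\overline{\Omega})$ near $\Gamma_2$ by the standing hypothesis on $w^D$, which is precisely what makes both the pointwise Neumann condition and the Hopf boundary-point lemma available; on $\Gamma_1$ the vanishing of $v$ can be read in the trace sense if one is worried about low integrability of $h$, and the weak maximum principle still applies. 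No gaps.
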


\begin{theo}\label{nr:tm:intro}
 Let $N\geq 4$, $\Omega$ as in~\eqref{Om} and $f$ be a maximizer for~\eqref{D} for $p\in \cI_{rad}$, $p>1$, then $f$ is not radially symmetric.  In particular, least-energy solutions of~\eqref{eq:LENSNeumann} in annuli are not radially symmetric.
 \end{theo}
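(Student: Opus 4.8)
The plan is to argue by contradiction. Suppose $f$ is radially symmetric. Since $X_{p,rad}\subset X_p$ forces $D_{p,rad}\le D_p$ and $f\in X_{p,rad}$ realizes $D_p$, one gets $D_{p,rad}=D_p$ and $f$ is also a maximizer of $D_{p,rad}$; hence (here $p>1$, so Theorem~\ref{thm:mono} applies) the associated function $u:=D_p^{-\frac{p}{p-1}}K_{p}f$ is a classical radial solution of~\eqref{eq:LENSNeumann} which is \emph{strictly} monotone in the radial variable. Replacing $f$ by $-f$ if necessary — still a radial maximizer of $D_p$, and $K_{p}(-f)=-K_{p}f$, so $u$ is replaced by $-u$ — I may assume $u$ is increasing in $|x|$. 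Following Lemma~\ref{lemma1}, I then set $\bar f:=p|u|^{p-1}u_{x_1}$. Since $u$ is radial, $|u|^{p-1}$ is even and $u_{x_1}$ is odd in $x_1$, so $\bar f$ is antisymmetric with respect to $\{x_1=0\}$; since $u$ is increasing and nonconstant in $|x|$, $\bar f\ge 0$ in $\Omega(e_1)$ and $\bar f\not\equiv 0$; and since $u\in C^{3,\alpha}(\overline{\Omega})$ by Lemma~\ref{lemma1}, $\bar f$ is continuous, so $\{\bar f>0\}$ has positive measure in $\Omega(e_1)$.

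The heart of the argument is to apply Lemmas~\ref{lemma1} and~\ref{lemma2} to $\bar f$ simultaneously, which needs two identifications. First, antisymmetry of $\bar f$ together with the symmetry of $\Omega$ forces $K\bar f$ to be antisymmetric, hence $\int_\Omega|K\bar f|^{p-1}K\bar f=0$ and so $K_{p}\bar f=K\bar f$ (the nonlinear translation $c_p$ vanishes). Second, differentiating $-\Delta u=|u|^{p-1}u$ gives $-\Delta u_{x_1}=\bar f$; and — the point where the Neumann structure enters — $u$ radial with $\partial_\nu u=0$ on $\partial\Omega$ forces $u'(a)=u'(1)=0$ on the two bounding spheres of $\Omega=B_1(0)\backslash B_a(0)$, so $u_{x_1}=u'(|x|)\,x_1/|x|$ vanishes on \emph{all} of $\partial\Omega$. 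Therefore $u_{x_1}$ is exactly the classical Dirichlet solution $w^D$ of $-\Delta w^D=\bar f$ in $\Omega$, $w^D=0$ on $\partial\Omega$, while $w^N:=K_{p}\bar f=K\bar f$ is the zero-average Neumann solution. Now Lemma~\ref{lemma1} gives $K_{p}\bar f\ge 0$ in $\Omega(e_1)$ together with
\[
0\leq \int_{\Omega(e_1)}\bar f\,(u_{x_1}-K_{p}\bar f)\,dx<\infty,
\]
and Lemma~\ref{lemma2}, applicable with $h=\bar f$ since $\bar f\ge 0$ and $w^N=K_{p}\bar f\ge 0$ in $\Omega(e_1)$, yields the strict inequality $u_{x_1}=w^D<w^N=K_{p}\bar f$ everywhere in $\Omega(e_1)$.

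Combining these, the integrand $\bar f\,(u_{x_1}-K_{p}\bar f)$ is $\le 0$ throughout $\Omega(e_1)$ and $<0$ on $\{\bar f>0\}$, a set of positive measure; hence the displayed integral is strictly negative, contradicting its non-negativity. So $f$ cannot be radially symmetric. For the last assertion, if $p\in(1,2^*-1]$ and $u_p$ is a least-energy solution of~\eqref{eq:LENSNeumann}, then by Proposition~\ref{prop:rel_Lp_Dp} and Lemmas~\ref{lemma:DpLambdap}--\ref{lemma:D_p=1/Lambda} one has $u_p=\Lambda_p^{\frac{1}{p-1}}v_p$ with $v_p$ a minimizer of $\Lambda_p$, and $f_p:=|v_p|^{p-1}v_p$ a maximizer of $D_p$; were $u_p$ radial, $f_p$ would be a radial maximizer of $D_p$, contradicting what we just proved. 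I expect the main obstacle to be precisely the two identifications of the previous paragraph — most delicate is checking that the radial Neumann boundary condition makes $u_{x_1}$ vanish on the \emph{entire} boundary (so that $u_{x_1}$ is the Dirichlet solution required by Lemma~\ref{lemma2}) and that antisymmetry kills the constant $c_p$ so that $K_{p}\bar f=K\bar f$; once these are in place, everything follows from the quoted lemmas.
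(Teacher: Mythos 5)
Your proof is correct and follows essentially the same route as the paper: assume $f$ radial, use Theorem~\ref{thm:mono} to get strict monotonicity of $u$, observe that $u_{x_1}$ solves the linearized Dirichlet problem with right-hand side $\bar f=p|u|^{p-1}u_{x_1}\ge 0$ in $\Omega(e_1)$, and then combine the sign information of Lemma~\ref{lemma1} with the strict comparison of Lemma~\ref{lemma2} to reach the contradiction $0\le\int_{\Omega(e_1)}\bar f(u_{x_1}-K_p\bar f)<0$. You actually spell out two small points that the paper leaves implicit but which are genuinely needed: that antisymmetry of $\bar f$ forces $c_p(K\bar f)=0$ so that $K_p\bar f$ coincides with the $w^N=K\bar f$ appearing in Lemma~\ref{lemma2}, and that strict monotonicity makes $\{\bar f>0\}$ of positive measure so the final integral is strictly negative; these are worthwhile clarifications, not a different method.
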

\begin{proof}
Let $\Omega\subset \R^N$ be as in~\eqref{Om}, $f$ as in the statement, $u:= D_{p}^{-\frac{p}{p-1}}K_{p} f$, and $\bar f:= p |u|^{p-1} u_{x_1}$.  We argue by contradiction. Assume that $f$ is radial. Then $u$ is also radially symmetric and, by Theorem~\ref{thm:mono}, $u$ is strictly monotone in the radial variable. Since $u\in C^{3,\alpha}(\overline{\Omega})$, by Lemma~\ref{lemma1}, we may interchange derivatives and $(-\Delta u)_{x_1}=-\Delta (u_{x_1})$ in $\Omega$. Thus $u_{x_1}\in C^{2,\alpha}(\overline{\Omega})$ is the unique solution of the Dirichlet problem
\begin{align*}
-\Delta u_{x_1} &= p |u|^{p-1} u_{x_1} \quad  \text{ in }\Omega\qquad\text{with}\quad u_{x_1}=0\quad \text{ on }\partial\Omega,
\end{align*}
where the boundary conditions follow from the fact that $u$ is radially symmetric and $\partial_\nu u=0$ on $\partial\Omega$. We may assume without loss of generality that $u$ is strictly \emph{increasing} in the radial variable (the other case follows similarly). Then $u_{x_1}$ is nonnegative in $\Omega(e_1)$ and, by Lemmas~\ref{lemma1} and~\ref{lemma2},
\begin{align*}
0&\leq \int_{\Omega(e_1)} p|u|^{p-1}u_{x_1}(u_{x_1}-K_{p}\bar f )\, dx <0,
 \end{align*}
a contradiction. Therefore $f$ cannot be radially symmetric and this concludes the proof.
\end{proof}

\subsection{Proof of the main results in the radial setting}\label{proofsradial:sec}

\begin{proof}[Proof of Proposition \ref{prop:partialsymmetry}] If $\Omega$ is an annulus and $p>1$, the result follows from Theorems \ref{th:sym}, \ref{nr:tm:intro}, and Lemmas \ref{lemma:DpLambdap}, \ref{lemma:D_p=1/Lambda}.  The other cases ($\Omega$ being a ball and $p\in(0,2^*-1)$ or $\Omega$ being an annulus and $p\in(0,1]$) were previously known and can be found in \cite{ST17} (for $p\neq 1$) and \cite{GiraoWeth} (for $p=1$).
\end{proof}

\begin{proof}[Proof of Theorem~\ref{th:NeumannLENS:radial}] 
We only argue the case $N\geq 3$, where $2^*<\infty$. The cases  $N=1,2$ follow with similar arguments. The existence of least-energy radial solutions for $p\in \cI_{rad}\backslash\{1\}$ follows from Theorem \ref{thm:mono} if $\Omega$ is an annulus and from \cite[Theorem 1.2]{ST17} if $\Omega$ is a ball (or one can also argue analogously as in Theorem \ref{thm:mono}). \textbf{Part 5} is a consequence of the Pohozaev inequality and the fact that every radial solution of \eqref{eq:LENSNeumann} is sign-changing (and therefore necessarily implies the existence of a Dirichlel radial solution in a radial subdomain). The convergence results in \textbf{Parts 1,2,3,} and \textbf{4} follow from analogous arguments as those used in Theorem \ref{th:NeumannLENS} using functional spaces with radially symmetric functions.  We omit the details.
\end{proof}

\begin{proof}[Proof of Proposition \ref{new:prop}]
 This follows from Theorem \ref{thm:mono}.
\end{proof}

\section{Numerical approximation of radial solutions}\label{num:sec}

The next figure exemplifies the different shapes of monotone radial solutions in annuli for $p\geq 0$. 
\begin{center}
\includegraphics[width=0.3\textwidth]{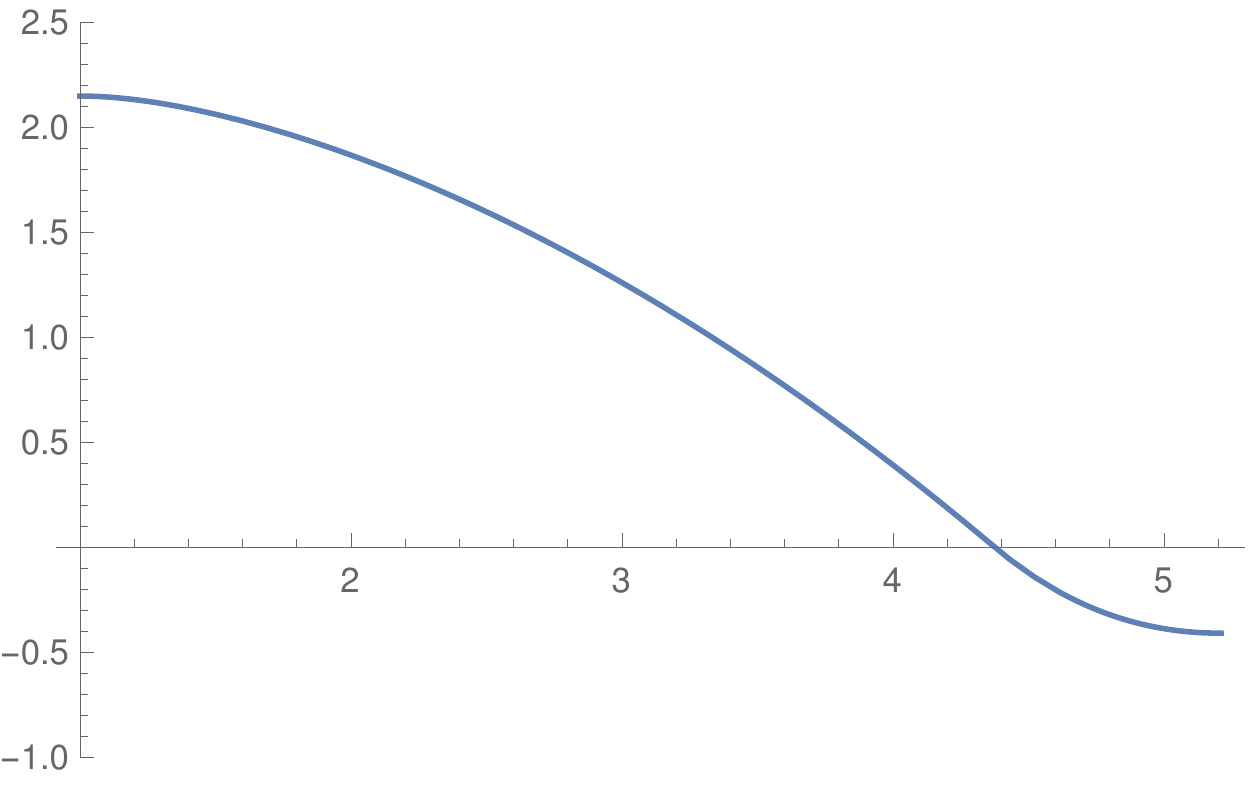}\ 
\includegraphics[width=0.3\textwidth]{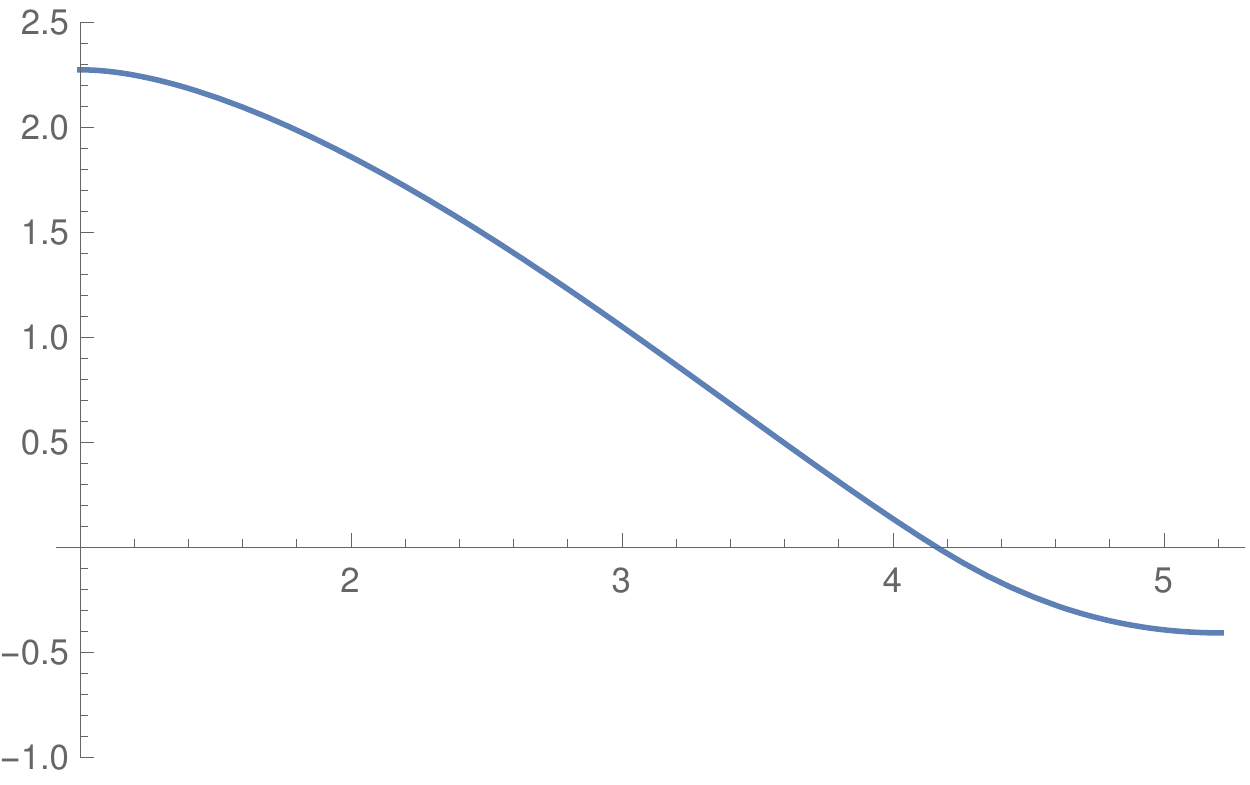}\ 
\includegraphics[width=0.3\textwidth]{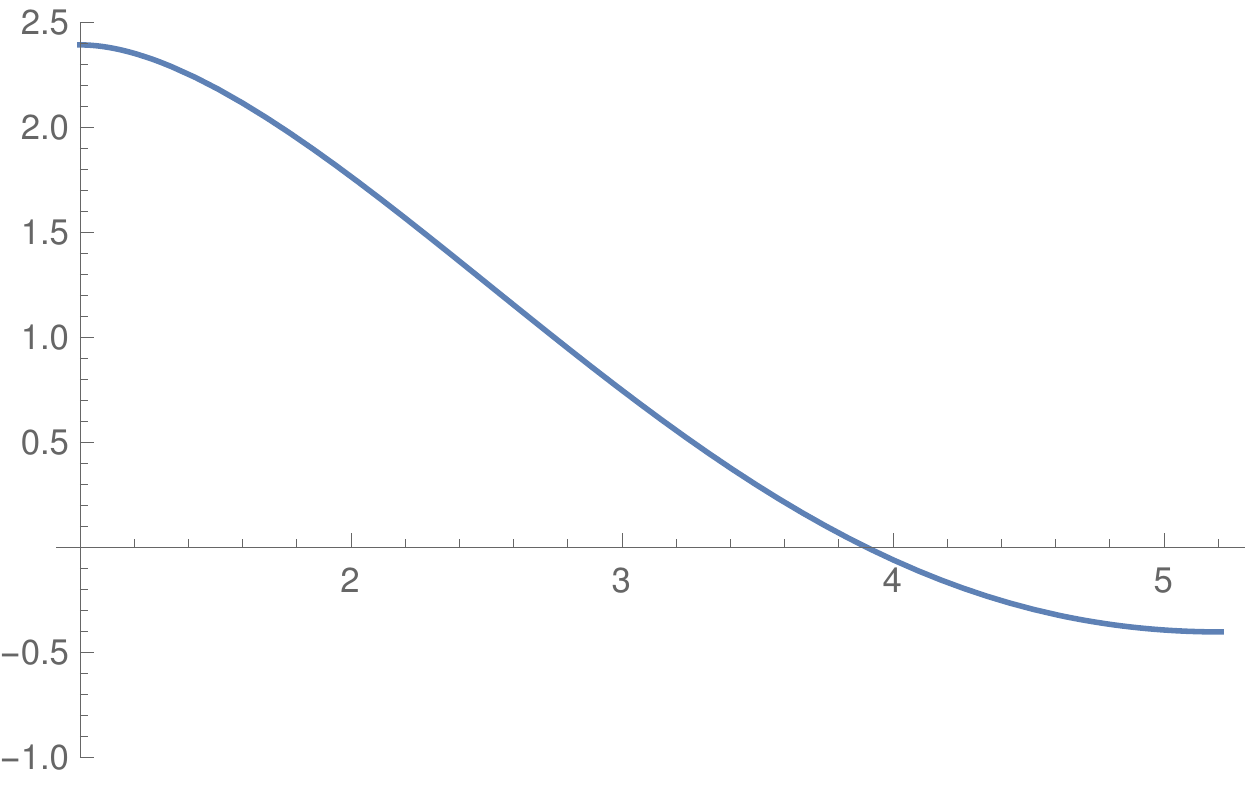}\ 
\includegraphics[width=0.3\textwidth]{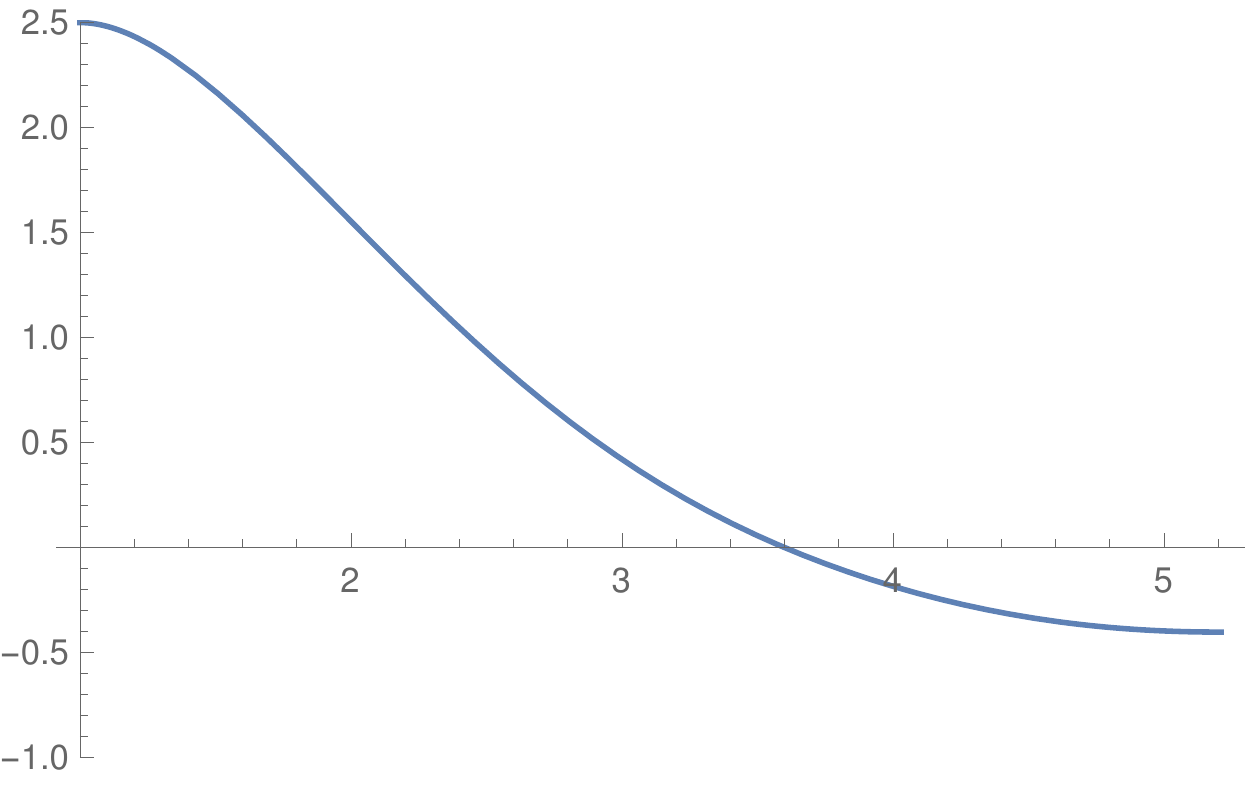}\ 
\includegraphics[width=0.3\textwidth]{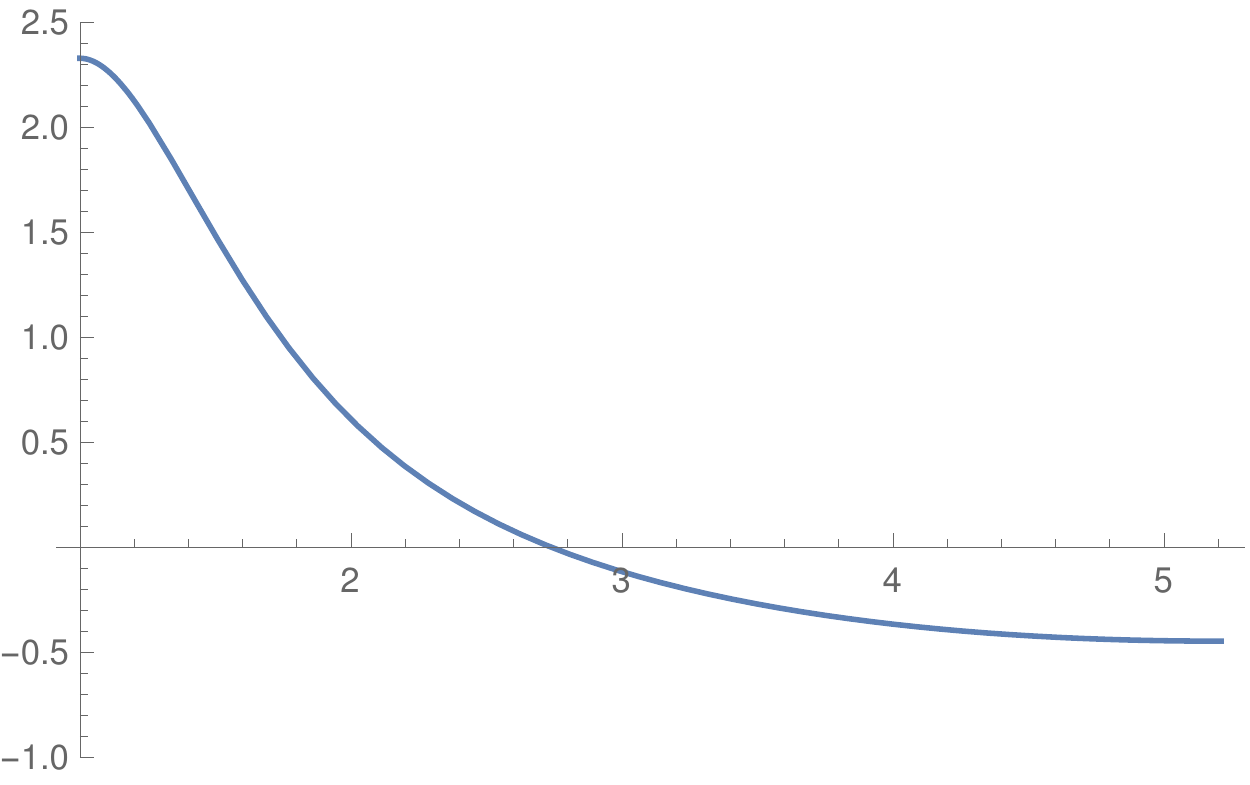}\ 
\includegraphics[width=0.3\textwidth]{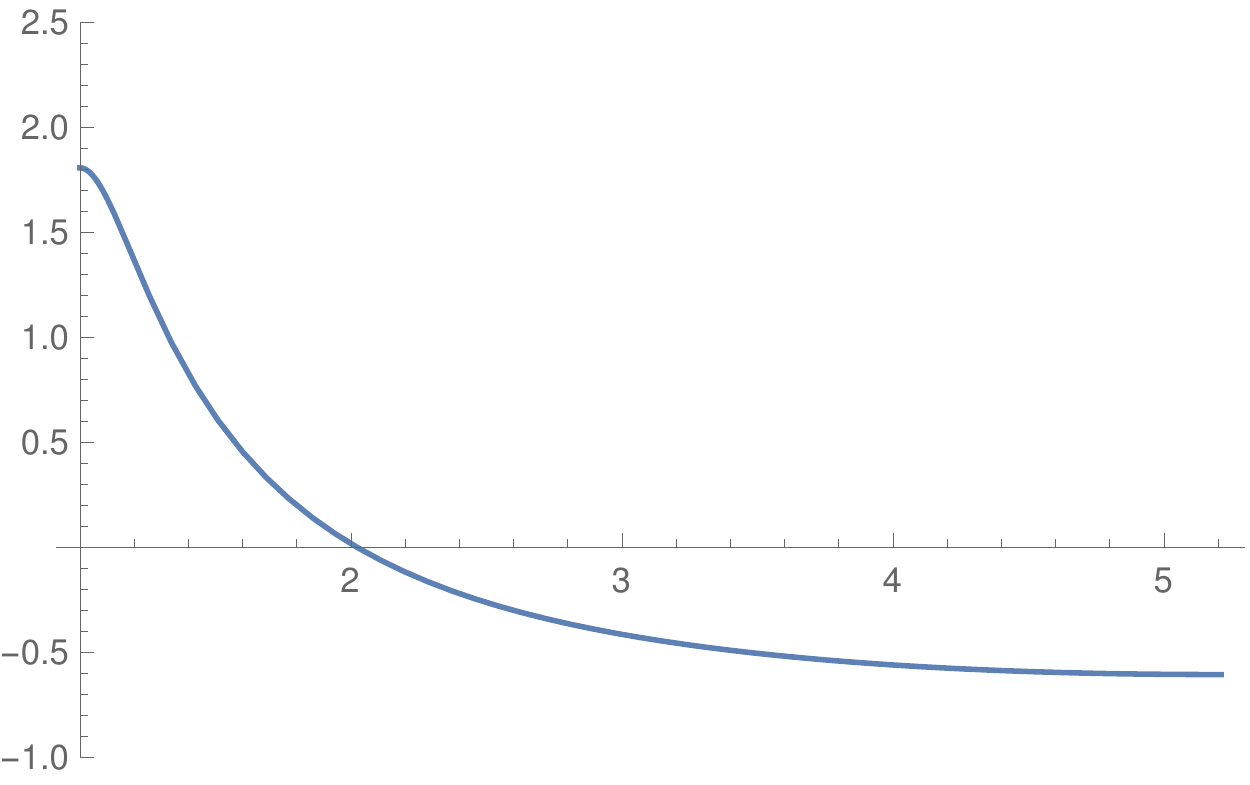}\ 
\includegraphics[width=0.3\textwidth]{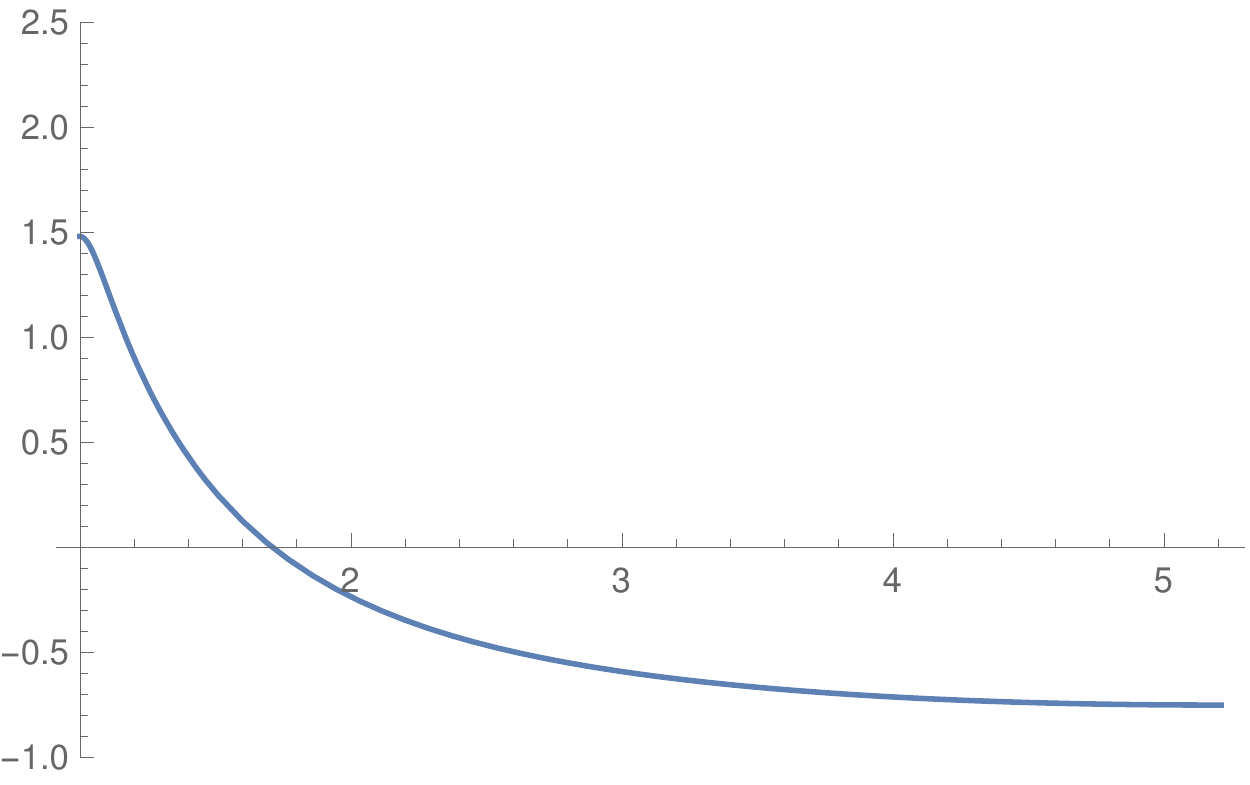}\ 
\includegraphics[width=0.3\textwidth]{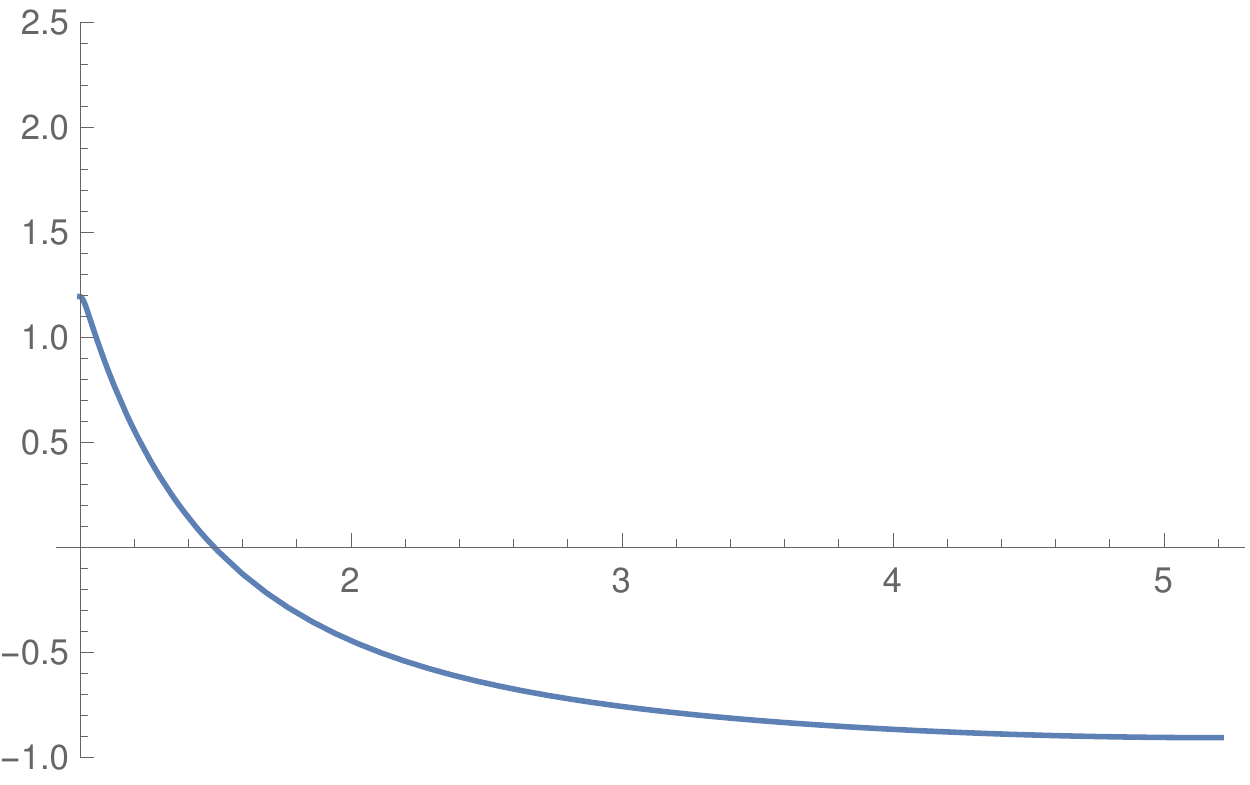}\ 
\includegraphics[width=0.3\textwidth]{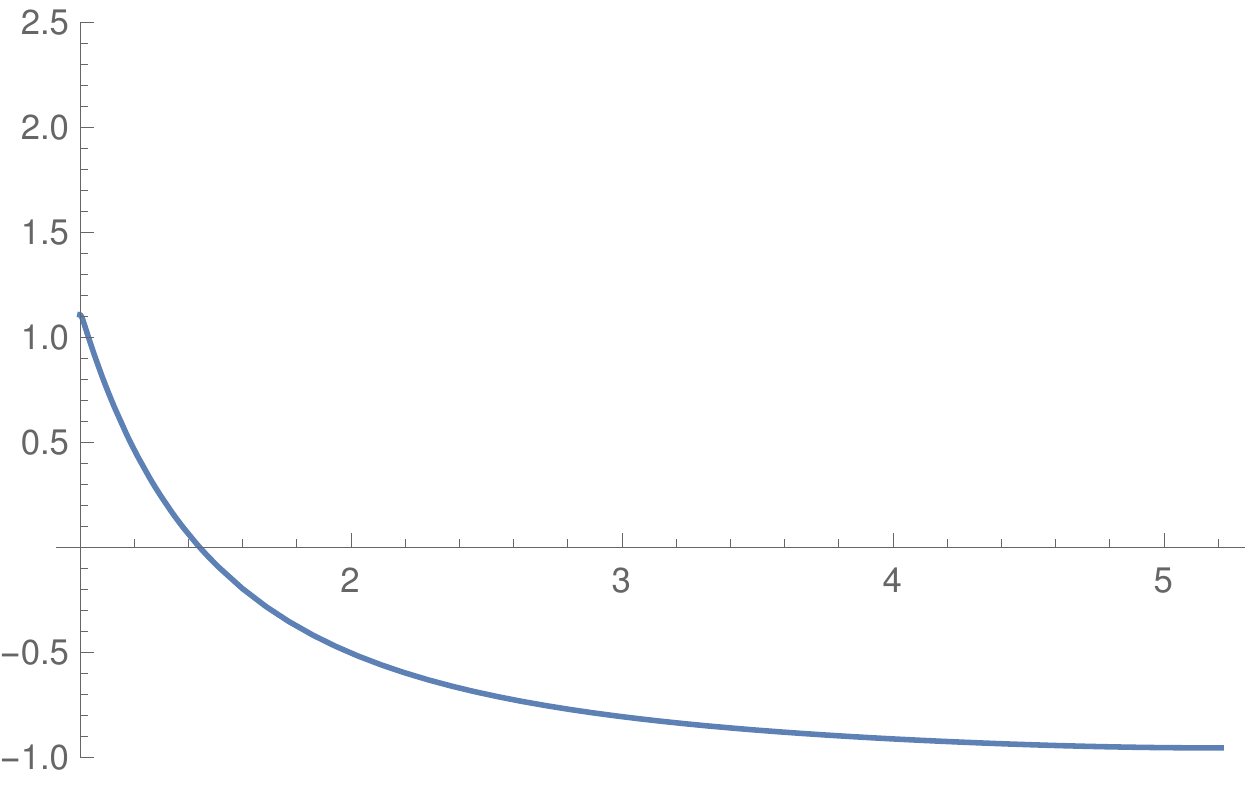}\ 
\captionof{figure}{From left to right and top to bottom: a numerical approximation of the radially symmetric solution of \eqref{eq:LENSNeumann} in  the annulus $A=\{x\in\R^4\::\: 1<|x|<5.21021\}$ for $p = 0,\ 0.5,\ 1,\ 1.5,\ 3,\ 6,\ 11,\ 31,\ 61.$  At $p=1$, the solution $u_1$ given by \eqref{u1rad} is depicted. The annulus $A$ is selected in such a way that $\mu_{1,rad}(A)\approx 1.$} \label{fig1}
 \end{center}

\medbreak

\noindent  \textbf{{Acknowledgments.}} H. Tavares was partially supported by the Portuguese government through FCT-Funda\c c\~ao para a Ci\^encia e a Tecnologia, I.P., under the projects UID/MAT/04459/2020 and PTDC/MAT-PUR/28686/2017.

\end{document}